\pgfplotsset{compat=1.6}
\pgfplotsset{every axis title/.append style={at={(0.6,1.1)}}}
\definecolor{brightlavender}{rgb}{0.75, 0.58, 0.89}
\definecolor{amethyst}{rgb}{0.6, 0.4, 0.8} 	
\definecolor{blue-violet}{rgb}{0.54, 0.17, 0.89} 	
\definecolor{burgundy}{rgb}{0.5, 0.0, 0.13} 	
\definecolor{burntorange}{rgb}{0.93, 0.53, 0.18} 	
\definecolor{earthyellow}{rgb}{0.88, 0.66, 0.37}
\definecolor{applegreen}{rgb}{0.55, 0.71, 0.0}	
\definecolor{antiquefuchsia}{rgb}{0.57, 0.36, 0.51}
\definecolor{ultramarine}{rgb}{0.07, 0.04, 0.56}
\numberwithin{equation}{section}
\newtheorem{theorem}{Theorem}[section]
\newtheorem{proposition}[theorem]{Proposition}
\newtheorem{lemma}[theorem]{Lemma}
\newtheorem{definition}[theorem]{Definition}
\newtheorem{notation}[theorem]{Notation}
\begin{document}

\title[Cycle Double Cover]{The cycle double cover conjecture from the perspective of percolation theory on iterated line graphs}

\author[J. W. Fischer]{\textbf{\quad {Jens Walter} Fischer}}
\address{{\bf {Jens Walter} FISCHER}\\ Switzerland.} \email{jefischer@posteo.de}

\begin{abstract}
	The cycle double cover conjecture is a long standing problem in graph theory, which links local properties, the valency of a vertex and no bridges, and a global property of the graph, being covered by a particular set of cycles. We prove the conjecture using a lift of walks and cycles in $G$ to sets of open and closed edges on $\mathcal{L}(\mathcal{L}(G))$, the line graph of the line graph of $G$. We exploit that triangles are preserved by the line graph operator to obtain a one-to-one mapping from walks in the underlying graph $G$ to walks on $\mathcal{L}(\mathcal{L}(G))$. We prove that each set of "double walk covers" in $G$ induces a certain set of $\lbrace 0,1\rbrace$ labels on a subgraph covering of $\mathcal{L}(\mathcal{L}(G))$, minus a set of triangles, and conversely, that there is such a set of labels such that its projection back to $G$ implies a double cycle cover, if $G$ is an simple bridgeless triangle-free cubic graph. The techniques applied are inspired by percolation theory, flipping the $\lbrace 0,1\rbrace$ labels to obtain the desired structure.
\end{abstract}
\bigskip

\maketitle

\textit{ Key words : Cycle Double Cover Conjecture, Line Graph, Combinatorics}  
\bigskip

\textit{ MSC 2020 : 05C10, 57M15}

\section{Introduction \& main statements}
	We consider in what follows finite simple connected graphs $G=(V,E)$, with a particular focus on the edge set $E$. Line graphs have been around for quite some time, Whitney proved already in 1932 in \cite{Whi32} that (almost) any graph can be reconstructed from its line graph. Additionally, a graph can be identified to be the line graph of some other graph in linear time, see \cite{Rous73} and \cite{Lehot74}. The direct extension to iterated line graphs, meaning the line graph of the line graph or the line graph of the line graph of the line graph and so on, has also been focus of research, for example in the context of Hamiltonian line graphs, see \cite{Char68}, or more recent on their spectra in the context of regular graphs, see \cite{RAM05}. The inverse direction, to conclude from properties of an iterated line graph structural properties of the underlying graph $G$, seems to be rarer in occurrence. We want to go this path to tackle an open problem, which requires a global view on the adjacency structure of $G$.\par  
	Our problem at hand is the Cycle Double Cover Conjecture, a long standing problem in graph theory dating back to Szekeres in 1973, see \cite{Sze73}, and Seymour in 1980, see \cite{Sey80}. It is closely linked to the genus of a graph and can be formulated for bridgeless cubic graphs, covering the remaining graph classes as well due to edge-contraction arguments, which preserve the cycle double cover, which is defined as follows.
	\begin{definition}\label{def:cycle_double_cover}
		Let $G=(V,E)$ be a simple connected graph and a set of cycles in $G$ denoted by $\mathcal{C}$ such that for all $e\in E$, there are distinct $\gamma_1,\gamma_2\in\mathcal{C}$ with $e\in\gamma_1$ and $e\in\gamma_2$. Then, the set $\mathcal{C}$ is called a cycle double cover of $G$.
	\end{definition}
	The graph has to be bridgeless, since otherwise the bridge has to be traversed twice by the same cycle preventing a cycle double cover in the sense of Definition \ref{def:cycle_double_cover}. Additionally, we can assume that the underlying graph is triangle free, since expansions of a vertex of valency $3$ into a triangle also preserves the Cycle Double Cover. Consequently, we can formulate it as the following theorem.
	\begin{theorem}\label{thm:cycle_double_cover}
		Let $G=(V,E)$ be a connected simple bridgeless triangle-free cubic graph. Then, there is a cycle double cover such that no edge is covered twice by the same cycle.
	\end{theorem}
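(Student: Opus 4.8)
The plan is to recast the global covering condition of Definition~\ref{def:cycle_double_cover} as a local labeling problem on the twice-iterated line graph $\mathcal{L}(\mathcal{L}(G))$, to solve that problem by a percolation-style flipping argument, and then to project the solution back to $G$. First I would fix the structure induced by $G$ being cubic and triangle-free. Since each vertex of $G$ carries its three incident edges, $\mathcal{L}(G)$ is $4$-regular and contains exactly one triangle per vertex of $G$; triangle-freeness of $G$ is precisely what rules out any other triangle in $\mathcal{L}(G)$. Because the line-graph operator sends triangles to triangles, these vertex-triangles survive as triangles in the $6$-regular graph $\mathcal{L}(\mathcal{L}(G))$. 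The vertices of $\mathcal{L}(\mathcal{L}(G))$ are pairs of incident edges of $G$, i.e.\ \emph{transitions} through a vertex, and I would establish the one-to-one correspondence between non-backtracking walks in $G$ and walks in $\mathcal{L}(\mathcal{L}(G))$: a walk in $G$ is exactly a sequence of transitions, and the three transitions at a given cubic vertex are the three vertices of its triangle.

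The next step is the forward direction. At a cubic vertex the requirement ``each edge twice, no cycle repeats an edge'' forces a rigid local pattern: the six half-edges must be paired into transitions using no backtrack, and writing $a,b,c$ for the multiplicities of the three transition types one is forced to $a=b=c=1$, so the \emph{only} admissible pairing uses all three transitions of the vertex-triangle exactly once. Hence any cycle double cover of $G$ lifts to a family of closed walks in $\mathcal{L}(\mathcal{L}(G))$ that, after deletion of the vertex-triangles, meet every vertex-triangle in all three of its transitions; I would encode this as a $\{0,1\}$ labeling of the covering subgraph of $\mathcal{L}(\mathcal{L}(G))$ minus the triangles, with open edges marking the routed cycles. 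Making the notion of \emph{double walk cover} precise and proving that it induces such a labeling is combinatorial, and I expect it to be routine once the transition correspondence is in place.

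The heart of the argument, and where I expect the genuine obstacle, is the converse existence statement: that some $\{0,1\}$ labeling of $\mathcal{L}(\mathcal{L}(G))$ projects to a valid cover. Here I would proceed in the spirit of percolation. Every bridgeless cubic graph admits a $2$-factor (the complement of a perfect matching, which exists by Petersen's theorem), giving an easily available but incomplete labeling that satisfies the degree constraints but not yet the rigid local pattern everywhere. I would then repeatedly flip $\{0,1\}$ labels on the edges around offending vertex-triangles so as to drive the configuration toward the admissible pattern of the forward direction. The essential lemma to isolate is that each such flip preserves the parity and degree conditions guaranteeing that open edges still project to closed walks, while strictly decreasing a suitable defect potential; bridgelessness must be invoked to exclude the frozen configuration in which a single edge is forced to be traversed twice by one walk, which is the only local obstruction to reaching a defect-free state.

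Showing that this flipping process terminates in a globally valid labeling, rather than stalling or cycling, is the crux: it is exactly the point that carries the full difficulty of the conjecture, and I would expect the bulk of the technical work, together with any counting or entropy input borrowed from percolation, to be concentrated there. Once a valid labeling is obtained, I would project the open edges back through the two line-graph maps, read off the induced cycles in $G$, and verify from the rigid triangle pattern that each edge is covered exactly twice with no cycle repeating an edge, yielding the cycle double cover asserted in Theorem~\ref{thm:cycle_double_cover}.
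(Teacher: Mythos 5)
Your architecture coincides with the paper's: pass to the twice-iterated line graph, delete the inherited triangles, place $\{0,1\}$ labels whose open edges decompose into disjoint cycles, flip labels percolation-style until the configuration projects to a cycle double cover, and invoke bridgelessness to exclude the frozen obstruction. Your forward direction (the $a=b=c=1$ count of transition multiplicities at a cubic vertex) is exactly the paper's notion of a valid labeling/coloring, and your observation that a bridge forces an unresolvable defect is precisely Lemma \ref{lem:bridge_implies_typeA_label}. One detail of your setup is superfluous: the Petersen/$2$-factor initialization is not needed, since in the paper's framework each reduced clique independently admits two valid label patterns, so valid labelings exist trivially and the starting labeling is arbitrary.

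The genuine gap is that you leave the crux --- termination of the flipping dynamics --- as a declared black box, and you are missing the structural idea on which the paper's entire Section \ref{sec:proof_prop} rests. The paper classifies self-intersections into two topologically distinct kinds: Type B, which a single label inversion converts into a pair of joined cycles (Lemma \ref{lem:intersections_joining_cycles_after_flips}), and Type A, which is invariant under its own inversion and can only be resolved globally, by rerouting a connecting cycle between the two arcs of the self-intersecting cycle. Your single ``defect potential'' conflates these two kinds; a local flip at an offending clique can convert one kind of defect into the other or create new ones, so no monotonicity statement can even be formulated without the classification. Resolving Type A defects in the paper requires lifting minimal vertex cuts of $\mathcal{L}(G)$ centered at the defect (Definition \ref{def:centered_vertex_cut} and Appendix \ref{app:centered_vertex_cut}), an exhaustive case analysis over cut sizes $2$, $3$, $4$ and their generic cycle configurations, and then a separate argument (Subsection \ref{subsub:monotonicity_type_a_deletion}, using the cycle-adjacency graph $\mathfrak{G}_{\Lambda}$ of Lemma \ref{lem:graph_of_cycles} and shortest-path choices of joining cliques) that the chosen inversions never create new Type A defects, which is what makes the defect count strictly decrease. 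None of this machinery is derivable from what you wrote --- you flag it yourself as the point carrying the full difficulty --- so your proposal is a correct skeleton of the same strategy, but without an argument for Proposition \ref{prop:self_ints_everywhere_imply_bridge} and its monotonicity it does not constitute a proof of the theorem.
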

	On a high level, we can outline our proof as follows. For a given connected simple bridgeless triangle-free cubic graph $G=(V,E)$ we consider an arbitrary set of walks $\mathcal{W}$ in $G$ such that for each edge $e\in E$ it is traversed twice by the walks in $\mathcal{W}$, either by two distinct walks or twice by the same walk. We aim at showing that there is a set of transformations, which transform $\mathcal{W}$ into a set of cycles $\mathcal{C}$, while preserving the traversal property of the edges in $G$. We tackle this by considering edge-cuts, which allow "separation" of walks with repeated edges into two or more cycles. The main obstacle is a consistent view on the edge relations and edge induced subgraph structure. Hence, we lift the problem to context of line graphs, which represent the graph theoretic structure with focus on the edges in $G$. 
	\par
	We need the following central definition for the discussion. 
	\begin{definition}	 
		Let $G=(V,E)$ be a connected simple graph. Then, the line graph $L$ of $G$ is the graph defined by $L=(E,\mathcal{E})$ with $\langle e,\bar{e}\rangle \in \mathcal{E}$ if and only if $e=\langle v,w \rangle,\bar{e}=\langle w,u \rangle$ with $v,w,u\in V, v\neq w\neq u$. 
	\end{definition}
	Geometrically, the line graph lifts the neighborhood relationship of the edges in $G$ into a new graph which potentially disentangles edge related properties of the underlying graph $G$.
	\begin{notation}
		For any connected simple undirected graph $G$ with line graph $L$, we write as the line graph operator $\mathcal{L}(G):=L$.
	\end{notation}
	Furthermore, we are going to work extensively with deletions of edge sets of subgraphs which we define as follows.
	\begin{definition}
		For any connected simple undirected graph $G=(V,E)$ and a subgraph $H=(V',E')$ with $V'\subseteq V$ and $E'\subseteq E$, we define the difference of $G$ and $H$ to be the graph $G\setminus H := (V,E\setminus E')$. 
	\end{definition}	
	In particular, triangles will play an essential role, due to being fixed subgraph structures under the line graph operator as explained in Lemma \ref{lem:line_graph_op_tri_to_tri} 
	\begin{lemma}\label{lem:line_graph_op_tri_to_tri}
		Let $\mathcal{T}:=\lbrace T_i\rbrace$ the set of all triangles in $G$. Then, the set $\mathcal{T}_{\mathcal{L}(G)}=\{\mathcal{L}(T_i)\rbrace$ contains exclusively subgraphs of $\mathcal{L}(G)$ which are triangles. 
	\end{lemma}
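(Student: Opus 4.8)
The plan is to compute $\mathcal{L}(T_i)$ directly for an arbitrary triangle $T_i$ and to verify that the resulting graph is both a triangle and genuinely a subgraph of $\mathcal{L}(G)$. Fix a triangle $T_i$ on three pairwise distinct vertices $a,b,c\in V$, so that its edge set is $\{e_1,e_2,e_3\}$ with $e_1=\langle a,b\rangle$, $e_2=\langle b,c\rangle$ and $e_3=\langle c,a\rangle$. By the definition of the line graph operator, the vertex set of $\mathcal{L}(T_i)$ is exactly $\{e_1,e_2,e_3\}$, and since $G$ is simple these are three distinct elements of $E$, hence three distinct vertices of $\mathcal{L}(G)$.

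Next I would check the three adjacencies by applying the defining condition that $\langle e,\bar e\rangle\in\mathcal{E}$ if and only if $e=\langle v,w\rangle$ and $\bar e=\langle w,u\rangle$ with $v\neq w\neq u$. The pair $e_1,e_2$ shares the common endpoint $b$ (with $a\neq b\neq c$), the pair $e_2,e_3$ shares $c$, and the pair $e_3,e_1$ shares $a$; thus all three pairs are adjacent. Consequently $\mathcal{L}(T_i)$ consists of three pairwise adjacent vertices, i.e. it is isomorphic to $K_3$ and is therefore a triangle.

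Finally I would confirm that this triangle is genuinely a subgraph of $\mathcal{L}(G)$ and not merely an artefact of restricting attention to $T_i$. Here lies the only point requiring care: one must check that each adjacency established inside $T_i$ persists in $\mathcal{L}(G)$. This holds because the line graph adjacency relation is local, depending only on two edges sharing an endpoint in $G$; the shared vertices $a,b,c$ that witness the three adjacencies within $\mathcal{L}(T_i)$ witness precisely the same adjacencies in $\mathcal{L}(G)$. Since $\{e_1,e_2,e_3\}\subseteq E$ are vertices of $\mathcal{L}(G)$ and the three connecting edges lie in $\mathcal{E}$, the graph $\mathcal{L}(T_i)$ is a subgraph of $\mathcal{L}(G)$.

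I do not expect a substantive obstacle, as this is essentially the elementary identity $\mathcal{L}(K_3)=K_3$ together with the observation that line graphs of subgraphs embed into the line graph of the ambient graph. The simplicity of $G$ forces $a,b,c$ and the three edges to be distinct, so no degenerate configuration can collapse the triangle, and the verification remains routine. Carrying this out for each $T_i\in\mathcal{T}$ then yields the claim for the whole family $\mathcal{T}_{\mathcal{L}(G)}$.
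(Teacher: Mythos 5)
Your proof is correct: the direct computation that $\mathcal{L}(T_i)\cong K_3$, together with the observation that line-graph adjacency is witnessed locally by a shared endpoint and hence persists in $\mathcal{L}(G)$, establishes exactly the claim. The paper itself states this lemma without proof (treating it as the standard fact that $\mathcal{L}$ maps triangles to triangles), so your argument is precisely the routine verification the paper leaves implicit, with the distinctness of $e_1,e_2,e_3$ from simplicity of $G$ correctly noted.
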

	In short, the operator $\mathcal{L}$ maps triangles to triangles. This is however not a one-to-one correspondence because vertices of valency $3$ also introduce a triangle in the line graph. We discuss in Section \ref{sec:construction_sec_order_line_graph_red} that removing a specific set of triangles from $\mathcal{L}(\mathcal{L}(G))$ allows to disentangle cycle structures in $G$. To this end, we fix the following central object of interest. 
	\begin{definition}
		Define the graph $\mathfrak{L}_2(G)$ associated to $G$ as 
		\begin{equation}
				\mathfrak{L}_2(G):=\mathcal{L}(\mathcal{L}(G))\setminus \mathcal{T}_{\mathcal{L}(\mathcal{L}(G))}.
		\end{equation}
		We call it the reduced order two line graph.
	\end{definition}
	We are now going to establish structural results on $\mathfrak{L}_2(G)$\footnote{It turns out that there is an approach to the problem on the level of $G$ and the set of walks $\mathcal{W}$, using a construction based on half-edges. We discuss this in Appendix \ref{app:half_edge_alternative} but the resulting structure is equivalent to $\mathfrak{L}_2(G)$ and the proof method, therefore, the same.}, which we will use extensively in the proof of Theorem \ref{thm:cycle_double_cover}. The proof of Theorem \ref{thm:cycle_double_cover} follows in Section \ref{sec:proof_main_thm} after the presentation of all results on the necessary construction discussed in Sections \ref{sec:construction_sec_order_line_graph_red}  and \ref{sec:proof_prop}. In what follows, let $G=(V,E)$ be a connected simple undirected bridgeless triangle-free cubic graph. 
\section{Structure of reduced order two line graph}\label{sec:construction_sec_order_line_graph_red}
	Recall that we defined the reduced order two line graph as $\mathfrak{L}_2(G)$ with
	\begin{equation*}
		\mathfrak{L}_2(G):=\mathcal{L}(\mathcal{L}(G))\setminus \mathcal{T}_{\mathcal{L}(\mathcal{L}(G))}.
	\end{equation*}
	Our particular interest lies in the remainders of $K_4$ cliques in $\mathcal{L}(\mathcal{L}(G))$ after deletion of $\mathcal{T}_{\mathcal{L}(\mathcal{L}(G))}$. They will play a central role, allowing together with labels on the remaining edges to obtain a "separation of cycles" in $\mathfrak{L}_2(G)$. The set of labels can then be projected to $G$ to obtain a double cycle cover proving Theorem \ref{thm:cycle_double_cover}. We develop the concept in the following subsections.
	\subsection{Reduced order two line graph and reduced cliques}
	In this subsection, we will analyze the structure of $\mathfrak{L}_2(G)$ and, in particular, the role of $K_4$ cliques in $\mathcal{L}(\mathcal{L}(G))$ and their induced subgraphs after deletion of $\mathcal{T}_{\mathcal{L}(\mathcal{L}(G))}$.
	\begin{proposition}\label{prop:reduced_double_line_graph_4_reg_con}
		The graph $\mathfrak{L}_2(G)$ is $4$-regular and connected.
	\end{proposition}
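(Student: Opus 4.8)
The plan is to track degrees and edge sets through the two line-graph operations and then to account precisely for what the deletion of $\mathcal{T}_{\mathcal{L}(\mathcal{L}(G))}$ removes at each vertex. For the regularity claim, I would first record that since $G$ is cubic, $\mathcal{L}(G)$ is $4$-regular (each edge $\langle u,v\rangle$ is adjacent to the two other edges at $u$ and the two other edges at $v$), and hence $\mathcal{L}(\mathcal{L}(G))$ is $6$-regular, using the general fact that the line graph of a $k$-regular graph is $(2k-2)$-regular. It then remains to show that exactly two incident edges are deleted at every vertex. The key structural observation is that because $G$ is triangle-free, the only triangles in $\mathcal{L}(G)$ are the ``vertex triangles'' formed by the three edges incident to a common vertex $v\in V$; there are $|V|$ of them and they are pairwise vertex-disjoint. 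Applying $\mathcal{L}$ together with Lemma \ref{lem:line_graph_op_tri_to_tri}, the triangles in $\mathcal{T}_{\mathcal{L}(\mathcal{L}(G))}$ are again pairwise vertex-disjoint and, crucially, they \emph{partition} the vertex set of $\mathcal{L}(\mathcal{L}(G))$: every vertex of $\mathcal{L}(\mathcal{L}(G))$ is a pair $\{e,f\}$ of $G$-edges meeting at a unique vertex $v$, so it lies in exactly one such triangle. Each vertex therefore loses precisely its two triangle-edges, giving degree $6-2=4$.

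For connectedness, I would describe explicitly what survives inside each $K_4$ clique. Each $K_4$ of $\mathcal{L}(\mathcal{L}(G))$ corresponds to an edge $f=\langle v,w\rangle$ of $G$ and has as its four vertices the two pairs $\{f,\cdot\}$ living at $v$ and the two pairs $\{f,\cdot\}$ living at $w$. Of its six edges, exactly the two joining equal-vertex pairs are deleted (they lie in the triangles at $v$ and at $w$), while the four ``cross'' edges survive; hence the reduced clique is a $4$-cycle, which is connected. Since every edge of $\mathfrak{L}_2(G)$ lies in exactly one such reduced clique, $\mathfrak{L}_2(G)$ is the union of these $4$-cycles, and two reduced cliques (for $G$-edges $e$ and $f$) share a vertex precisely when $e,f$ meet in $G$, i.e.\ exactly when $e,f$ are adjacent in $\mathcal{L}(G)$. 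The intersection pattern of the reduced cliques is thus isomorphic to $\mathcal{L}(G)$, which is connected because $G$ is. Given two vertices $x,y$ of $\mathfrak{L}_2(G)$, I would pick reduced cliques containing them, join the corresponding $\mathcal{L}(G)$-vertices by a path, and concatenate walks through the successive $4$-cycles (each time routing to the vertex shared with the next clique) to produce an $x$--$y$ walk.

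The main obstacle, and the only place where triangle-freeness is genuinely used, is the bookkeeping that exactly two edges are deleted at each vertex and that no surviving cross-edge accidentally belongs to a deleted triangle. This reduces to the two claims that the vertex triangles are the \emph{only} triangles of $\mathcal{L}(G)$ and that their $\mathcal{L}$-images are vertex-disjoint and cover $V(\mathcal{L}(\mathcal{L}(G)))$; a stray triangle in $G$ would create additional, non-vertex triangles in $\mathcal{L}(G)$ whose images overlap the vertex triangles and would destroy this clean partition. I would prove these claims by the ``lives-at-a-vertex'' assignment: a vertex $\{e,f\}$ of $\mathcal{L}(\mathcal{L}(G))$ is sent to the unique $G$-vertex incident to both $e$ and $f$, and one checks that a deleted edge joins two pairs assigned to the \emph{same} $G$-vertex whereas a surviving edge joins pairs assigned to \emph{distinct} (adjacent) $G$-vertices. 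Once this dichotomy is in place, both the regularity and the union-of-$4$-cycles description follow immediately, and the edge count $9|V|-3|V|=6|V|=\tfrac{1}{2}\cdot 4\cdot 3|V|$ provides a consistency check on the $4$-regularity.
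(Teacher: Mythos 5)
Your proof is correct and takes essentially the same route as the paper's: $4$-regularity follows from $\mathcal{L}(\mathcal{L}(G))$ being $6$-regular with each vertex lying in exactly one triangle of $\mathcal{T}_{\mathcal{L}(\mathcal{L}(G))}$, and connectedness follows from the decomposition of $\mathcal{L}(\mathcal{L}(G))$ into $K_4$ cliques whose reduced forms stay connected and whose composition is connected. Your write-up is somewhat more careful than the paper's at two points it leaves implicit --- the use of triangle-freeness to show the deleted triangles partition the vertex set, and the identification of the reduced cliques as $4$-cycles whose intersection pattern is isomorphic to the connected graph $\mathcal{L}(G)$ --- but the underlying argument is the same.
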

	\begin{proof}
		Note first, that $\mathcal{L}(\mathcal{L}(G))$ is $6$-regular and any vertex in $\mathcal{L}(\mathcal{L}(G))$ belongs to exactly one triangle induced by a triangle in $\mathcal{L}(G)$, i.e., which belongs to $ \mathcal{T}_{\mathcal{L}(\mathcal{L}(G))}$. Consequently, by removing $\mathcal{T}_{\mathcal{L}(\mathcal{L}(G))}$, the valency is reduced by $2$ for any vertex such that $\mathfrak{L}_2(G)$ is $4$-regular. \par
		Furthermore, the graph $\mathcal{L}(\mathcal{L}(G))$ can be seen as a composition of $K_4$ where each vertex belongs to exactly $2$ cliques $K_4$ and any neighbor of a vertex belongs to one of the associated cliques. Let $T^{(2)}$ a triangle in $\mathcal{T}_{\mathcal{L}(\mathcal{L}(G))}$ and $v\in T^{(2)}$. Denote by $\lbrace v_1^{(a)},v_2^{(a)},v_3^{(a)},v\rbrace$ and $\lbrace v_1^{(b)},v_2^{(b)},v_3^{(b)},v\rbrace$ the vertex sets of the two cliques associated to $v$. Additionally, remark that since $v$ belongs to $T^{(2)}$ so does one further vertex of each clique. Without loss of generality, we assume that $T^{(2)}=\lbrace v,v_1^{(a)},v_1^{(b)}\rbrace$. Removing the edges of $T^{(2)}$ from $\mathcal{L}(\mathcal{L}(G))$ removes, consequently, two edges from each clique $K_4$ such that their reduced form stays connected and so does, finally, their composition.  
	\end{proof}
	The central part of the further construction are the subgraphs of $\mathfrak{L}_2(G)$ which arise from the deletion of edges in cliques $K_4$ in $\mathcal{L}(\mathcal{L}(G))$ of the induced triangles from $\mathcal{L}(G)$. In this sense, we call them simply reduced cliques.
	\begin{notation}
		We denote by $\mathcal{X}:=\lbrace \mathbb{X}_i\rbrace_{i=1}^{|E|}$ the reduced cliques $K_4$ constructed in the proof of Proposition \ref{prop:reduced_double_line_graph_4_reg_con} and write $(\mathfrak{L}_2(G),\mathcal{X})$ for the full graph-subgraph structure.  
		\begin{figure}[H]
			\centering
			\includegraphics[scale=1.5]{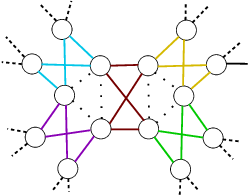}
			\caption{Local representation of $\mathfrak{L}_2(G)$ using colors to represent the reduced cliques $\mathbb{X}$. Dashed lines represent edges leading to the remaining parts of the graph. Dotted lines represent the removed triangles.}	
			\label{fig:local_representation_X}
		\end{figure}
	\end{notation}
	The name of the reduced cliques is motivated by their form in a local representation as is illustrated in Figure \ref{fig:local_representation_X}. They form an edge-disjoint cover of $\mathfrak{L}_2(G)$ , i.e., $\mathbb{X},\mathbb{Y}\in\mathcal{X}$ do not share any edges and the union over all $\mathbb{X}$ gives all of $\mathfrak{L}_2(G)$. Moreover, by construction, any two $\mathbb{X},\mathbb{Y}\in\mathcal{X}$ have at most one vertex in common. In what follows, we assign labels in $\lbrace 0,1\rbrace$ to all edges in $\mathfrak{L}_2(G)$ and by canonical extension to any $\mathbb{X}\in\mathcal{X}$, where $0$ means closed and $1$ means open, in accordance with classical notations in percolation theory. 
	\begin{definition}\label{def:open_closed_edges_in_reduced_double_line}
		Consider the graph-subgraph structure $(\mathfrak{L}_2(G),\mathcal{X})$ together with a set of labels 
		\begin{equation}
			\Lambda:=\left\lbrace  \lbrace 0,1\rbrace^{\lbrace e\in \mathbb{X}\rbrace}\,\big|\,\mathbb{X}\in\mathcal{X}\right\rbrace.
		\end{equation}
		We denote by $\lambda_e\in\lbrace 0,1\rbrace$ the label of the edge $e\in \mathbb{X}$.\par				
		We say that $\Lambda$ is a valid labeling if for all $\mathbb{X}\in\mathcal{X}$ we have for all $v\in\mathbb{X}$ two vertices $w,u\in\mathbb{X}\setminus\lbrace  v\rbrace$ with $\langle v,w\rangle,\langle v,u\rangle\in \mathbb{X}$ such that
		\begin{equation}
			\lambda_{\langle v,w\rangle} = 1-\lambda_{\langle v,u\rangle}.
		\end{equation}
		If $\Lambda$ is a valid labeling, we write $(\mathfrak{L}_2(G),\mathcal{X},\Lambda)$ for the labeled graph-subgraph structure.
	\end{definition} 
	Note that there are effectively two choices of edge-labels for any reduced clique $\mathbb{X}$ to obtain a valid labeling $\Lambda$. A vertex $v\in (\mathfrak{L}_2(G),\mathcal{X},\Lambda)$ is incident to exactly two edges $e_1,e_2$ with $\lambda_{e_1}=\lambda_{e_2}=1$ and edges $e_3,e_4$ with $\lambda_{e_3}=\lambda_{e_4}=0$ in $(\mathfrak{L}_2(G),\mathcal{X},\Lambda)$ since $v$ belongs to exactly two reduced cliques.
	\begin{lemma}\label{lem:open_edges_disjoint_union_of_cycles}
		Given $(\mathfrak{L}_2(G),\mathcal{X},\Lambda)$ the edge-induced subgraph of $\mathfrak{L}_2(G)$ on the set of open edges
		\begin{equation}
			\mathfrak{E}_2^{(open)}:=\lbrace e\in \mathfrak{L}_2(G)\,|\, \lambda_e=1\rbrace		
		\end{equation}				
		is a disjoint union of cycles.
	\end{lemma}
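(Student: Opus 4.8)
The plan is to reduce the statement to the single numerical fact, already recorded in the paragraph preceding the lemma, that in $(\mathfrak{L}_2(G),\mathcal{X},\Lambda)$ every vertex is incident to exactly two open edges. Once this is in hand, the edge-induced subgraph on $\mathfrak{E}_2^{(open)}$ is $2$-regular, and a finite $2$-regular simple graph is, by the elementary classification, a disjoint union of cycles. So essentially all the content sits in establishing the local edge count at each vertex.

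First I would make the local picture explicit. By the construction in the proof of Proposition \ref{prop:reduced_double_line_graph_4_reg_con}, each reduced clique $\mathbb{X}\in\mathcal{X}$ is a $K_4$ from which two non-adjacent edges have been deleted, i.e.\ a $4$-cycle (cf.\ Figure \ref{fig:local_representation_X}); in particular $\mathbb{X}$ is $2$-regular, so every vertex $v\in\mathbb{X}$ meets exactly two edges of $\mathbb{X}$. This is precisely what the validity condition of Definition \ref{def:open_closed_edges_in_reduced_double_line} presupposes, and it forces these two edges to carry opposite labels; hence $v$ is incident to exactly one open and one closed edge inside $\mathbb{X}$. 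Since, by Proposition \ref{prop:reduced_double_line_graph_4_reg_con} together with the edge-disjoint covering property of $\mathcal{X}$, each vertex of $\mathfrak{L}_2(G)$ belongs to exactly two reduced cliques and these account for all four of its incident edges, summing the contributions of the two cliques at $v$ yields exactly two open edges and two closed edges at $v$.

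Next I would translate this into a degree statement for the subgraph in question. The vertex set of the edge-induced subgraph on $\mathfrak{E}_2^{(open)}$ is all of $V(\mathfrak{L}_2(G))$, because no vertex is isolated (each carries two open edges), and every such vertex has degree exactly $2$ in that subgraph. As $\mathfrak{L}_2(G)$ is a simple graph, the subgraph contains neither loops nor multiple edges, so each of its connected components has every vertex of degree $2$ and all its cycles have length at least $3$.

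Finally I would invoke the classification of $2$-regular graphs directly: starting at any vertex and leaving at each step along the not-yet-traversed open edge, the degree-$2$ constraint makes the walk forced, and finiteness forces it to close up into a cycle visiting no vertex twice; deleting this cycle leaves a subgraph that is again everywhere of degree $2$, and an induction on the number of open edges writes $\mathfrak{E}_2^{(open)}$ as a disjoint union of cycles. I do not anticipate a genuine obstacle in this argument; the only point requiring care is the first step, namely the clean accounting — via the $4$-cycle shape of each reduced clique and the two-clique membership of each vertex — that validity delivers exactly two open edges per vertex, after which $2$-regularity is the entire substance of the claim.
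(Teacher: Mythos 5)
Your proposal is correct and follows essentially the same route as the paper: the paper's proof likewise observes that every vertex has degree $2$ in the open-edge subgraph under a valid labeling and concludes that each component is a cycle. You merely make explicit the bookkeeping the paper relegates to the paragraph before the lemma (each reduced clique is a $4$-cycle contributing one open and one closed edge per vertex, and each vertex lies in exactly two reduced cliques), which is a sound and slightly more self-contained rendering of the same argument.
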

	\begin{proof}
		Any vertex in the edge-induced subgraph has degree $2$ independently of the choice of labels as long as $\Lambda$ is valid. Consequently, any connected component of the edge-induced subgraph is a simple graph with vertices of valency $2$. Therefore, any connected component is a cycle. Disjointness follows as well.  
	\end{proof}
	The edge-induced subgraph of $\mathfrak{L}_2(G)$ on $\mathfrak{E}_2^{(open)}$ will play a central role. Its constituent cycles will give for a specific valid labeling $\Lambda$ after projection to $G$ a double cycle cover. In Figure \ref{fig:local_representation_X_edge_label_induced_subgraph} a local representation of the situation is illustrated. 
	\begin{figure}[H]
		\centering
		\includegraphics[scale=1.5]{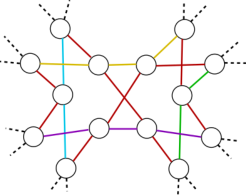}
		\caption{Local representation of $(\mathfrak{L}_2(G),\mathcal{X},\Lambda)$ using colors to represent the closed edges and the open edges inducing cycles. Closed edges are dark red and open edges are the remaining colors. Open edges of identical color belong to the same cycle.}	
		\label{fig:local_representation_X_edge_label_induced_subgraph}
	\end{figure}
	In fact, for an underlying graph $G=(V,E)$, working combinatorically through all valid labels of the reduced cliques is unfeasible since there are $2^{|E|}$ possibilities. It turns out that considering $(\mathfrak{L}_2(G),\mathcal{X},\Lambda)$ and, in particular, a valid labeling $\Lambda$, it is more insightful to look globally at the cycles induced by $\mathfrak{E}_2^{(open)}$ on $\mathfrak{L}_2(G)$.
		\begin{notation}
			Given $(\mathfrak{L}_2(G),\mathcal{X},\Lambda)$, we denote by $\Gamma_{\Lambda}$ the set of disjoint cycles resulting from Lemma \ref{lem:open_edges_disjoint_union_of_cycles}. A cycle will be denoted by $\gamma\in\Gamma_{\Lambda}$.
		\end{notation}
		Given $(\mathfrak{L}_2(G),\mathcal{X},\Lambda)$ we can make a link to the line graph $\mathcal{L}(G)$ as follows. First, embedd canonically $\Gamma_{\Lambda}$ into $\mathcal{L}(\mathcal{L}(G))$, then assign via projection to any edge in $\mathcal{L}(G)$ a color and two edges in $\mathcal{L}(G)$ have the same color if and only if the corresponding vertices in $\mathfrak{L}_2(G)$ belong to the same $\gamma\in\Gamma_{\Lambda}$. To make the map well-defined, we have to restrict the colorings of trails in $\mathcal{L}(G)$ since not all give a valid labeling of $\mathfrak{L}_2(G)$. 
		\begin{definition}\label{def:valid_colorings_line_graph}
			Let $\mathcal{T}$ be a set of closed trails in $\mathcal{L}(G)$ such that all vertices of $\mathcal{L}(G)$ are contained in some trail $\tau$ in $\mathcal{T}$. Denote by $\Delta=\lbrace \Delta_1,\Delta_2,\Delta_3\rbrace$ a triangle in $\mathcal{L}(G)$, which is induced by a single vertex in $G$ (see \cite{Har72}). Denote the remaining two neighbors of $\Delta_1$ by $v_1$ and $v_2$. We say $\tau$ induces a valid edge-labeling if for all trails $\phi\in\tau$ and $\Delta_1\in \phi$ as well as $\langle \Delta_1,\Delta_2\rangle,\langle \Delta_1,\Delta_3\rangle\in \phi$ implies $\langle \Delta_1,v_1\rangle,\langle \Delta_1,v_2\rangle\in \phi$.
		\end{definition}
		We will from now on only consider valid colorings in the sense of Definition \ref{def:valid_colorings_line_graph}.
		\begin{notation}\label{not:chromatic_from_double_to_line_graph}
			We denote by $\chi_{\mathcal{E}}$ the projection from $\Gamma_{\Lambda}\subset \mathcal{L}(\mathcal{L}(G))$ to an edge coloring of $\mathcal{L}(G)$ and the lift from a valid edge-labeling of $\mathcal{L}(G)$ to some $\Gamma_{\Lambda'}$ by $\chi_{\mathcal{E}}^{-1}$.
		\end{notation}	
		Indeed, the projection $\chi_{\mathcal{E}}$ has an important structural property.
		\begin{lemma}
			Given $(\mathfrak{L}_2(G),\mathcal{X},\Lambda)$, the projection $\chi_{\mathcal{E}}(\Gamma_{\Lambda})$ induces a valid edge-labeling on $\mathcal{L}(G)$ and can, in particular, be represented as a union of close trails covering $\mathcal{L}(G)$.
		\end{lemma}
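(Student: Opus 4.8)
The plan is to read the closed trails off the cycles $\Gamma_{\Lambda}$ directly and then to verify the local implication of Definition \ref{def:valid_colorings_line_graph} one reduced clique at a time. Recall first that the vertices of $\mathfrak{L}_2(G)$ are exactly those of $\mathcal{L}(\mathcal{L}(G))$, i.e. the edges of $\mathcal{L}(G)$, since deleting $\mathcal{T}_{\mathcal{L}(\mathcal{L}(G))}$ removes only edges. By Lemma \ref{lem:open_edges_disjoint_union_of_cycles} and the observation that every vertex is incident to exactly two open edges, the cycles of $\Gamma_{\Lambda}$ cover all vertices of $\mathfrak{L}_2(G)$, so $\chi_{\mathcal{E}}$ assigns a colour to every edge of $\mathcal{L}(G)$, the colour class of $\gamma$ being its vertex set. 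I would then show each $\gamma$ projects to a closed trail: writing $\gamma$ as a cyclic sequence $\epsilon_1,\dots,\epsilon_k$ of edges of $\mathcal{L}(G)$ in which consecutive $\epsilon_i,\epsilon_{i+1}$ are joined by an open edge, note that an open edge lies in a reduced clique $\mathbb{X}$ which, by the construction in the proof of Proposition \ref{prop:reduced_double_line_graph_4_reg_con}, sits at a single vertex $w$ of $\mathcal{L}(G)$; hence $\epsilon_i,\epsilon_{i+1}$ both meet $w$ and $\epsilon_1,\dots,\epsilon_k$ is a closed walk in $\mathcal{L}(G)$. Since a valid labeling opens exactly one of the two clique-edges at each vertex, the two open edges at $\epsilon_i$ lie in its two distinct cliques, i.e. at its two endpoints, so the walk enters and leaves $\epsilon_i$ through different vertices and genuinely traverses it; as the $\epsilon_i$ are distinct, no edge repeats and the walk is a closed trail. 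Taking the union over $\Gamma_{\Lambda}$ then covers every edge, hence every vertex, of $\mathcal{L}(G)$.

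For the validity condition I would analyse a single triangle. Let $\Delta=\lbrace\Delta_1,\Delta_2,\Delta_3\rbrace$ be a triangle of $\mathcal{L}(G)$ induced by a vertex $a$ of $G$, let $b$ be the other endpoint of the $G$-edge $\Delta_1$, and let $v_1,v_2$ be the two remaining edges at $b$, which are the neighbours of $\Delta_1$ in $\mathcal{L}(G)$ outside $\Delta$. The four edges of $\mathcal{L}(G)$ incident to $\Delta_1$, namely $A:=\langle\Delta_1,\Delta_2\rangle$, $B:=\langle\Delta_1,\Delta_3\rangle$, $C:=\langle\Delta_1,v_1\rangle$ and $D:=\langle\Delta_1,v_2\rangle$, form one $K_4$ of $\mathcal{L}(\mathcal{L}(G))$. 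By Lemma \ref{lem:line_graph_op_tri_to_tri} the image $\mathcal{L}(\Delta)$ is a deleted triangle containing $A$ and $B$, so the edge $AB$ is removed; likewise the triangle at $b$ yields a deleted triangle containing $C$ and $D$, so $CD$ is removed. Hence the reduced clique at $\Delta_1$ is the $4$-cycle $A-C-B-D-A$, in which the two triangle-edges $A,B$ are opposite and the two non-triangle-edges $C,D$ are opposite.

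I would then conclude from the labeling. A valid $\Lambda$ opens exactly two opposite edges of this $4$-cycle, so the open edges are either $\lbrace AC,BD\rbrace$ or $\lbrace AD,BC\rbrace$, never the removed diagonal pairing $\lbrace AB, CD\rbrace$; thus every passage of a trail through $\Delta_1$ matches a triangle-edge with a non-triangle-edge. In either case $A$ is matched with one of $C,D$ and $B$ with the other, so whenever a single trail $\phi$ contains both $A$ and $B$ it must contain both $C$ and $D$, which is exactly the implication $\langle\Delta_1,\Delta_2\rangle,\langle\Delta_1,\Delta_3\rangle\in\phi \Rightarrow \langle\Delta_1,v_1\rangle,\langle\Delta_1,v_2\rangle\in\phi$ required by Definition \ref{def:valid_colorings_line_graph}. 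Since the argument is local and every triangle vertex is treated identically, $\chi_{\mathcal{E}}(\Gamma_{\Lambda})$ induces a valid edge-labeling.

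I expect the main obstacle to be the clique-level bookkeeping in the second step: one must pin down that, after removing $\mathcal{T}_{\mathcal{L}(\mathcal{L}(G))}$, the reduced clique at $\Delta_1$ is precisely the $4$-cycle with the two triangle-edges forming a non-adjacent (diagonal) pair, since it is exactly the deletion of the diagonal $AB$ that forbids the pairing $\lbrace A,B\rbrace$ and drives the entire argument. Triangle-freeness of $G$ enters here: it guarantees that every triangle of $\mathcal{L}(G)$, and hence every deleted triangle of $\mathcal{L}(\mathcal{L}(G))$, arises from a single vertex of $G$, so that $\lbrace A,B\rbrace$ and $\lbrace C,D\rbrace$ are the only deleted pairs and no spurious triangle alters the reduced clique.
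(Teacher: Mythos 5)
Your proposal is correct and shares the paper's skeleton: colour every edge of $\mathcal{L}(G)$ via the one-to-one correspondence between vertices of $\mathfrak{L}_2(G)$ and edges of $\mathcal{L}(G)$, get closedness from the cycles of $\Gamma_{\Lambda}$, and get validity from the two triangle-edges deleted inside each $K_4$. Where you genuinely diverge is in how the last two steps are argued, and your version is the tighter one. The paper dismisses the trail property in one sentence and argues validity globally: a projected trail covering all sides of a triangle of $\mathcal{L}(G)$ would force some $\gamma\in\Gamma_{\Lambda}$ to contain a triangle in the triangle-free graph $\mathfrak{L}_2(G)$. That statement is not literally the implication of Definition \ref{def:valid_colorings_line_graph} --- a cycle can pass through all three vertices $\langle\Delta_1,\Delta_2\rangle$, $\langle\Delta_1,\Delta_3\rangle$, $\langle\Delta_2,\Delta_3\rangle$ without using any deleted edge, so ``covering all sides'' is neither quite what must be excluded nor quite what the triangle-freeness appeal excludes. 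You instead pin down the local structure: the reduced clique at $\Delta_1$ is the $4$-cycle $A\text{--}C\text{--}B\text{--}D$ with the two triangle-edges opposite, a valid labeling opens one of the two opposite matchings, hence every passage through $\Delta_1$ pairs a triangle-edge with a non-triangle-edge, which is exactly the implication $\langle\Delta_1,\Delta_2\rangle,\langle\Delta_1,\Delta_3\rangle\in\phi\Rightarrow\langle\Delta_1,v_1\rangle,\langle\Delta_1,v_2\rangle\in\phi$ demanded by the definition --- the deleted diagonal $\langle A,B\rangle$ being the direct local reason the bad pairing cannot occur, rather than a global triangle-freeness argument. Likewise, your observation that the two open edges at a vertex of $\mathfrak{L}_2(G)$ lie in the cliques at its two distinct endpoints makes the projected walk a genuine trail (each edge entered and left at different endpoints, no edge repeated), where the paper only asserts ``the trails are closed since they arise from projected cycles.'' In short: same mechanism, but your local matching analysis verifies Definition \ref{def:valid_colorings_line_graph} on the nose, and it buys a complete proof of the trail property that the paper leaves implicit.
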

		\begin{proof}
		Since $\Gamma_{\Lambda}$ consists of a disjoint union of cycles, covering all vertices of $\mathcal{L}(\mathcal{L}(G))$ and each vertex in $\mathcal{L}(\mathcal{L}(G))$ has a one-to-one correspondence to an edge in $\mathcal{L}(G)$, all that remains to be shown is that the projection gives indeed closed trails and they avoid covering all sides of a triangle in $\mathcal{L}(G)$. But the projected cycles cannot cover all sides of a triangle in $\mathcal{L}(G)$ because then a $\gamma\in\Gamma_{\Lambda}$ would already contain a triangle in $(\mathfrak{L}_2(G),\mathcal{X},\Lambda)$, the latter being triangle free. The trails are closed since they arise from projected cycles in $\Gamma_{\Lambda}$.
		\end{proof}
	Each valid labeling can now be projected to $G$ as shown in Figure \ref{fig:projection_diagram_walks}, where $\pi$ assigns to any edge in $G$ one or two colors, corresponding to the colors of the edges incident to the corresponding vertex in $\mathcal{L}(G)$.
	\begin{figure}[H]
		\centering
		\begin{tikzcd}
			G  \arrow[r, "\mathcal{L}"] & \mathcal{L}(G) \arrow[r, "\mathcal{L}"] & \mathcal{L}(\mathcal{L}(G)) \arrow[d] \\ 
			\mathcal{W}_{G} \arrow[u, "\text{cover}"]  & \mathcal{T}_{\mathcal{L}(G)} \arrow[u, "\text{cover}"] \arrow[l, "\pi"] & (\mathfrak{L}_2(G),\mathcal{X},\Lambda) \arrow[l, "\chi_{\mathcal{E}}"]\\
		\end{tikzcd}
		\caption{The diagram illustrates first the construction of $(\mathfrak{L}_2(G),\mathcal{X},\Lambda)$. Then, subsequently, it shows the projection via $\chi_{\mathcal{E}}$ to a spanning set of closed trails $\mathcal{T}_{\mathcal{L}(G)}$ in $\mathcal{L}(G)$ and, then, via $\pi$ to a set of closed walks $\mathcal{W}_{G}$ in $G$ such that the union over all walks traverses every edge exactly twice.}
		\label{fig:projection_diagram_walks}
	\end{figure}
	We fix one important property in the following observation, which follows directly from the construction as mentioned in Figure \ref{fig:projection_diagram_walks}.
	\begin{lemma}\label{lem:line_graph_trail_proj}
		Given $(\mathfrak{L}_2(G),\mathcal{X},\Lambda)$ and the set $\mathcal{T}_{\mathcal{L}(G)}:=\chi_{\mathcal{E}}(\Gamma_{\Lambda})$, being a valid edge-labeling of $\mathcal{L}(G)$, its projection $\mathcal{W}_{G}:=\pi(\mathcal{T}_{\mathcal{L}(G)})$ to $G$ is a set of closed walks such that the union over all walks traverses every edge exactly twice.
	\end{lemma}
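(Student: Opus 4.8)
The plan is to read the statement off the two canonical bijections relating the vertices and edges of $G$, $\mathcal{L}(G)$ and $\mathcal{L}(\mathcal{L}(G))$, combined with the closed-trail structure already established for $\mathcal{T}_{\mathcal{L}(G)}=\chi_{\mathcal{E}}(\Gamma_{\Lambda})$. Recall that the vertices of $\mathcal{L}(G)$ are in bijection with the edges of $G$, while the vertices of $\mathcal{L}(\mathcal{L}(G))$ are in bijection with the edges of $\mathcal{L}(G)$, i.e.\ with adjacent pairs of $G$-edges; consequently every edge of $\mathcal{L}(G)$ carries a well-defined \emph{center}, the common vertex in $G$ of its two incident $G$-edges. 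First I would record that, since $\Gamma_{\Lambda}$ is a disjoint union of cycles covering every vertex of $\mathcal{L}(\mathcal{L}(G))$ by Lemma \ref{lem:open_edges_disjoint_union_of_cycles}, the color classes of $\chi_{\mathcal{E}}$ partition the edge set of $\mathcal{L}(G)$; by the preceding lemma each class is a closed trail, so $\mathcal{T}_{\mathcal{L}(G)}$ is in fact an edge-disjoint decomposition of $\mathcal{L}(G)$ into closed trails.

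Next I would show that $\pi$ sends each such closed trail to a closed walk in $G$. Writing a closed trail as a cyclic vertex sequence $x_1,x_2,\dots,x_k,x_1$ in $\mathcal{L}(G)$ — equivalently, as a cyclic sequence of $G$-edges in which consecutive entries are adjacent in $G$ — its image under $\pi$ is precisely the edge sequence $x_1,\dots,x_k$, and this is a closed walk in $G$ because consecutive $G$-edges meet at the center of the intervening trail-edge and the sequence closes up. The delicate point is to upgrade this formal edge sequence to a genuinely traversable walk: one must rule out that some $x_j$ is entered and left through the same endpoint, which in $\mathcal{L}(G)$ means the trail uses two sides of a vertex-induced triangle at $x_j$. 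This is exactly the configuration governed by the valid-coloring condition of Definition \ref{def:valid_colorings_line_graph}; validity forces such triangle turns to occur in compensating pairs at the two endpoints of $x_j$, so the four line-graph edges incident to $x_j$ can be consistently repaired into two through-passages of the $G$-edge $x_j$, yielding honest closed walks.

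Finally I would establish the multiplicity-two statement by a degree count. Since $G$ is cubic, $\mathcal{L}(G)$ is $4$-regular (as already used implicitly in the proof of Proposition \ref{prop:reduced_double_line_graph_4_reg_con}, where $\mathcal{L}(\mathcal{L}(G))$ comes out $6$-regular). In any decomposition of a graph into closed trails each vertex is passed exactly $\deg/2$ times, so every vertex of $\mathcal{L}(G)$ is visited exactly twice by the trails of $\mathcal{T}_{\mathcal{L}(G)}$. Under the bijection between vertices of $\mathcal{L}(G)$ and edges of $G$, each visit is one traversal by $\mathcal{W}_{G}=\pi(\mathcal{T}_{\mathcal{L}(G)})$ of the corresponding edge of $G$; hence every edge of $G$ is traversed exactly twice, the two traversals lying either on one walk (when both visits belong to a single trail, giving the edge one color) or on two distinct walks (two colors), matching the ``one or two colors'' description of $\pi$.

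I expect the main obstacle to be the well-definedness in the second step, namely verifying that the formal edge sequence produced by $\pi$ is a bona fide closed walk rather than one that turns back inside a triangle of $\mathcal{L}(G)$. This is precisely what Definition \ref{def:valid_colorings_line_graph} is engineered to control, so the crux is a careful local case analysis at each vertex-induced triangle, matching the two admissible edge-pairings at a vertex of $\mathcal{L}(G)$ against the valid-coloring constraint. By contrast, the bijections and the degree count are routine.
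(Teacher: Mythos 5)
Your proposal is correct and takes essentially the same route as the paper: the paper's proof is a terse local ``proof by drawing'' around a vertex of $\mathcal{L}(G)$ (each vertex receives two, not necessarily distinct, colors from its incident edges, and each color contributes one traversal of the corresponding edge of $G$), and your argument is precisely that local analysis made rigorous, with the closed-trail decomposition, the $4$-regularity count of two passages per vertex, and the re-pairing of triangle turn-backs via Definition \ref{def:valid_colorings_line_graph} spelled out where the paper leaves them implicit in Figure \ref{fig:projection_line_graph_colors}. In particular, you correctly identify the well-definedness of the projected walks at vertex-induced triangles as the crux that the validity condition is designed to handle, which is exactly the role it plays in the paper's construction.
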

	\begin{proof}
		We use a proof by drawing to show that every edge is traversed twice. See Figure \ref{fig:projection_line_graph_colors} which depicts the colored line graph locally around some vertex $v$ in $\mathcal{L}(G)$. The coloring of the edges incident to $v$ give canonically two, not necessarily unique, colors of the corresponding edge in $G$. Each color gives rise to a walk in $G$ and, hence, the resulting family of walks traverses each edge exactly twice. 
		\begin{figure}[H]
			\centering
			\includegraphics{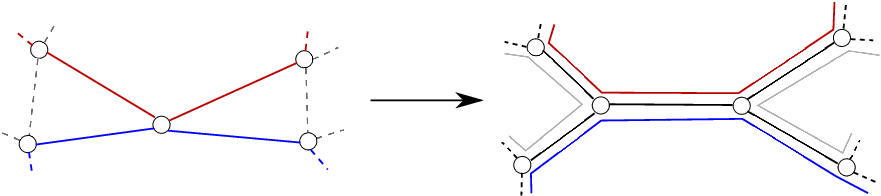}
			\caption{Projection from valid edge-labeling of $\mathcal{L}(G)$ on the left to double coloring of edge of $G$. The arrow indicates the mapping through $\pi$.}	
			\label{fig:projection_line_graph_colors}
		\end{figure}
	\end{proof}
	From the proof of Lemma \ref{lem:line_graph_trail_proj} and, in particular, Figure \ref{fig:projection_line_graph_colors} it is clear, that we need necessarily two unique colors in the neighborhood of each vertex in $\mathcal{L}(G)$, to obtain a cycle double cover of $G$. The corresponding labeling $\Lambda$ is exactly given by the labels which are projected under $\pi\circ\chi_{\mathcal{E}}$ to a set of cycles in $G$. They have a specific structure which we define in the next Subsection \ref{sec:typeA_B_intersections} and finally show in Section \ref{sec:proof_prop} their existence in the case of bridgeless cubic graphs.
	\subsection{Type A and Type B self-intersections}\label{sec:typeA_B_intersections}
		In this subsection, we develop on the structure of the cycles in $\Gamma_{\Lambda}$, given $(\mathfrak{L}_2(G),\mathcal{X},\Lambda)$. For this, we need a few notations and definitions. They should render the construction more accessible since they lift everything to the space of induced cycles. We are going to establish a structural understanding by defining adjacent cycles in $(\mathfrak{L}_2(G),\mathcal{X},\Lambda)$ before coming back to the projection from  $(\mathfrak{L}_2(G),\mathcal{X},\Lambda)$ to $\mathcal{L}(G)$.
		\begin{definition}\label{def:adjacent_cycles}
			Consider $(\mathfrak{L}_2(G),\mathcal{X},\Lambda)$ and let $\gamma_1,\gamma_2\in\Gamma_{\Lambda}$ cycles. If there is a reduced clique $\mathbb{X}\in\mathcal{X}$ and edges $e_1\in\gamma_1$, $e_2\in\gamma_2$ such that $e_1,e_2\in \mathbb{X}$, then we call $\gamma_1,\gamma_2$ adjacent in $(\mathfrak{L}_2(G),\mathcal{X},\Lambda)$. Additionally, we say that $\mathbb{X}$ joins $\gamma_1$ and $\gamma_2$.
		\end{definition}
		Note that two cycles can be joined by multiple $\mathbb{X}\in\mathcal{X}$ but remains a binary property. The overarching structure can, hence, be captured as follows.
		\begin{lemma}\label{lem:graph_of_cycles}
			Given $(\mathfrak{L}_2(G),\mathcal{X},\Lambda)$, the graph $\mathfrak{G}_{\Lambda}:=(\Gamma_{\Lambda},\mathfrak{E}_{\Lambda})$ where $\langle \gamma,\phi\rangle\in \mathfrak{E}_{\Lambda}$ if and only if $\gamma,\phi\in\Gamma_{\Lambda}$ and $\gamma$ and $\phi$ are adjacent $(\mathfrak{L}_2(G),\mathcal{X},\Lambda)$, is a simple connected graph with loops.
		\end{lemma}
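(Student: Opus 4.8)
The plan is to reduce every assertion to the already-established connectedness of $\mathfrak{L}_2(G)$ from Proposition \ref{prop:reduced_double_line_graph_4_reg_con} and to transport it to the quotient object $\mathfrak{G}_{\Lambda}$. Simplicity and the presence of loops are essentially definitional: by Definition \ref{def:adjacent_cycles} adjacency of two cycles is a binary (yes/no) relation, so between two distinct $\gamma,\phi\in\Gamma_{\Lambda}$ there is at most one edge in $\mathfrak{E}_{\Lambda}$ regardless of how many reduced cliques join them; when the two open edges of a single reduced clique lie on the \emph{same} cycle, that same relation produces a loop, which is precisely why loops must be admitted. The one substantive claim is therefore connectedness.

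For connectedness I would introduce the quotient map $q\colon V(\mathfrak{L}_2(G))\to\Gamma_{\Lambda}$ sending each vertex to the unique cycle of $\Gamma_{\Lambda}$ through it. This is well defined and surjective: by the remark following Definition \ref{def:open_closed_edges_in_reduced_double_line} every vertex carries exactly two open edges, so by Lemma \ref{lem:open_edges_disjoint_union_of_cycles} it lies on exactly one cycle, and the vertex sets of the cycles partition $V(\mathfrak{L}_2(G))$. The heart of the argument is the local compatibility statement: \emph{if $x$ and $y$ are adjacent in $\mathfrak{L}_2(G)$, then either $q(x)=q(y)$ or $q(x)$ and $q(y)$ are adjacent in $\mathfrak{G}_{\Lambda}$.}

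To prove this local statement, let $e=\langle x,y\rangle$ and let $\mathbb{X}$ be the unique reduced clique containing $e$, using that the reduced cliques form an edge-disjoint cover of $\mathfrak{L}_2(G)$. If $\lambda_e=1$ then $e$ is an open edge, so $x$ and $y$ lie on a common cycle and $q(x)=q(y)$. If $\lambda_e=0$, I invoke validity of $\Lambda$ from Definition \ref{def:open_closed_edges_in_reduced_double_line}: at $x$ there is a second edge $\langle x,w\rangle\in\mathbb{X}$ with $\lambda_{\langle x,w\rangle}=1$, and likewise an open edge $\langle y,u\rangle\in\mathbb{X}$ at $y$. These open edges belong to $q(x)$ and $q(y)$ respectively and both lie in $\mathbb{X}$, so $\mathbb{X}$ joins $q(x)$ and $q(y)$ in the sense of Definition \ref{def:adjacent_cycles}; hence $q(x)$ and $q(y)$ are adjacent. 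Concretely this is the picture in which a reduced clique is a $K_4$ with a perfect matching deleted, i.e.\ a $4$-cycle carrying two opposite open edges and two opposite closed edges.

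Finally I would chain the local statement along paths. Given $\gamma,\gamma'\in\Gamma_{\Lambda}$, pick vertices $x\in\gamma$ and $x'\in\gamma'$; by Proposition \ref{prop:reduced_double_line_graph_4_reg_con} there is a path $x=z_0,z_1,\dots,z_k=x'$ in $\mathfrak{L}_2(G)$, and applying the local statement to each edge $\langle z_{i-1},z_i\rangle$ shows that consecutive images $q(z_{i-1})$ and $q(z_i)$ are equal or adjacent. Deleting repetitions yields a walk in $\mathfrak{G}_{\Lambda}$ from $\gamma=q(x)$ to $\gamma'=q(x')$, so the two cycles lie in the same component; as $\gamma,\gamma'$ were arbitrary, $\mathfrak{G}_{\Lambda}$ is connected. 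The only delicate point is the closed-edge case of the local statement, where one must be sure the promised open edges at the two endpoints genuinely lie inside the \emph{same} reduced clique $\mathbb{X}$; this is exactly what the validity condition of Definition \ref{def:open_closed_edges_in_reduced_double_line} guarantees when applied within $\mathbb{X}$, and it is the step I would verify most carefully.
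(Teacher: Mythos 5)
Your argument is correct, but it establishes connectedness by a genuinely different route than the paper. The paper argues downstairs: it applies the projection $\chi_{\mathcal{E}}$ and reasons by contradiction that two components of $\mathfrak{G}_{\Lambda}$ would split the edge set of $\mathcal{L}(G)$ into two color classes no two of whose edges meet at a vertex, contradicting connectedness of $\mathcal{L}(G)$; the key fact that cycles from different components cannot color two edges at a common vertex is left implicit there. You argue upstairs, entirely inside $\mathfrak{L}_2(G)$: the quotient map $q$ together with your local dichotomy (open edge $\Rightarrow q(x)=q(y)$; closed edge $\Rightarrow$ the validity condition of Definition \ref{def:open_closed_edges_in_reduced_double_line} supplies open edges of $q(x)$ and $q(y)$ inside the \emph{same} reduced clique, hence adjacency per Definition \ref{def:adjacent_cycles}) transports the connectedness of $\mathfrak{L}_2(G)$ from Proposition \ref{prop:reduced_double_line_graph_4_reg_con} to $\mathfrak{G}_{\Lambda}$ by chaining along a path. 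The delicate point you flag does hold: each $K_4$ loses exactly one edge to each of the two deleted triangles meeting it, and these form a perfect matching, so a reduced clique is a $4$-cycle on which a valid labeling alternates; consequently the closed edge $\langle x,y\rangle$ and the open edges at $x$ and at $y$ all lie in the one reduced clique covering $\langle x,y\rangle$ (the cover being edge-disjoint), and the open edge at $x$ genuinely belongs to $q(x)$ because $q(x)$ uses both open edges at $x$. In fact your closed-edge lemma is precisely the fact the paper's projection argument tacitly relies on — two cycles meeting the same reduced clique each use one of its two open edges and are therefore adjacent or identical — so your version makes the paper's implicit step explicit and dispenses with the projection machinery and with $\mathcal{L}(G)$ altogether, at the modest cost of verifying the internal structure of a reduced clique. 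The treatment of simplicity and loops is the same definitional observation in both proofs. What the paper's route buys is continuity with the $\chi_{\mathcal{E}}$ formalism it reuses later (e.g., Lemma \ref{lem:line_graph_trail_proj}); what yours buys is a self-contained and more rigorous argument whose only external input is Proposition \ref{prop:reduced_double_line_graph_4_reg_con}.
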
	
		\begin{proof}
			To start, the graph $\mathfrak{G}_{\Lambda}$ is simple with potential loops, since adjacency may occur between $\gamma\in\Gamma_{\Lambda}$ and itself, implying loops, but is due to structure a binary property, implying that $\mathfrak{G}_{\Lambda}$ is simple. On the other hand, to show that $\mathfrak{G}_{\Lambda}$ is connected, we consider the projection $\chi_{\mathcal{E}}$ from $(\mathfrak{L}_2(G),\mathcal{X},\Lambda)$ to $\mathcal{L}(G)$. Assume that $\mathfrak{G}_{\Lambda}$ has at least two connected components. Since the projection $\chi_{\mathcal{E}}(\Gamma_{\Lambda})$ gives a valid edge-labeling of $\mathcal{G}$, this implies that $\mathcal{L}(G)$ can be separated in two subsets of edges with disjoint colorings $\mathcal{C}_1$ and $\mathcal{C}_2$, with no two edges $e_1\in\mathcal{C}_1$ and $e_2\in\mathcal{C}_2$ being incident to the same vertex. Consequently, $\mathcal{L}(G)$ has at least two connected components, which is a contradiction to $G$ being connected.  
		\end{proof}			
		We come back to the properties of $\mathfrak{G}_{\Lambda}$ in Subsection \ref{subsub:monotonicity_type_a_deletion} where it serves to control the structural evolution of $\Gamma_{\Lambda}$ under transformation of the labels. Adjacency can locally be understood as illustrated in Figure \ref{fig:local_representation_adjacent_cycles}. If two cycles are adjacent, the layout in Figure \ref{fig:local_representation_adjacent_cycles} can naturally be transformed to the diagonals being open. 
		\begin{figure}[H]
			\centering
			\includegraphics[scale=1.5]{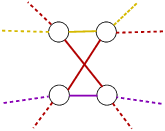}
			\caption{Two adjacent cycles and a joining reduced clique. Red edges are again closed and cycles are yellow and purple.}	
			\label{fig:local_representation_adjacent_cycles}
		\end{figure}
		Nonetheless, for illustration purposes, we will use the layout presented in Figure \ref{fig:local_representation_adjacent_cycles} as an idea for adjacent cycles while the diagonals will be employed for self-intersections as defined in Definition \ref{def:self_intersection_cycles}.
		\begin{definition}\label{def:self_intersection_cycles}
			Let $\gamma\in\Gamma_{\Lambda}$. If there is a reduced clique $\mathbb{X}$ such that both open edges in $\mathbb{X}$ belong to $\gamma$, then we say that $\gamma$ has a self-intersection and $\mathbb{X}$ is a self-intersection of $\gamma$. 
		\end{definition}
		Again, self-intersections are not necessarily unique. A possible illustration shown in Figure \ref{fig:local_representation_self_int_cycles} serves, again, as an idea or intuition. In particular, self-intersections will play a crucial role in the proof in Section \ref{sec:proof_main_thm}. They may arise from families of walks in $G$, such that any edge in $G$ is traversed twice by walks in said family. 
		\begin{figure}[H]
			\centering
			\includegraphics[scale=1.5]{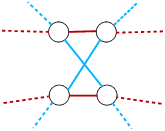}
			\caption{A self-intersection $\mathbb{X}$ of some cycle $\gamma$. The cycle $\gamma$ is colored in light blue.}	
			\label{fig:local_representation_self_int_cycles}
		\end{figure}
		The distinction between adjacency and self-intersections is a binary property of reduced cliques. Moreover, self-intersections of any cycle $\gamma$ can, again, be differentiated into two topologically different types. For this, we need the following cycle notation, which is inspired by knot theory.
		\begin{definition}\label{not:cycle_types}
			Let $(\mathfrak{L}_2(G),\mathcal{X},\Lambda)$ a reduced order two line graph with reduced cliques $\mathcal{X}$ and valid labeling $\Lambda$. Then we distinguish four types of cycles in $\Gamma_{\Lambda}$ using the following set of diagrams.\footnote{Those are similar to knot diagrams, being not unique for a given cycle, with an additional local-global component in cases b)-d).} Black lines in a reduced clique $\mathbb{X}\in\mathcal{X}$ are open edges, red ones closed.
			\begin{enumerate}[label=\alph*)]
				\item A simple cycle without any local structure exposed:\vspace{5pt}\par
							\begin{minipage}{\linewidth}
								\centering
								\includegraphics[scale=0.6]{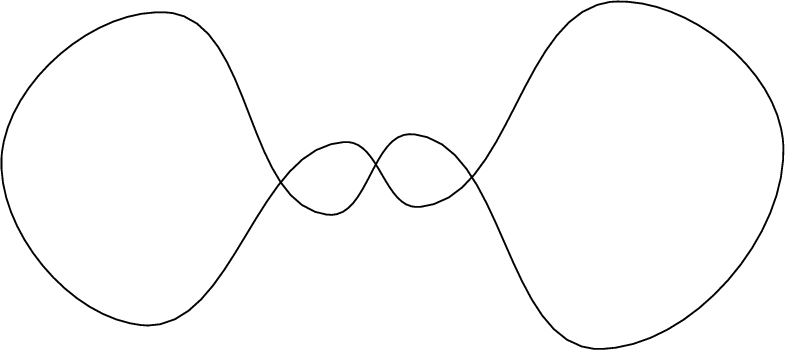}
							\end{minipage}\vspace{7pt}
				\item A self-intersecting cycle with focus on a self-intersection $\mathbb{X}$, called \textbf{Type A}:\label{item:self_int_Type_A}\vspace{5pt}\par
							\begin{minipage}{\linewidth}
								\centering
								\includegraphics[scale=1,angle=90]{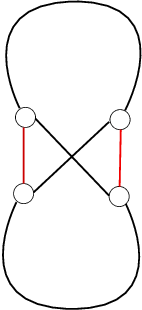}
							\end{minipage}\vspace{7pt}
				\item A self-intersecting cycle with focus on a self-intersection $\mathbb{X}$, called \textbf{Type B}:\label{item:self_int_Type_B}\vspace{5pt}\par
							\begin{minipage}{\linewidth}
								\centering
								\includegraphics[scale=1]{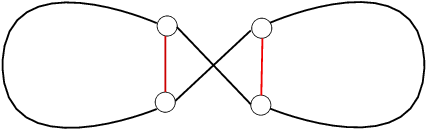}
							\end{minipage}
			\end{enumerate}
		\end{definition}	
		We want to emphasize that "intersections" in the sense of Definition \ref{def:self_intersection_cycles} and as presented in Definition \ref{not:cycle_types} \ref{item:self_int_Type_A} and \ref{item:self_int_Type_B} are not actual intersections which traverse a vertex or an edge twice. It is solely a view on the structure of the reduced cliques $\mathbb{X}\in\mathcal{X}$. 
		\begin{definition}\label{def:label_inversions}
			Consider $(\mathfrak{L}_2(G),\mathcal{X})$ and 
			\begin{equation}
				\mathfrak{V} := \lbrace \Lambda\text{ valid labling of }(\mathfrak{L}_2(G),\mathcal{X})\rbrace.
			\end{equation}
			 For $\mathbb{X}\in\mathcal{X}$, we define the map $\mathcal{F}_{\mathbb{X}}:\mathfrak{V}\to \mathfrak{V}$ for $\Lambda=\left\lbrace  \lbrace \lambda_e\rbrace_{\lbrace e\in \mathbb{Y}\rbrace}\,\big|\,\mathbb{Y}\in\mathcal{X}\right\rbrace$ as  
			 \begin{equation}
			 \mathcal{F}_{\mathbb{X}}(\Lambda):=\left\lbrace  \lbrace \lambda_e\rbrace_{\lbrace e\in \mathbb{Y}\rbrace}\,\big|\,\mathbb{Y}\neq\mathbb{X}\right\rbrace \cup\left\lbrace  \lbrace 1-\lambda_e\rbrace_{\lbrace e\in \mathbb{X}\rbrace}\right\rbrace.
			 \end{equation}			
			We call $\mathcal{F}_{\mathbb{X}}$ a label inversion of the reduced clique $\mathbb{X}\in \mathcal{X}$ over $(\mathfrak{L}_2(G),\mathcal{X})$.
		\end{definition}		
		 Label inversions can be seen in many different ways, from generators of spin dynamics in statistical mechanics to open-closed path flippings in electrical networks and combined with their underlying reduced cycle $\mathbb{X}$ they even can be interpreted as inversions in logic gates with two inputs and two outputs. Label inversions have the following important properties.
		\begin{lemma}\label{lem:intersections_joining_cycles_after_flips}
			Consider $(\mathfrak{L}_2(G),\mathcal{X},\Lambda)$ and let $\mathbb{X}\in\mathcal{X}$. Then, we have the following properties.
			\begin{enumerate}[label=\alph*)]
				\item If $\mathbb{X}$ joins two adjacent cycles in $(\mathfrak{L}_2(G),\mathcal{X},\Lambda)$, then $\mathbb{X}$ is a Type B intersection in $(\mathfrak{L}_2(G),\mathcal{X},\mathcal{F}_{\mathbb{X}}(\Lambda))$.
				\item If $\mathbb{X}$ is a Type B intersection in $(\mathfrak{L}_2(G),\mathcal{X},\Lambda)$, then $\mathbb{X}$ joins two adjacent cycles in $(\mathfrak{L}_2(G),\mathcal{X},\mathcal{F}_{\mathbb{X}}(\Lambda))$.
				\item If $\mathbb{X}$ is a Type A intersection in $(\mathfrak{L}_2(G),\mathcal{X},\Lambda)$, then $\mathbb{X}$ is a Type A intersection $(\mathfrak{L}_2(G),\mathcal{X},\mathcal{F}_{\mathbb{X}}(\Lambda))$.
			\end{enumerate}			  
		\end{lemma}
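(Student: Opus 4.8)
The plan is to reduce all three claims to a single finite case analysis on a fixed reduced clique $\mathbb{X}$, exploiting that $\mathcal{F}_{\mathbb{X}}$ only alters the labels inside $\mathbb{X}$. First I would record the local model of $\mathbb{X}$. By Proposition \ref{prop:reduced_double_line_graph_4_reg_con} and the remark following Definition \ref{def:open_closed_edges_in_reduced_double_line}, every vertex of $\mathbb{X}$ carries exactly one open and one closed edge inside $\mathbb{X}$, so $\mathbb{X}$ is a $4$-cycle on vertices $v_1,v_2,v_3,v_4$ whose two valid labelings are the two alternating ones, and $\mathcal{F}_{\mathbb{X}}$ interchanges them. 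Writing the open pair of $\mathbb{X}$ under $\Lambda$ as the opposite edges $\{\langle v_1,v_2\rangle,\langle v_3,v_4\rangle\}$, the open pair under $\mathcal{F}_{\mathbb{X}}(\Lambda)$ is the complementary opposite pair $\{\langle v_2,v_3\rangle,\langle v_4,v_1\rangle\}$. Each $v_i$ additionally meets exactly one open edge lying in its \emph{other} reduced clique, call it the external strand $s_i$; these four strands, together with the entire open-edge subgraph outside $\mathbb{X}$, are left untouched by $\mathcal{F}_{\mathbb{X}}$.

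Next I would isolate the key structural invariant. Deleting the two internal open edges of $\mathbb{X}$ from the open-edge subgraph $\mathfrak{E}_2^{(open)}$ turns each $v_i$ into a degree-one endpoint, so by Lemma \ref{lem:open_edges_disjoint_union_of_cycles} the open paths running outside $\mathbb{X}$ match the four ports $v_1,v_2,v_3,v_4$ into one of the three pairings of $\{1,2,3,4\}$. I would stress that this external pairing $P$ is an invariant of $\mathbb{X}$: it is the same whether computed from $\Lambda$ or from $\mathcal{F}_{\mathbb{X}}(\Lambda)$, precisely because $\mathcal{F}_{\mathbb{X}}$ changes only edges inside $\mathbb{X}$. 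Glueing $P$ to the internal open pairing then recovers $\Gamma_{\Lambda}$ near $\mathbb{X}$ by a short trace in which every port has degree two: $\mathbb{X}$ joins two distinct cycles (Definition \ref{def:adjacent_cycles}) exactly when $P$ coincides with the internal open pairing, and $\mathbb{X}$ is a self-intersection (Definition \ref{def:self_intersection_cycles}) exactly when they differ. Matching this against the diagrams of Definition \ref{not:cycle_types}, the self-intersection is \textbf{Type B} when $P$ equals the open pairing produced by $\mathcal{F}_{\mathbb{X}}$, and \textbf{Type A} when $P$ is the remaining, non-alternating ``crossing'' pairing, which is realizable by neither valid labeling.

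With this dictionary the three statements become immediate consequences of the single invariant $P$. For (a), if $\mathbb{X}$ joins two adjacent cycles under $\Lambda$ then $P$ equals the open pairing of $\Lambda$; after $\mathcal{F}_{\mathbb{X}}$ the internal open pairing becomes the flipped one, so $P$ now agrees with the pairing a further flip would produce, which is exactly the Type B condition. For (c), a Type A self-intersection means $P$ is the crossing pairing; since this differs from \emph{both} alternating pairings, $\mathbb{X}$ remains a self-intersection of the same Type A after the flip. Statement (b) is the involutive mirror of (a): because $\mathcal{F}_{\mathbb{X}}\circ\mathcal{F}_{\mathbb{X}}=\mathrm{id}$, a Type B self-intersection under $\Lambda$ means $P$ equals the open pairing of $\mathcal{F}_{\mathbb{X}}(\Lambda)$, whence under $\mathcal{F}_{\mathbb{X}}(\Lambda)$ the internal and external pairings coincide and $\mathbb{X}$ joins two adjacent cycles.

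I expect the main obstacle to be expository rather than mathematical: carefully pinning down, against the knot-style diagrams of Definition \ref{not:cycle_types}, that ``Type B'' is the flip pairing while ``Type A'' is the crossing pairing, and verifying that the external connection really is a pairing of the four ports, with no port joined to itself and none left dangling. This last point is exactly where the degree-one endpoint count after deleting the internal open edges, together with Lemma \ref{lem:open_edges_disjoint_union_of_cycles}, does the work; once it is secured, the trichotomy of pairings and the invariance of $P$ under $\mathcal{F}_{\mathbb{X}}$ close all three cases simultaneously.
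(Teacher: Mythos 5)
Your proposal is correct and follows essentially the same route as the paper: the paper disposes of this lemma with the one-line remark that the three cases ``can be checked directly from Definition \ref{not:cycle_types} and the associated diagrams'', i.e., exactly the local case analysis on a single reduced clique under $\mathcal{F}_{\mathbb{X}}$ that you carry out. Your contribution is purely one of rigor: identifying $\mathbb{X}$ as a $4$-cycle whose two valid labelings are the alternating ones swapped by $\mathcal{F}_{\mathbb{X}}$, and introducing the flip-invariant external port-pairing $P$ (with the trichotomy: internal pairing $=$ adjacency, flipped pairing $=$ Type B, diagonal pairing $=$ Type A), formalizes precisely what the paper leaves to diagram inspection.
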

		These cases can be checked directly from Definition \ref{not:cycle_types} and the associated diagrams, and form the basis for our further arguments on joining and separating cycles. The goal of the whole setup is demonstrated in the following Lemma.
		\begin{lemma}\label{lem:projection_without_intersection_results_in_cycles}
			Let $\Lambda$ a valid labeling such that there are no self-intersections in $(\mathfrak{L}_2(G),\mathcal{X},\Lambda)$. Then, the projection of $\Lambda$ to $G$ under $\pi\circ \chi_{\mathcal{E}}$ is a set of cycles.
		\end{lemma}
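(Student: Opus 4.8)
\emph{Overview.} The plan is to build directly on Lemma~\ref{lem:line_graph_trail_proj}, which already guarantees that $\mathcal{W}_G := \pi(\chi_{\mathcal{E}}(\Gamma_\Lambda))$ is a set of closed walks covering every edge of $G$ exactly twice. What remains is therefore only to promote each closed walk to a simple cycle, i.e.\ to exclude repeated edges and then repeated vertices. The whole argument hinges on translating the hypothesis ``no self-intersections'' into a statement about the two traversals of a single edge $e\in E$, after which the conclusion follows from a short local degree count.

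\emph{The key correspondence.} First I would make precise the relation between the two open edges of a reduced clique $\mathbb{X}_e$ and the two traversals of $e$ in $G$. Write $e=\langle u,v\rangle$ with the two further edges at $u$ being $f_1,f_2$ and at $v$ being $g_1,g_2$; the four vertices of $\mathbb{X}_e$ are then the edges $\langle e,f_1\rangle,\langle e,f_2\rangle,\langle e,g_1\rangle,\langle e,g_2\rangle$ of $\mathcal{L}(G)$ incident to $e$. The two edges deleted in forming $\mathfrak{L}_2(G)$ from this $K_4$ come from the triangles induced by $u$ and by $v$, and these are exactly the two ``same-side'' edges $\langle e,f_1\rangle\langle e,f_2\rangle$ and $\langle e,g_1\rangle\langle e,g_2\rangle$. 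Hence every surviving edge of $\mathbb{X}_e$ joins a $u$-side vertex to a $v$-side vertex, so the two open edges form a ``crossing'' matching. An open edge joining, say, $\langle e,f_1\rangle$ to $\langle e,g_1\rangle$ projects under $\chi_{\mathcal{E}}$ to the consecutive pair of $\mathcal{L}(G)$-edges $\langle e,f_1\rangle,\langle e,g_1\rangle$, which share the vertex $e$, and hence under $\pi$ to the walk-segment $f_1,e,g_1$ in $G$, a single traversal of $e$. Thus the two open edges of $\mathbb{X}_e$ correspond bijectively to the two traversals of $e$ by $\mathcal{W}_G$.

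\emph{From the hypothesis to simple cycles.} Using this correspondence, I would argue as follows. By Definition~\ref{def:self_intersection_cycles}, $\mathbb{X}_e$ is a self-intersection exactly when its two open edges lie on the same $\gamma\in\Gamma_\Lambda$, which by the previous paragraph happens exactly when a single walk of $\mathcal{W}_G$ performs both traversals of $e$, i.e.\ uses $e$ twice. Consequently the assumption that no $\mathbb{X}_e$ is a self-intersection is equivalent to the statement that no walk of $\mathcal{W}_G$ traverses any edge of $G$ twice; that is, every walk in $\mathcal{W}_G$ is a closed trail. To finish, I would rule out repeated vertices by a local count: since $G$ is cubic, a vertex $w$ has exactly three incident edges, and a walk $\gamma\in\mathcal{W}_G$ passing through $w$ a total of $k$ times consumes $2k$ edge-ends at $w$. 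As $\gamma$ is a closed trail it uses each of the three edges at $w$ at most once, whence $2k\le 3$ and therefore $k\le 1$. So every walk visits every vertex at most once and is a simple cycle; $\mathcal{W}_G$ is thus a set of cycles, covering each edge twice by Lemma~\ref{lem:line_graph_trail_proj}.

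\emph{Main obstacle.} I expect the delicate point to be the bookkeeping in the middle step: verifying that the two surviving open edges of $\mathbb{X}_e$ are precisely the crossing edges, and that their images under $\pi\circ\chi_{\mathcal{E}}$ are exactly the two traversals of $e$ rather than segments that double back at $u$ or $v$. Once this identification is pinned down, both the equivalence with ``no self-intersection'' and the final degree count are immediate.
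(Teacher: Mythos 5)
Your proposal is correct and follows essentially the same route as the paper: the paper's proof is precisely the contrapositive of your middle step, arguing that a doubly-traversed edge in $G$ would lift, via a monochromatic vertex of $\mathcal{L}(G)$, to a self-intersection in $(\mathfrak{L}_2(G),\mathcal{X},\Lambda)$, whereas you run the same correspondence forwards. Your two additions — the explicit verification that the two surviving open edges of $\mathbb{X}_e$ are the crossing matching and encode the two traversals of $e$, and the degree count $2k\le 3$ showing closed trails in a cubic graph are vertex-simple — are details the paper leaves implicit (its inference ``not a cycle, therefore traverses an edge twice'' tacitly uses cubicness), so your writeup is, if anything, the more careful version of the same argument.
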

		\begin{proof}
			Assume that $(\pi\circ \chi_{\mathcal{E}})(\Lambda)$ implies a closed walk on $G$, which is not a cycle and which, therefore, traverses an edge twice. The inverse operation would imply that there is a vertex in $\mathcal{L}(G)$ such that all incident edges have the same color. This is impossible by assumption on $\Lambda$ since the lift of this vertex would give a self-intersection in $(\mathfrak{L}_2(G),\mathcal{X},\Lambda)$.
		\end{proof}
		Our goal is, therefore, to find a set of labels $\Lambda$, which has no self-intersections. To avoid lengthy wordings in what follows, we use the following definition.
		\begin{definition}
			Given $(\mathfrak{L}_2(G),\mathcal{X},\Lambda)$, let $\mathbb{X}\in\mathcal{X}$ be an intersection of Type A or B in $(\mathfrak{L}_2(G),\mathcal{X},\Lambda)$. We call $\mathbb{X}$ reducible, if there is a finite number of label inversions $\mathcal{F}_{\mathbb{X}_1},\hdots,\mathcal{F}_{\mathbb{X}_n}$ such that $\mathbb{X}$ joins two cycles in $(\mathfrak{L}_2(G),\mathcal{X},\Lambda')$ where $\Lambda':=(\mathcal{F}_{\mathbb{X}_1}\circ\hdots\circ\mathcal{F}_{\mathbb{X}_n})(\Lambda)$.
			In any other case, we call $\mathbb{X}$ irreducible.
		\end{definition}
		Indeed, by Lemma \ref{lem:intersections_joining_cycles_after_flips} all intersections of Type B are reducible, using simply their assigned label inversion $\mathcal{F}_{\mathbb{X}}$. This is not always the case for Type A intersections. The first case, for which reducibility fails, will be discussed next. 
		\begin{lemma}\label{lem:bridge_implies_typeA_label}
			Let $G$ be a simple connected cubic graph. If $G$ has a bridge, then for all valid labelings $\Lambda$ there is a self-intersection $\mathbb{X}$ of Type A in $(\mathfrak{L}_2(G),\mathcal{X},\Lambda)$.
		\end{lemma}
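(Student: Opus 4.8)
The plan is to locate the forced Type A self-intersection at the reduced clique $\mathbb{X}_e$ associated with the bridge $e=\langle u,w\rangle$, exploiting that this clique is precisely the edge-cut of $\mathfrak{L}_2(G)$ separating the two sides of the bridge. Note first that a bridge lies in no cycle of $G$, hence in no triangle, so the local picture around $e$ is the standard reduced clique described in the proof of Proposition \ref{prop:reduced_double_line_graph_4_reg_con}.

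\emph{Step 1 (the bridge clique is a cut).} I would first identify the four vertices of $\mathbb{X}_e$ explicitly. Writing $a_1,a_2$ for the edges of $G$ at $u$ other than $e$ and $b_1,b_2$ for those at $w$, the vertices of $\mathbb{X}_e$ are the length-two paths $\{a_i,e\}$ and $\{e,b_j\}$, and after deletion of the two triangles coming from the degree-three vertices $u$ and $w$ the surviving edges of $\mathbb{X}_e$ are exactly the four edges joining some $\{a_i,e\}$ to some $\{e,b_j\}$ (a $4$-cycle). I would then partition $V(\mathfrak{L}_2(G))$ into $S_u$, the length-two paths meeting only the $u$-side of $G\setminus e$ together with $\{a_1,e\},\{a_2,e\}$, and $S_w$ symmetrically. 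Since $e$ is a bridge, it is the unique edge of $G$ incident to both sides, and two vertices of $\mathfrak{L}_2(G)$ are adjacent only when the corresponding length-two paths share a common edge of $G$; hence every edge crossing the cut $(S_u,S_w)$ must use $e$ and is therefore one of the four edges of $\mathbb{X}_e$. Thus $\mathbb{X}_e$ carries the entire $S_u$--$S_w$ edge-cut.

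\emph{Step 2 (forced self-intersection via parity).} Given any valid labeling $\Lambda$, the clique $\mathbb{X}_e$ carries exactly two open edges (one matching of its $4$-cycle), and both cross the cut; by Step 1 no other crossing edge exists. Each $\gamma\in\Gamma_\Lambda$ is, by Lemma \ref{lem:open_edges_disjoint_union_of_cycles}, a cycle, hence a closed curve meeting the cut $(S_u,S_w)$ an even number of times. As only two crossing open edges exist in total, a cycle using one of them must use the other to return, so both open edges of $\mathbb{X}_e$ lie on the \emph{same} cycle. By Definition \ref{def:self_intersection_cycles}, $\mathbb{X}_e$ is therefore a self-intersection, and this holds for \emph{every} valid labeling.

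\emph{Step 3 (ruling out Type B), and the main obstacle.} It remains to exclude Type B. The argument of Steps 1--2 is untouched by the inversion $\mathcal{F}_{\mathbb{X}_e}$: flipping only replaces the two open edges by the other matching of the $4$-cycle, which are again the two cut edges, so $\mathbb{X}_e$ remains a self-intersection in $(\mathfrak{L}_2(G),\mathcal{X},\mathcal{F}_{\mathbb{X}_e}(\Lambda))$ and its two open edges still lie on one cycle, i.e.\ it does not join two distinct cycles in the sense of Definition \ref{def:adjacent_cycles}. Were $\mathbb{X}_e$ of Type B in $(\mathfrak{L}_2(G),\mathcal{X},\Lambda)$, Lemma \ref{lem:intersections_joining_cycles_after_flips}(b) would assert exactly that $\mathbb{X}_e$ joins two adjacent cycles after the flip, a contradiction. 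Since a self-intersection is either Type A or Type B by Definition \ref{not:cycle_types}, $\mathbb{X}_e$ is Type A. I expect the genuinely delicate point to be Step 1: correctly reading off the vertex and edge identity of $\mathbb{X}_e$ from the iterated-line-graph construction and verifying that its four surviving edges are precisely the $S_u$--$S_w$ crossing edges. Once that cut structure is in place, Steps 2 and 3 are a short parity argument together with the flip-invariance already recorded in Lemma \ref{lem:intersections_joining_cycles_after_flips}.
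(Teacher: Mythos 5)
Your proof is correct, and it reaches the conclusion by a more explicit route than the paper does. The paper's proof is a three-sentence sketch carried out entirely on the level of $\mathcal{L}(G)$: a bridge of $G$ is a cut-vertex $q$ of $\mathcal{L}(G)$, any closed trail through $q$ must pass through $q$ twice, and the lift of this forced double passage to the reduced clique $\mathbb{X}_q$ is said to imply a Type A self-intersection ``which cannot be resolved'' --- neither the lifting step nor the classification as Type A rather than Type B is actually argued there. You instead work directly in $\mathfrak{L}_2(G)$: your Step 1 verifies concretely that the four surviving edges of $\mathbb{X}_e$ form a $4$-cycle on the vertices $\lbrace a_i,e\rbrace,\lbrace e,b_j\rbrace$ constituting the \emph{entire} edge-cut between the two sides of the bridge (this is exactly the ``by construction'' the paper waves at, and your reading of the iterated line graph and of the two deleted vertex-triangles is accurate), and your Step 2 forces the self-intersection for every valid labeling via the even-crossing parity of cycles, which makes the universal quantification over $\Lambda$ transparent in a way the trail-through-$q$ argument does not. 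Most importantly, your Step 3 supplies what the paper merely asserts: because all four edges of the reduced $4$-cycle cross the cut, the parity argument applies verbatim to $\mathcal{F}_{\mathbb{X}_e}(\Lambda)$, so $\mathbb{X}_e$ remains a self-intersection after the flip, and part b) of Lemma \ref{lem:intersections_joining_cycles_after_flips} then excludes Type B by contradiction; the paper effectively conflates irreducibility with the type classification at this point. The one thing you inherit from the paper rather than prove is the exhaustiveness of the Type A/Type B dichotomy for self-intersections, since Definition \ref{not:cycle_types} is diagrammatic, but the paper's own proof rests on the same classification, so this is not a gap relative to it.
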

		\begin{proof}
			A bridge in $G$ is a cut-vertex in $\mathcal{L}(G)$ and vice-versa. Call this vertex in $\mathcal{L}(G)$ simply $q$. Then, the vertex $q$ implies by construction a reduced clique in $(\mathfrak{L}_2(G),\mathcal{X},\Lambda)$, which we call $\mathbb{X}_q$. Since any closed trail in $\mathcal{L}(G)$ containing $q$ has to go through $q$ twice, the lift to $\mathbb{X}_q$ implies a Type A self-intersection which cannot be resolved since it separates the cycle into two independent parts which cannot be joined otherwise. 
		\end{proof}
		The obvious consequence is that said Type A intersection is irreducible and it is, therefore, impossible to obtain labelings without self-intersections if the underlying graph $G$ has a bridge. The statement in Lemma \ref{lem:bridge_implies_typeA_label} is, indeed, an equivalence but the second part, stated in Proposition \ref{prop:self_ints_everywhere_imply_bridge}, needs further work which we need to do first.  
		\begin{proposition}\label{prop:self_ints_everywhere_imply_bridge}
			Let $G$ be a simple connected cubic graph. If for all valid labelings $\Lambda$ there is a self-intersection $\mathbb{X}$ of Type A in $(\mathfrak{L}_2(G),\mathcal{X},\Lambda)$, then $G$ has a bridge.
		\end{proposition}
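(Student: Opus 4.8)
The plan is to prove the contrapositive, and in fact to produce the bridge explicitly: assuming $G$ is a connected cubic graph for which \emph{every} valid labeling carries a Type A self-intersection, I will locate a reduced clique $\mathbb{X}_e$ whose associated edge $e=\langle v,w\rangle\in E$ is forced to be a bridge. Throughout I use the equivalence recorded in the proof of Lemma \ref{lem:bridge_implies_typeA_label}, namely that an edge $e$ of $G$ is a bridge if and only if the corresponding vertex of $\mathcal{L}(G)$ is a cut-vertex.

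First I fix an extremal labeling. Since there are only finitely many valid labelings, I may choose $\Lambda$ maximizing the number of cycles $|\Gamma_{\Lambda}|$. A single inversion $\mathcal{F}_{\mathbb{X}}$ swaps only the two open edges inside $\mathbb{X}$; because both open edges of a self-intersection $\mathbb{X}$ lie on a single cycle, all four vertices of $\mathbb{X}$ lie exclusively on that cycle, so the inversion reroutes only it and leaves every other cycle intact. By Lemma \ref{lem:intersections_joining_cycles_after_flips}(b), applied to a Type B self-intersection, such an inversion replaces one cycle by two and hence increases $|\Gamma_{\Lambda}|$ by exactly one. Maximality therefore forces $\Lambda$ to contain no Type B self-intersection, so by hypothesis it contains a Type A self-intersection at some $\mathbb{X}_e$ with $e=\langle v,w\rangle$.

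Next I read off the local picture of Definition \ref{not:cycle_types}\ref{item:self_int_Type_A}. Deleting the two open edges of $\mathbb{X}_e$ from the cycle $\gamma\in\Gamma_{\Lambda}$ through it splits $\gamma$ into two arcs; crucially, in the Type A configuration one arc stays entirely on the ``$v$-side'' of $\mathbb{X}_e$ and the other entirely on the ``$w$-side'', where the two sides are the pairs of vertices of $\mathbb{X}_e$ arising from the remaining edges of $G$ at $v$ and at $w$. Equivalently, the closed trail $\chi_{\mathcal{E}}(\gamma)$ in $\mathcal{L}(G)$ passes through the vertex $e$ twice, with its two passages linked only within each side. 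The decisive point is that turning this crossing into the splittable Type B configuration requires an arc of some cycle running between the $v$-side and the $w$-side of $\mathbb{X}_e$, i.e.\ a path in $\mathcal{L}(G)\setminus\{e\}$ joining the two neighbourhoods of $e$. I claim such a path cannot exist: if it did, I would lift it to a chain of reduced cliques, invert the labels along it to re-pair the two passages of $\gamma$ through $e$, thereby converting $\mathbb{X}_e$ into a Type B self-intersection, and finally invert $\mathbb{X}_e$ to split $\gamma$, producing a valid labeling with strictly more cycles than $\Lambda$ and contradicting maximality. Hence $\mathcal{L}(G)\setminus\{e\}$ does not connect the two sides of $e$, so $e$ is a cut-vertex of $\mathcal{L}(G)$ and therefore a bridge of $G$; this is exactly the situation converse to Lemma \ref{lem:bridge_implies_typeA_label}.

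The main obstacle is the claimed re-pairing move: I must show that inverting labels along the connecting path genuinely converts the Type A crossing at $\mathbb{X}_e$ into a Type B crossing, and that the net change of $|\Gamma_{\Lambda}|$ over the whole chain of inversions together with the final split is positive. Individual inversions may merge as well as split cycles (Lemma \ref{lem:intersections_joining_cycles_after_flips}), so the count cannot be controlled locally; the argument instead has to be organized through the cycle-adjacency graph $\mathfrak{G}_{\Lambda}$ of Lemma \ref{lem:graph_of_cycles} and the monotonicity of $\Gamma_{\Lambda}$ under inversions developed in Subsection \ref{subsub:monotonicity_type_a_deletion}, which should guarantee that a connecting path can always be traded for a strict increase in the number of cycles. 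Verifying this monotonicity along the path, equivalently that ``no $v$--$w$ route off $e$'' is the only obstruction to reducibility of a Type A crossing, is where the real work of the proof lies.
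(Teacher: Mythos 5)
Your extremal framing --- maximize $|\Gamma_{\Lambda}|$, observe via Lemma \ref{lem:intersections_joining_cycles_after_flips}(b) that a maximal labeling has no Type B self-intersection, then try to contradict maximality at a Type A clique $\mathbb{X}_e$ --- is genuinely different from the paper, which instead runs a strictly decreasing count of Type A self-intersections, resolved case by case through lifted minimal centered vertex cuts of sizes $|\mathbb{V}_v|\in\{2,3,4\}$ (Appendix \ref{app:centered_vertex_cut}, Subsection \ref{subsub:reduction_Type_A_self_int}) together with the bookkeeping of Subsection \ref{subsub:monotonicity_type_a_deletion}. The preliminary steps you do carry out are sound: the four vertices of a self-intersecting reduced clique all lie on the one cycle through it, an inversion there touches no other cycle, and a Type B inversion raises $|\Gamma_{\Lambda}|$ by exactly one. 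But the decisive step is missing, and as written the accounting goes the wrong way. To re-pair the two passages of $\gamma$ through $\mathbb{X}_e$ you must invert joining cliques along a lifted chain of cycles (a path in $\mathfrak{G}_{\Lambda}$, Lemma \ref{lem:graph_of_cycles}); by Lemma \ref{lem:intersections_joining_cycles_after_flips}(a) each such inversion \emph{merges} two cycles, so a chain passing through $k$ intermediate cycles costs $k$ mergers while the final split at $\mathbb{X}_e$ (note $\mathcal{F}_{\mathbb{X}_e}$ itself cannot help, by part (c)) regains only one cycle: the net change is $1-k\le 0$ for $k\ge 1$, and no contradiction with maximality of $|\Gamma_{\Lambda}|$ results. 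Recovering a strict gain requires exhibiting additional splittable Type B configurations created along the way --- exactly the role of the extra cycle appearing in the paper's resolution (Figure \ref{fig:type_B_type_A_conversion_connected}) --- and this is not automatic from the existence of a $v$-side-to-$w$-side path in $\mathcal{L}(G)\setminus\{e\}$.

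Your claim that such a path \emph{suffices} to convert the Type A crossing is precisely the content of the proposition, and you defer it ("where the real work of the proof lies") rather than prove it. The paper's whole Section 3 exists because this implication is not a one-line topological fact: one must (i) classify how cycles can traverse the lifted cut (the generic-cycle catalogue), (ii) choose the inversions --- exactly one joining clique between consecutive cycles on a shortest path in $\mathfrak{G}_{\Lambda}$ --- so the rerouting exists, and (iii) verify no new Type A self-intersections (or, in your setting, no uncompensated mergers) arise from Type B interactions between the connecting cycle and $\gamma$. Without an analogue of (i)--(iii), or at least a proof that $|\Gamma_{\Lambda}|$ is a valid potential function for your exchange argument (the paper's own resolutions sometimes pass through intermediate labelings with \emph{fewer} cycles, which suggests it may not be), the proposal establishes the contrapositive only up to its central claim and therefore does not yet prove the proposition.
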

		We have to postpone the proof, which will be done constructively, based on the following results. For parts of it we need to take a more global perspective on the cycles than what we have done so far.
		\begin{definition}\label{def:joining_cycles}
			Let $\gamma_1,\gamma_2\in \Gamma_{\Lambda}$ be non-adjacent cycles. We say that a cycle $\gamma_3$ joins $\gamma_1$ and $\gamma_2$, if $\gamma_3$ is adjacent to both $\gamma_1$ and $\gamma_2$. 
		\end{definition}
		It turns out, that two fixed cycles can always be joined as explained in the following Proposition \ref{prop:joining_cycles_in_cubic_graph_double_line_graph}.
		\begin{proposition}\label{prop:joining_cycles_in_cubic_graph_double_line_graph}
			Consider $(\mathfrak{L}_2(G),\mathcal{X},\Lambda)$ and $\gamma_1,\gamma_2\in \Gamma_{\Lambda}$ be non-adjacent cycles. Then, there is a finite number of label inversions $\mathcal{F}_{\mathbb{X}_1},\hdots,\mathcal{F}_{\mathbb{X}_n}$ and $\Lambda'=(\mathcal{F}_{\mathbb{X}_1}\circ\hdots \circ\mathcal{F}_{\mathbb{X}_{n}})(\Lambda)$ such that $\gamma_1,\gamma_2\in \Gamma_{\Lambda'}$ and $\gamma_1,\gamma_2$ are joined in $(\mathfrak{L}_2(G),\mathcal{X},\Lambda')$ by some cycle $\gamma_3\in\Gamma_{\Lambda'}$. 
		\end{proposition}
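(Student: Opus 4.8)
The plan is to combine the connectivity of the cycle graph $\mathfrak{G}_{\Lambda}$ from Lemma \ref{lem:graph_of_cycles} with the fusing effect of a single label inversion established in Lemma \ref{lem:intersections_joining_cycles_after_flips}(a). Since $\gamma_1$ and $\gamma_2$ are non-adjacent, their distance in $\mathfrak{G}_{\Lambda}$ is at least $2$, and since $\mathfrak{G}_{\Lambda}$ is connected I would fix a shortest path $\gamma_1=\delta_0,\delta_1,\dots,\delta_k=\gamma_2$ with $k\geq 2$ in which each consecutive pair $\delta_i,\delta_{i+1}$ is adjacent and hence joined by some reduced clique $\mathbb{X}_{i(i+1)}\in\mathcal{X}$. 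If $k=2$, the single intermediate cycle $\delta_1$ is adjacent to both endpoints and therefore already joins them in the sense of Definition \ref{def:joining_cycles}, so no inversion is needed.

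For $k\geq 3$ the idea is to collapse all intermediate cycles $\delta_1,\dots,\delta_{k-1}$ into one cycle by applying $\mathcal{F}_{\mathbb{X}_{12}},\mathcal{F}_{\mathbb{X}_{23}},\dots,\mathcal{F}_{\mathbb{X}_{(k-2)(k-1)}}$ in this order. I would prove by induction on $j$ that after the first $j$ inversions the cycles $\delta_1,\dots,\delta_{j+1}$ have fused into a single cycle $\delta^{(j)}$, while $\delta_{j+2},\dots,\delta_{k-1}$ and both endpoints $\gamma_1,\gamma_2$ remain untouched. In the inductive step one applies $\mathcal{F}_{\mathbb{X}_{(j+1)(j+2)}}$: at that moment the clique $\mathbb{X}_{(j+1)(j+2)}$ still joins two \emph{distinct} adjacent cycles, namely $\delta^{(j)}$, which has inherited $\delta_{j+1}$'s open edge in that clique, and $\delta_{j+2}$; by Lemma \ref{lem:intersections_joining_cycles_after_flips}(a) the inversion converts this adjacency into a Type B self-intersection and fuses the two cycles into $\delta^{(j+1)}$.

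The consistency of this bookkeeping is the part deserving care, and it rests on two structural facts already at hand: the reduced cliques are edge-disjoint, so each edge lies in exactly one $\mathbb{X}\in\mathcal{X}$, and every reduced clique carries exactly two open edges under a valid labeling. Hence an inversion $\mathcal{F}_{\mathbb{X}_{i(i+1)}}$ changes only the four edges of $\mathbb{X}_{i(i+1)}$, whose two open edges belong solely to $\delta_i$ and $\delta_{i+1}$; in particular the open edge through which $\delta_{i+1}$ reaches the next clique $\mathbb{X}_{(i+1)(i+2)}$ lies in a different clique, survives the inversion, and is carried over to the growing merged cycle, keeping the chain of adjacencies alive. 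The same two facts guarantee that no inverted clique contains an open edge of $\gamma_1=\delta_0$ or $\gamma_2=\delta_k$, so these cycles persist unchanged in $\Gamma_{\Lambda'}$ and remain non-adjacent to one another.

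Finally, putting $\Lambda':=(\mathcal{F}_{\mathbb{X}_{12}}\circ\cdots\circ\mathcal{F}_{\mathbb{X}_{(k-2)(k-1)}})(\Lambda)$ and $\gamma_3:=\delta^{(k-2)}$, the merged cycle is adjacent to $\gamma_1$ via the unchanged clique $\mathbb{X}_{01}$ and to $\gamma_2$ via the unchanged clique $\mathbb{X}_{(k-1)k}$, so $\gamma_3\in\Gamma_{\Lambda'}$ joins the still non-adjacent $\gamma_1,\gamma_2$, as required. The single genuinely delicate point, where I would concentrate the argument, is checking that each application of Lemma \ref{lem:intersections_joining_cycles_after_flips}(a) yields one cycle rather than re-splitting the two external arcs into two; this is exactly what the distinctness $\delta^{(j)}\neq\delta_{j+2}$ at every stage---inherited from the path being simple---secures.
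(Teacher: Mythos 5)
Your proof is correct, but it takes a genuinely different route from the paper's own proof of Proposition \ref{prop:joining_cycles_in_cubic_graph_double_line_graph}. The paper argues globally: it considers the vertex-induced subgraph $\mathcal{H}$ of $\mathfrak{L}_2(G)$ on the vertices outside $\gamma_1\cup\gamma_2$, observes that each reduced clique meeting $\gamma_1$ or $\gamma_2$ leaves exactly one (open) edge inside $\mathcal{H}$, and then \emph{rechooses} the labeling on $\mathcal{H}$ wholesale so that all open edges there form a single cycle $\gamma_3$ adjacent to both $\gamma_1$ and $\gamma_2$ --- an ``all or nothing'' construction, as the paper itself later calls it. You instead work locally: take a shortest path $\gamma_1=\delta_0,\delta_1,\dots,\delta_k=\gamma_2$ in the cycle graph $\mathfrak{G}_{\Lambda}$ of Lemma \ref{lem:graph_of_cycles} and fuse the intermediate cycles one at a time by inverting exactly one joining clique between consecutive pairs, invoking Lemma \ref{lem:intersections_joining_cycles_after_flips}(a) at each step; edge-disjointness of the cliques and the two-open-edges-per-clique property then give the bookkeeping that $\gamma_1,\gamma_2$ and the unflipped boundary cliques survive intact. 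Interestingly, your argument is essentially the ``finer approach'' the paper itself develops later in Subsection \ref{sub:conn_cycles_GLambda} for the monotonicity analysis, where it concedes that the all-or-nothing construction is not the most appropriate one. Your version buys explicitness and control: the set of inverted cliques is named, the conclusion that the result of the merges is a single cycle is reduced to the distinctness $\delta^{(j)}\neq\delta_{j+2}$ along a simple path, and the construction dovetails with the later requirement of not creating Type A self-intersections. The paper's version buys brevity, but at the cost of two glossed points your argument avoids entirely: that $\mathcal{H}$ is connected (it cites only the connectivity of $\mathfrak{L}_2(G)$ from Proposition \ref{prop:reduced_double_line_graph_4_reg_con}, which does not immediately give connectivity of the induced complement), and that a valid labeling tracing all open edges of $\mathcal{H}$ by one cycle actually exists given the forced open boundary edges. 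One small remark: in your $k=2$ base case no inversion is performed, so you are implicitly reading ``a finite number of label inversions'' as allowing $n=0$; this is harmless but worth a sentence.
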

		\begin{proof}
			Let $\gamma_1,\gamma_2\in \Gamma_{\Lambda}$ be non-adjacent cycles. From Proposition \ref{prop:reduced_double_line_graph_4_reg_con}, we know that $(\mathfrak{L}_2(G),\mathcal{X})$ is connected and $\gamma_1,\gamma_2$ can be embedded in $(\mathfrak{L}_2(G),\mathcal{X})$. Additionally, consider the vertex induced subgraph $\mathcal{H}$ of $\mathfrak{L}_2(G)$ defined by the vertex set
			\begin{equation}
				\Omega_{\gamma_1,\gamma_2} = \lbrace v\in \mathfrak{L}_2(G)\,|\, v\not\in \gamma_1\cup \gamma_2\rbrace.
			\end{equation}
			Again, by Proposition \ref{prop:reduced_double_line_graph_4_reg_con}, $\mathcal{H}$ is connected. For $\mathbb{X}\in\mathcal{X}$ with $\mathbb{X}\cap \gamma_1 \neq \emptyset$ or $\mathbb{X}\cap \gamma_2 \neq \emptyset$ exactly two vertices $v,w\in \mathbb{X}$ are still present in $\mathcal{H}$ and exactly one edge connecting $v,w$, since $\gamma_1,\gamma_2\in \Gamma_{\Lambda}$ are non-adjacent cycles. Note that all such edges $\langle v,w\rangle$ are open in $(\mathfrak{L}_2(G),\mathcal{X},\Lambda)$. All other $\mathbb{Y}\in\mathcal{X}$ are subgraphs of $\mathcal{H}$. Consequently, we can choose a valid labeling $\Lambda'$ such that $\gamma_1,\gamma_2\in \Gamma_{\Lambda'}$, all $\langle v,w\rangle$ from before are open and all open edges over $\mathcal{H}$ belong to the same cycle $\gamma_3\in \Gamma_{\Lambda'}$. Then, the new cycle $\gamma_3\in \Gamma_{\Lambda'}$ joins $\gamma_1,\gamma_2\in \Gamma_{\Lambda'}$ in $(\mathfrak{L}_2(G),\mathcal{X},\Lambda')$.
		\end{proof}
		Proposition \ref{prop:joining_cycles_in_cubic_graph_double_line_graph} can be interpreted in parallel to the connectedness of $\mathfrak{G}_{\Lambda}$, the graph defined in Lemma \ref{lem:graph_of_cycles} over the set of vertices $\Gamma_{\Lambda}$ for some valid labeling $\Lambda$, where two cycles $\gamma_1,\gamma_2\in\Gamma_{\Lambda}$ form an edge with adjacency given by Definition \ref{def:adjacent_cycles}. 
		It turns out that the all or nothing construction for $\gamma_3$ from the proof of Proposition \ref{prop:joining_cycles_in_cubic_graph_double_line_graph} is not the most appropriate for the proof of Proposition \ref{prop:self_ints_everywhere_imply_bridge}. We discuss a finer approach in Subsection \ref{subsub:monotonicity_type_a_deletion}.
		\begin{figure}[H]
	         \centering
	         \includegraphics[width=0.4\textwidth]{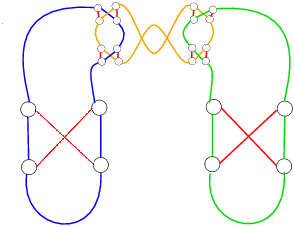}
	         \caption{Cycles $\mathbb{X}_1,\mathbb{X}_2$ containing Type A self-intersections with joining cycle potentially obtained by some label inversions $\mathbb{F}$.}
	         \label{fig:joined_cycles_with possible}
		\end{figure}
		The final preparation step consists in capturing the possible positions of Type A self-intersection. To this end, we employ vertex-cuts in $\mathcal{L}(G)$ which we lift to $(\mathfrak{L}_2(G),\mathcal{X})$ by exploiting the structure of the line graph of a cubic bridgeless graph.
		\begin{definition}\label{def:centered_vertex_cut}
			Let $v\in \mathcal{L}(G)$. Then, we we define a minimal vertex-cut $\mathbb{V}_v$ centered at $v$ as a vertex-cut in $\mathcal{L}(G)$ with $v\in\mathbb{V}_v$, for all $v'\in\mathbb{V}_v$ the set $\mathbb{V}_v\setminus\lbrace v'\rbrace$ is not a vertex-cut and $|\mathbb{V}_v|$ is minimal over all such sets.
		\end{definition}
		We discuss the implications and results of Definition \ref{def:centered_vertex_cut} in Appendix \ref{app:centered_vertex_cut}.
		Evidently, this concept can be lifted to $(\mathfrak{L}_2(G),\mathcal{X})$ by first identifying the edge $e$ in $G$ with a vertex $v_e$ in $\mathcal{L}(G)$ and then associating a reduced clique $\mathbb{X}_v\in\mathcal{X}$ to $v_e$. To this end, we employ the perspective of the line-graph operator discussed in \cite{Har72} based on replacement of each vertex with a complete graph of size $4$ and edge contraction of the old edges. 
		\section{Proof of Proposition \ref{prop:self_ints_everywhere_imply_bridge}}\label{sec:proof_prop}
		We recommend that the reader first goes through Appendix \ref{app:centered_vertex_cut} to familiarize themselves with the concepts around lifted minimal centered vertex-cuts and minimal centered vertex-cuts in the line graph. We use them extensively in the proof. We employ centered vertex-cuts in $\mathcal{L}(G)$ as the central tool and assume that $G$ is bridge-free. The goal is to show that under these conditions any Type A self-intersection can be reduced by label transformations which do not create new Type A self-intersections. This gives a strongly monotonous sequence of the number of Type A self-intersections which by construction converges to $0$.
			\subsection{Reducibility of Type A self-intersections}\label{subsub:reduction_Type_A_self_int}
			To illustrate, we consider first the case of $(\mathfrak{L}_2(G),\mathcal{X},\Lambda)$ such that there is a valid labeling with all open edges being traversed by the same cycle. Pick any two self-intersection $\mathbb{X},\mathbb{Y}\in\mathcal{X}$ as depicted in Figure \ref{fig:dim_one_vertex_cut_lifted_sum_1_1_indep_res}. Then the cycle can be laid out using $\mathbb{X}$ and $\mathbb{Y}$ into an "upper" part and a "lower" part. Inverting the labels of one of self-intersections separates this cycle into two cycles and $\mathbb{X}$ as well as $\mathbb{Y}$ join the two. 
			\begin{figure}[H]
	     \centering
	     \hspace*{35pt}
	     \begin{subfigure}[H]{0.36\textwidth}
	         \centering
	         \includegraphics[width=0.9\textwidth]{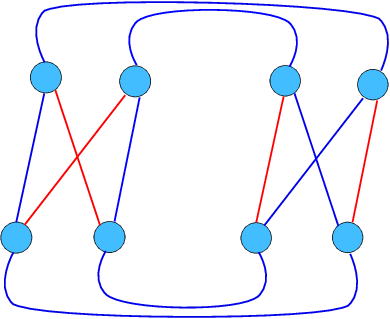}
	         \caption{}
	         \label{fig:dim_one_vertex_cut_lifted_sum_1_1_indep_res}
	     \end{subfigure}
	     \hfill
	     \begin{subfigure}[H]{0.36\textwidth}
	         \centering
	         \includegraphics[width=0.9\textwidth]{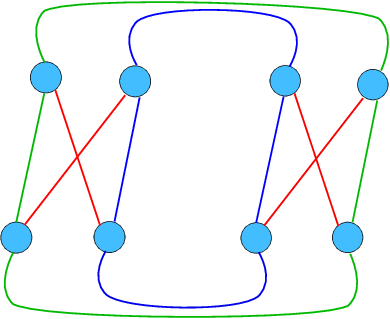}
	         \caption{}
	         \label{fig:dim_one_vertex_cut_lifted_sum_1_1_indep_two_cyc}
	     \end{subfigure}		
	     \hspace*{35pt}
	     \caption{A cycle containing two self-intersections can always be resolved into two cycles. On the left the labels of $\mathbb{X}$ and $\mathbb{Y}$ imply one cycle with self-intersections, which can be split into two joined cycles by label inversion of $\mathbb{X}$ or $\mathbb{Y}$.}
		\label{fig:dim_one_vertex_cut_lifted_full}	
	     \end{figure}
			Heuristically, to prove Proposition \ref{prop:self_ints_everywhere_imply_bridge} we look for a way to join the "upper" and "lower" part of the cycle containing some Type A self-intersection $\mathbb{X}$ and invert the necessary labels. Finally, we have to show that this does not create Type A self-intersections, which leads to a strictly monotonous process converging to $0$. This last part will be done in Subsection \ref{subsub:monotonicity_type_a_deletion}.
			\begin{figure}[H]
			\centering
			\begin{tikzcd}
				 \text{Pick Type A self-intersection }\mathbb{X}_v \arrow[d]\\
				 \text{Lift }\mathbb{V}_v\text{ to }\mathcal{V}_X \arrow[d]\\
				 \text{Choose adapted case }|\mathbb{V}_v| \text{ Figures \ref{fig:dim_two_vertex_cut_implied_cycles} to \ref{fig:dim_four_vertex_cut_lifted_full_implied_cycles}} \arrow[d]\\
				 \text{Resolve as in Figures \ref{fig:dim_two_vertex_cut_lifted_sum_1_1_indep_res} to \ref{fig:dim_four_vertex_cut_lifted_3_1_two_cyc_1}}\\ 
			\end{tikzcd}
			\caption{Proof sketch of Proposition \ref{prop:self_ints_everywhere_imply_bridge} based on reduction of type A self-intersection}
			\label{fig:resolution_description}
		\end{figure}	
		From this point on we can, due to the previous discussion around Figure \ref{fig:dim_one_vertex_cut_lifted_full}, assume that there are at least two cycles induced by the open edges in $(\mathfrak{L}_2(G),\mathcal{X},\Lambda)$. Pick any Type A self-intersection $\mathbb{X}$ in $(\mathfrak{L}_2(G),\mathcal{X},\Lambda)$ and associate it to a vertex in $v\in \mathcal{L}(G)$ as discussed in \cite{Har72}. We put the emphasis on this link by changing the name of $\mathbb{X}$ to $\mathbb{X}_v$. \par 
		Denote by $\mathbb{V}_v$ a minimal vertex cut in $\mathcal{L}(G)$ centered at $v$ and by $\mathcal{V}\subset\mathcal{X}$ the associated reduced cliques in $(\mathfrak{L}_2(G),\mathcal{X},\Lambda)$. Note that necessarily by construction $\mathbb{X}_v\in\mathcal{V}$. From Appendix \ref{app:centered_vertex_cut} and Proposition \ref{prop:joining_cycles_in_cubic_graph_double_line_graph} we can conclude that in any of the two connected components $C_1,C_2$, which the cycles are going through, the lifted vertex cut are either adjacent or are joined by a cycle. Based on the following catalog of lifted vertex cut structures we go through all possible cases. They are ordered by the size of the vertex cut $\mathbb{V}_v$. We start with the case $|\mathbb{V}_v|=2$.
			\begin{figure}[H]
	         \centering
	         \includegraphics[width=0.36\textwidth]{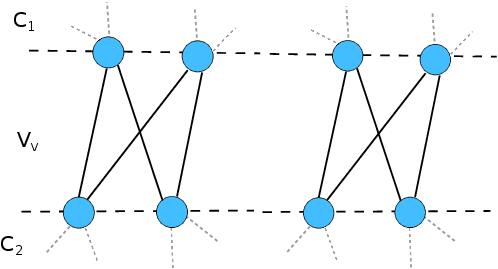}
	     \caption{All possible situations for vertex cuts $\mathbb{V}_v$ with $|\mathbb{V}_v|=2$. See Appendix \ref{app:centered_vertex_cut} for the case of both cut vertices being neighbors in $\mathcal{L}(G)$.}
		\label{fig:dim_two_vertex_cut_lifted_sum_1_1}
	\end{figure}
			We continue with the case $|\mathbb{V}_v|=3$.
			\begin{figure}[H]
	     \centering
	     \hspace*{25pt}
	     \begin{subfigure}[H]{0.45\textwidth}
	         \centering
	         \includegraphics[width=\textwidth]{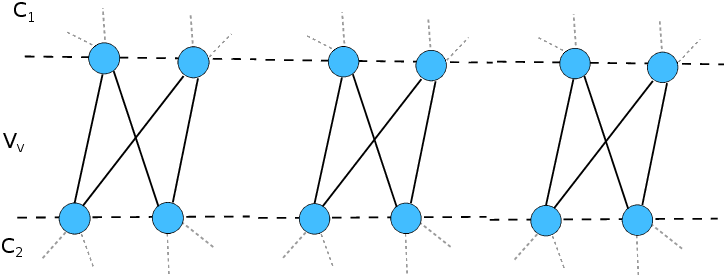}
	         \caption{}
	         \label{fig:dim_three_vertex_cut_lifted_sum_1_1_1}
	     \end{subfigure}
	     \hfill
	     \begin{subfigure}[H]{0.36\textwidth}
	         \centering
	         \includegraphics[width=\textwidth]{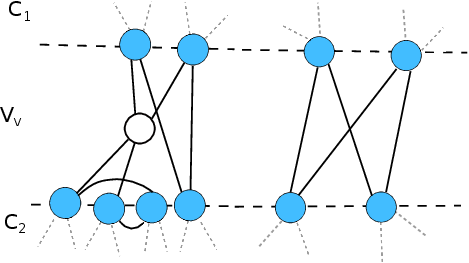}
	         \caption{}
	         \label{fig:dim_three_vertex_cut_lifted_sum_2_1}
	     \end{subfigure}
	     \hspace*{25pt}
	     \vskip\baselineskip
	     \begin{subfigure}[H]{0.24\textwidth}
	         \centering
	         \includegraphics[width=\textwidth]{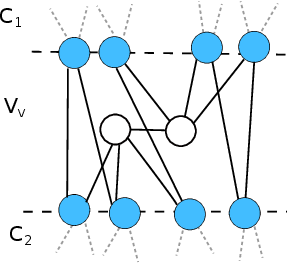}
	         \caption{}
	         \label{fig:dim_three_vertex_cut_lifted}
	     \end{subfigure}
	     \caption{All possible situations for vertex cuts $\mathbb{V}_v$ with $|\mathbb{V}_v|=3$.}
		\label{fig:dim_three_vertex_cut_lifted_full}
	\end{figure}
			Finally, the case $|\mathbb{V}_v|=4$.
			 \begin{figure}[H]
	     \centering
	     \begin{subfigure}[H]{0.45\textwidth}
	         \centering
	         \includegraphics[width=\textwidth]{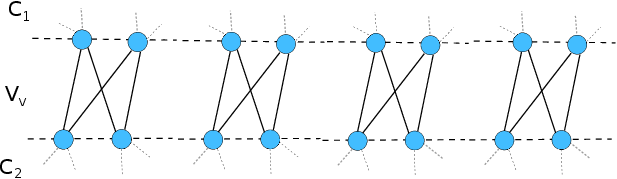}
	         \caption{}
	         \label{fig:dim_four_vertex_cut_lifted_sum_1_1_1_1}
	     \end{subfigure}
	     \hfill
	     \begin{subfigure}[H]{0.45\textwidth}
	         \centering
	         \includegraphics[width=\textwidth]{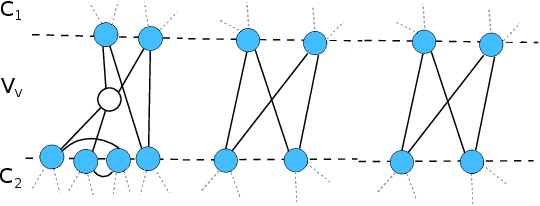}
	         \caption{}
	         \label{fig:dim_four_vertex_cut_lifted_sum_2_1_1}
	     \end{subfigure}
	     \vskip\baselineskip
	     \hspace*{35pt}
	     \begin{subfigure}[H]{0.36\textwidth}
	         \centering
	         \includegraphics[width=\textwidth]{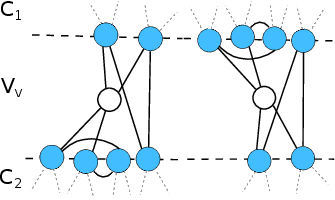}
	         \caption{}
	         \label{fig:dim_four_vertex_cut_lifted_sum_2_2}
	     \end{subfigure}
	     \hfill
	     \begin{subfigure}[H]{0.36\textwidth}
	         \centering
	         \includegraphics[width=\textwidth]{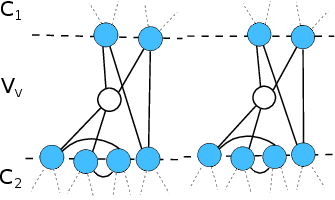}
	         \caption{}
	         \label{fig:dim_four_vertex_cut_lifted_sum_2_2_aligned}
	     \end{subfigure}
	     \hspace*{35pt}
	     \vskip\baselineskip
	     \hspace*{25pt}
	     \begin{subfigure}[H]{0.45\textwidth}
	         \centering
	         \includegraphics[width=\textwidth]{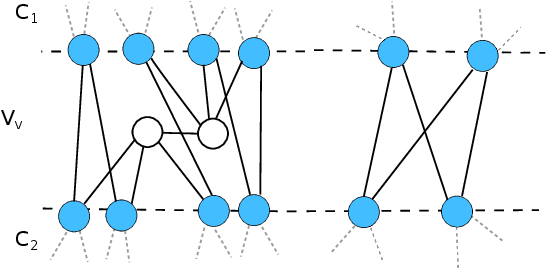}
	         \caption{}
	         \label{fig:dim_four_vertex_cut_lifted_sum_3_1}
	     \end{subfigure}
	     \hfill
	     \begin{subfigure}[H]{0.36\textwidth}
	         \centering
	         \fcolorbox{red}{white}{\includegraphics[width=\textwidth]{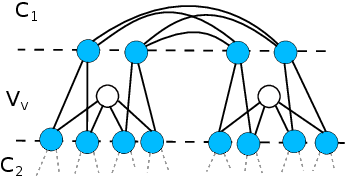}}
	         \caption{}
	         \label{fig:dim_four_vertex_cut_lifted}
	     \end{subfigure}
	     \hspace*{25pt}
	     \caption{All possible situations for vertex cuts $\mathbb{V}_v$ with $|\mathbb{V}_v|=4$.}
		\label{fig:dim_four_vertex_cut_lifted_full}
	\end{figure}
	In the case $|\mathbb{V}_v|=4$, we may always choose the case represented in Figure \ref{fig:dim_four_vertex_cut_lifted}, as a lift of a minimal vertex cut, to reduce type A self-intersections. This is due to the fact, that $G$ is assumed to be $3$ regular and, hence, its line graph is $4$ regular. Any neighborhood of any vertex $v$ in $\mathcal{L}(G)$ induces, hence, a minimal vertex cut containing $v$. This reduces in what follows the number of cases tremendously, see Figure \ref{fig:dim_four_vertex_cut_lifted_full_implied_cycles}, which only needs to focus on cycles induced by Figure \ref{fig:dim_four_vertex_cut_lifted} and not on all possible cases represented in Figure \ref{fig:dim_four_vertex_cut_lifted_full}.\par
	\subsection{Generic view of cycles associated to lifted vertex cuts} 
	In this subsection, we focus on the set of cycles $\Gamma_{\Lambda}$ in $(\mathfrak{L}_2(G),\mathcal{X},\Lambda)$, all types given Definition \ref{not:cycle_types}, and as proven in Lemma \ref{lem:open_edges_disjoint_union_of_cycles}. We can without knowledge of the local structure of the cycles, based on the lifted centered vertex cut $\mathcal{V}$, consider all cases of possible global cycle behavior. We call this the "generic" view of the cycles, which focuses only on some local structures within a lifted vertex cut. The following generic cycles can be associated to the corresponding vertex cuts. We can assume two connected components $C_1,C_2$, referring to the discussion at the end of Appendix \ref{app:centered_vertex_cut}, which is mostly due to the absence of bridges. This is a combinatorial exercise where symmetric situations with respect to the structure of the cycles are identified.
	\begin{figure}[H]
	     \centering
	     \hspace*{35pt}
	     \begin{subfigure}[H]{0.36\textwidth}
	         \centering
	         \includegraphics[width=\textwidth]{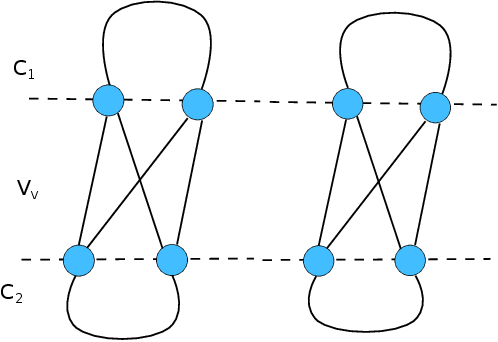}
	         \caption{}
	         \label{fig:dim_two_vertex_cut_lifted_sum_1_1_indep}
	     \end{subfigure}
	     \hfill
	     \begin{subfigure}[H]{0.36\textwidth}
	         \centering
	         \includegraphics[width=\textwidth]{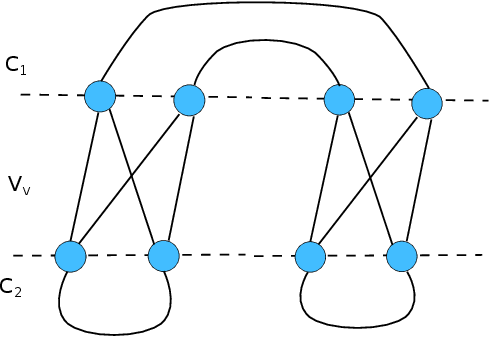}
	         \caption{}
	         \label{fig:dim_two_vertex_cut_lifted_sum_1_1_joined}
	     \end{subfigure}	
	     \hspace*{35pt}     
	     \caption{All possible generic cycles for vertex cuts $\mathbb{V}_v$ with $|\mathbb{V}_v|=2$ based on Figure \ref{fig:dim_two_vertex_cut_lifted_sum_1_1}.}
	     \label{fig:dim_two_vertex_cut_implied_cycles}
	     \end{figure}
	The following sub-cases arise under generic cycle representations. We go from Figure \ref{fig:dim_two_vertex_cut_lifted_sum_1_1} to Figure \ref{fig:dim_four_vertex_cut_lifted} through all relevant cases and their implied generic cycles.
	\begin{figure}[H]
	     \centering
	     \begin{subfigure}[H]{0.45\textwidth}
	         \centering
	         \includegraphics[width=\textwidth]{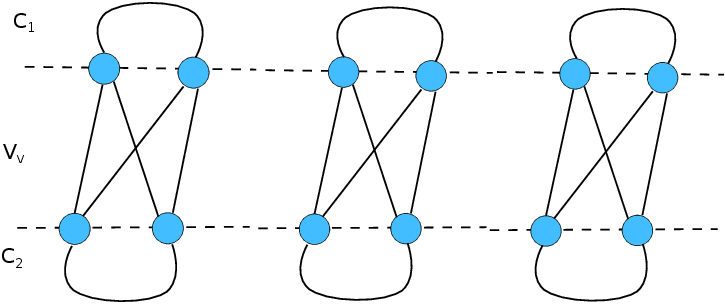}
	         \caption{}
	         \label{fig:dim_three_vertex_cut_lifted_sum_1_1_1_indep}
	     \end{subfigure}
	     \hfill
	     \begin{subfigure}[H]{0.45\textwidth}
	         \centering
	         \includegraphics[width=\textwidth]{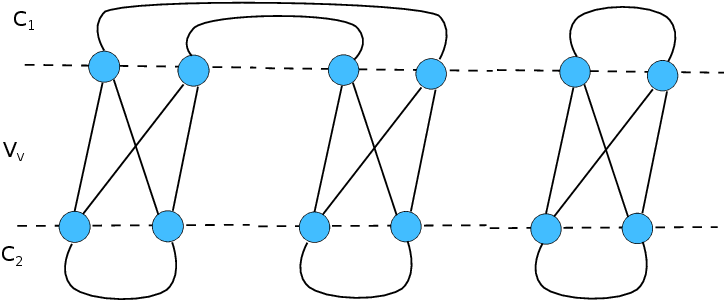}
	         \caption{}
	         \label{fig:dim_three_vertex_cut_lifted_sum_1_1_1_join_1_2}
	     \end{subfigure}
	     \vskip\baselineskip
	     \begin{subfigure}[H]{0.45\textwidth}
	         \centering
	         \includegraphics[width=\textwidth]{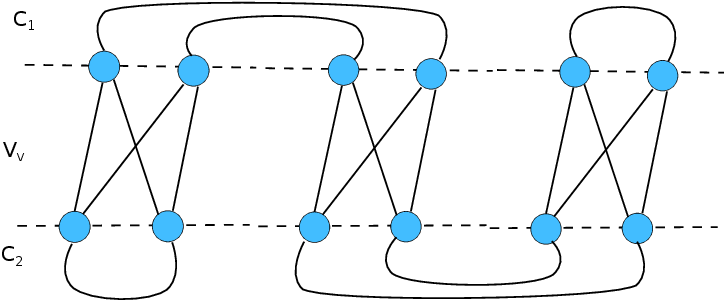}
	         \caption{}
	         \label{fig:dim_three_vertex_cut_lifted_sum_1_1_1_join_1_2_a_5_6}
	     \end{subfigure}
	     \hfill
	     \begin{subfigure}[H]{0.45\textwidth}
	         \centering
	         \includegraphics[width=\textwidth]{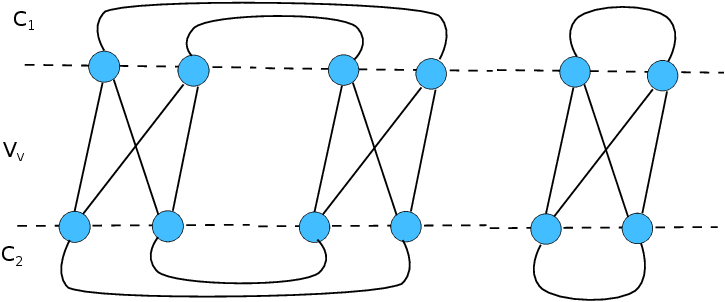}
	         \caption{}
	         \label{fig:dim_three_vertex_cut_lifted_sum_1_1_1_double_indep}
	     \end{subfigure}
	     \vskip\baselineskip
	     \begin{subfigure}[H]{0.45\textwidth}
	         \centering
	         \includegraphics[width=\textwidth]{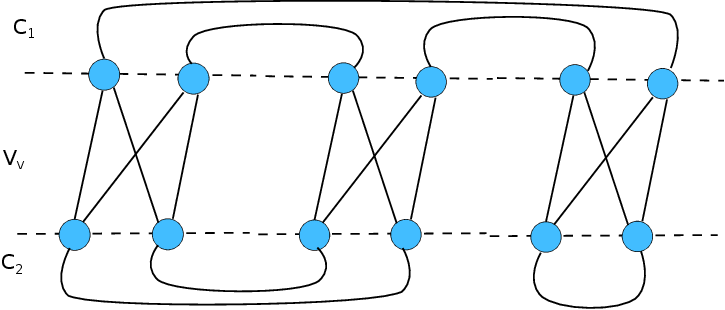}
	         \caption{}
	         \label{fig:dim_three_vertex_cut_lifted_sum_1_1_1_one_sided_double}
	     \end{subfigure}
	     \hfill
	     \begin{subfigure}[H]{0.45\textwidth}
	         \centering
	         \includegraphics[width=\textwidth]{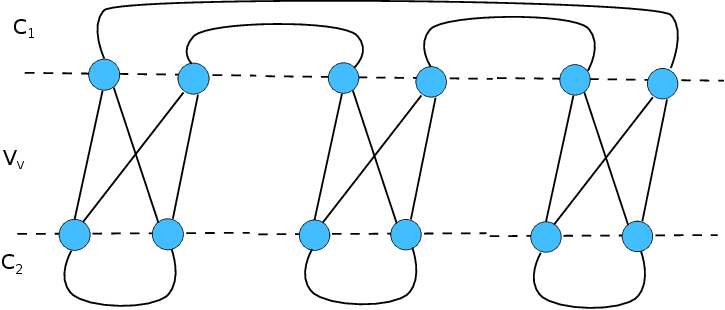}
	         \caption{}
	         \label{fig:dim_three_vertex_cut_lifted_sum_1_1_1_one_sided_indep}
	     \end{subfigure}
	     \caption{All possible generic cycles for vertex cuts $\mathbb{V}_v$ with $|\mathbb{V}_v|=3$ based on Figure \ref{fig:dim_three_vertex_cut_lifted_sum_1_1_1}.}
		\label{fig:dim_three_vertex_cut_lifted_sum_1_1_1_implied_cycles}
	\end{figure}
	\begin{figure}[H]
	     \centering
	     \hspace*{35pt}
	     \begin{subfigure}[H]{0.36\textwidth}
	         \centering
	         \includegraphics[width=\textwidth]{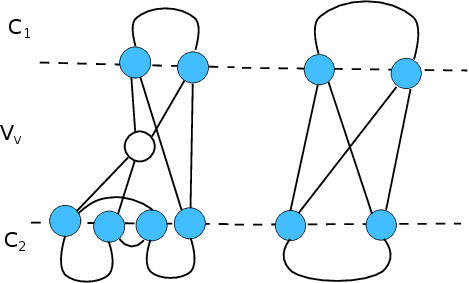}
	         \caption{}
	         \label{fig:dim_three_vertex_cut_lifted_sum_2_1_indep}
	     \end{subfigure}
	     \hfill
	     \begin{subfigure}[H]{0.36\textwidth}
	         \centering
	         \includegraphics[width=\textwidth]{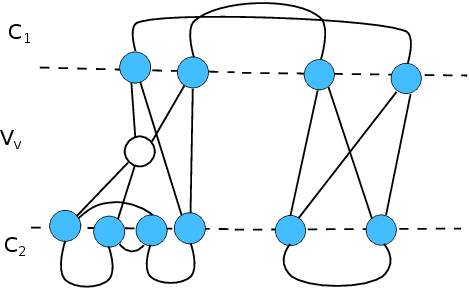}
	         \caption{}
	         \label{fig:dim_three_vertex_cut_lifted_sum_2_1_small_connect_indep}
	     \end{subfigure}
	     \hspace*{35pt}
	     \vskip\baselineskip
	     \hspace*{35pt}
	     \begin{subfigure}[H]{0.36\textwidth}
	         \centering
	         \includegraphics[width=\textwidth]{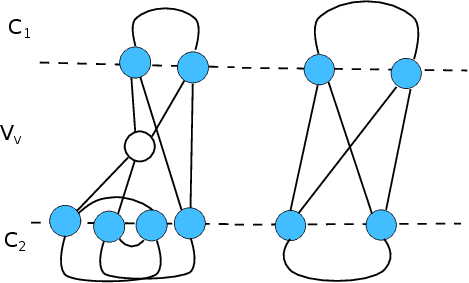}
	         \caption{}
	         \label{fig:dim_three_vertex_cut_lifted_sum_2_1_comp_connect_indep}
	     \end{subfigure}
	     \hfill
	     \begin{subfigure}[H]{0.36\textwidth}
	         \centering
	         \includegraphics[width=\textwidth]{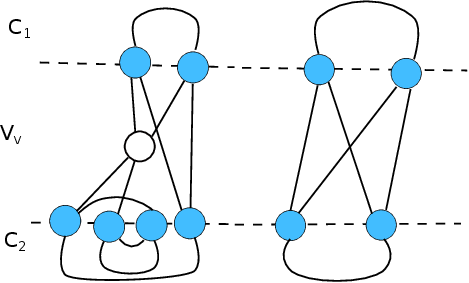}
	         \caption{}
	         \label{fig:dim_three_vertex_cut_lifted_sum_2_1_comp_connect_indep_2}
	     \end{subfigure}
	     \hspace*{35pt}
	     \vskip\baselineskip
	     \hspace*{35pt}
	     \begin{subfigure}[H]{0.36\textwidth}
	         \centering
	         \includegraphics[width=\textwidth]{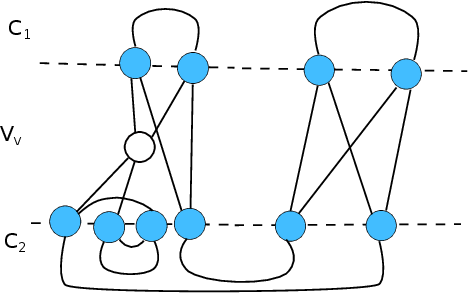}
	         \caption{}
	         \label{fig:dim_three_vertex_cut_lifted_sum_2_1_large_connect_indep_1}
	     \end{subfigure}
	     \hfill
	     \begin{subfigure}[H]{0.36\textwidth}
	         \centering
	         \includegraphics[width=\textwidth]{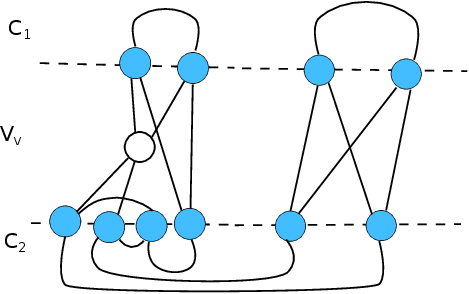}
	         \caption{}
	         \label{fig:dim_three_vertex_cut_lifted_sum_2_1_large_connect_indep_2}
	     \end{subfigure}
	     \hspace*{35pt}
	     \caption{All possible generic cycles for vertex cuts $\mathbb{V}_v$ with $|\mathbb{V}_v|=3$ based on Figure \ref{fig:dim_three_vertex_cut_lifted_sum_2_1}.}
		\label{fig:dim_three_vertex_cut_lifted_sum_2_1_implied_cycles}
	\end{figure}
	\begin{figure}[H]
	     \centering
	     \begin{subfigure}[H]{0.28\textwidth}
	         \centering
	         \includegraphics[width=\textwidth]{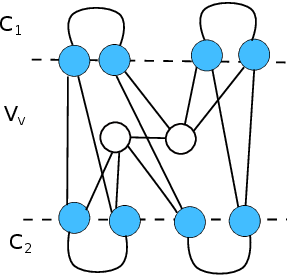}
	         \caption{}
	         \label{fig:dim_three_vertex_cut_lifted_all_loop}
	     \end{subfigure}
	     \hfill
	     \begin{subfigure}[H]{0.28\textwidth}
	         \centering
	         \includegraphics[width=\textwidth]{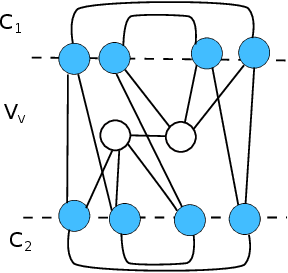}
	         \caption{}
	         \label{fig:dim_three_vertex_cut_lifted_cross_long_cross_long}
	     \end{subfigure}
	     \hfill
	     \begin{subfigure}[H]{0.28\textwidth}
	         \centering
	         \includegraphics[width=\textwidth]{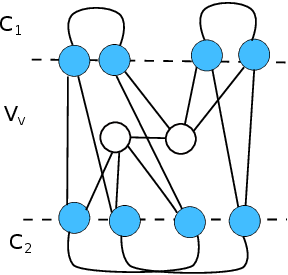}
	         \caption{}
	         \label{fig:dim_three_vertex_cut_lifted_all_loop_cross}
	     \end{subfigure}
	     \vskip\baselineskip
	     \begin{subfigure}[H]{0.28\textwidth}
	         \centering
	         \includegraphics[width=\textwidth]{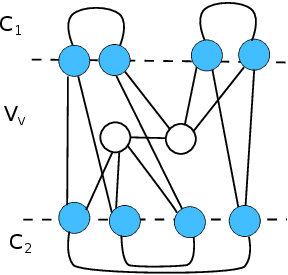}
	         \caption{}
	         \label{fig:dim_three_vertex_cut_lifted_all_loop_cross_long}
	     \end{subfigure}
	     \hfill
	     \begin{subfigure}[H]{0.28\textwidth}
	         \centering
	         \includegraphics[width=\textwidth]{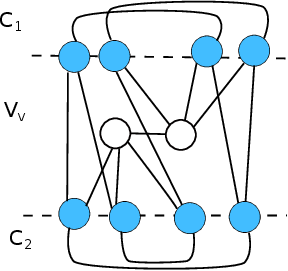}
	         \caption{}
	         \label{fig:dim_three_vertex_cut_lifted_cross_cross_long}
	     \end{subfigure}
	     \hfill
	     \begin{subfigure}[H]{0.28\textwidth}
	         \centering
	         \includegraphics[width=\textwidth]{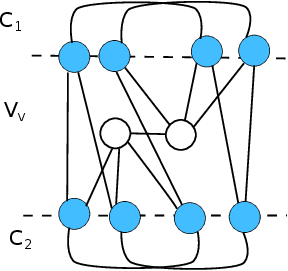}
	         \caption{}
	         \label{fig:dim_three_vertex_cut_lifted_cross_cross}
	     \end{subfigure}
	     \caption{All possible generic cycles for vertex cuts $\mathbb{V}_v$ with $|\mathbb{V}_v|=3$ based on Figure \ref{fig:dim_three_vertex_cut_lifted}.}
		\label{fig:dim_three_vertex_cut_lifted_full_implied_cycles}
	\end{figure}
	\begin{figure}[H]
	     \centering
	     \hspace*{35pt}
	     \begin{subfigure}[H]{0.36\textwidth}
	         \centering
	         \includegraphics[width=\textwidth]{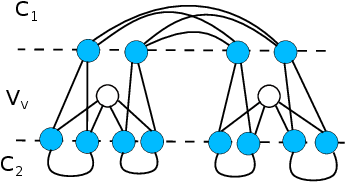}
	         \caption{}
	         \label{fig:dim_four_vertex_cut_lifted_indep}
	     \end{subfigure}
	     \hfill
	     \begin{subfigure}[H]{0.36\textwidth}
	         \centering
	         \includegraphics[width=\textwidth]{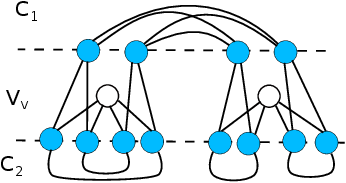}
	         \caption{}
	         \label{fig:dim_four_vertex_cut_lifted_indep_2}
	     \end{subfigure}
	     \hspace*{35pt}
	     \vskip\baselineskip
	     \hspace*{35pt}
	     \begin{subfigure}[H]{0.36\textwidth}
	         \centering
	         \includegraphics[width=\textwidth]{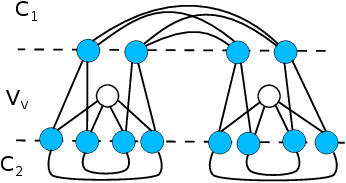}
	         \caption{}
	         \label{fig:dim_four_vertex_cut_lifted_2_2}
	     \end{subfigure}
	     \hfill
	     \begin{subfigure}[H]{0.36\textwidth}
	         \centering
	         \includegraphics[width=\textwidth]{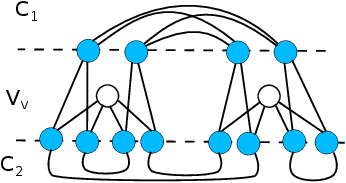}
	         \caption{}
	         \label{fig:dim_four_vertex_cut_lifted_3_1}
	     \end{subfigure}
	     \hspace*{35pt}
	     \caption{All possible generic cycles for vertex cuts $\mathbb{V}_v$ with $|\mathbb{V}_v|=4$ based on Figure \ref{fig:dim_four_vertex_cut_lifted}.}
		\label{fig:dim_four_vertex_cut_lifted_full_implied_cycles}
	\end{figure}
		\subsection{Type A resolution through $\mathfrak{G}_{\Lambda}$} 
		We approach now the reduction of the Type A self-intersections. We employ the intuition developed at the beginning of the proof to connect the "upper" part and "lower" part of the cycle containing the Type A self-intersection which we want to reduce. This connection is always possible due to the properties of $\mathfrak{G}_{\Lambda}$ discussed in Lemma \ref{lem:graph_of_cycles}. We go through all cases presented in Figures \ref{fig:dim_two_vertex_cut_implied_cycles} to \ref{fig:dim_four_vertex_cut_lifted_full_implied_cycles}. In the left column we present the case containing Type A self-intersections and in the right column the resolved variant, using a connecting cycle, which we obtain from Lemma \ref{lem:graph_of_cycles}. In the cases presented in Figures \ref{fig:dim_three_vertex_cut_lifted_cross_long_cross_long_two_cyc}, \ref{fig:dim_three_vertex_cut_lifted_cross_cross_long_two_cyc} and \ref{fig:dim_three_vertex_cut_lifted_cross_cross_two_cyc}, where all self-intersections in the lifted centered vertex cut are Type B, we show only a resolved version. 
			\begin{figure}[H]
	     \centering
	     \hspace*{35pt}
	     \begin{subfigure}[H]{0.36\textwidth}
	         \centering
	         \includegraphics[width=\textwidth]{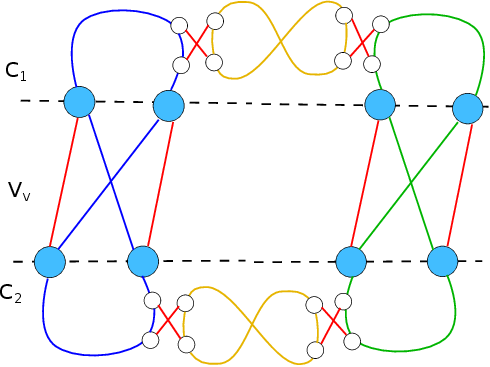}
	         \caption{}
	         \label{fig:dim_two_vertex_cut_lifted_sum_1_1_indep_res}
	     \end{subfigure}
	     \hfill
	     \begin{subfigure}[H]{0.36\textwidth}
	         \centering
	         \includegraphics[width=\textwidth]{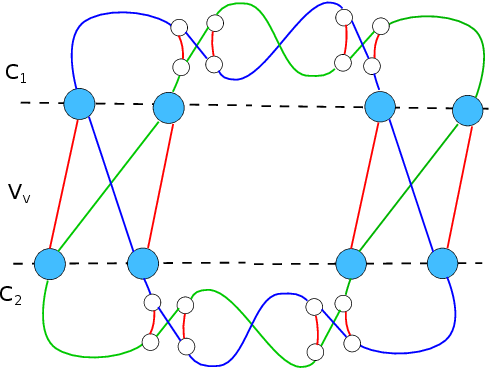}
	         \caption{}
	         \label{fig:dim_two_vertex_cut_lifted_sum_1_1_indep_two_cyc}
	     \end{subfigure}
	     \hspace*{35pt}
	     \vskip\baselineskip
	     \hspace*{35pt}
	     \begin{subfigure}[H]{0.36\textwidth}
	         \centering
	         \includegraphics[width=\textwidth]{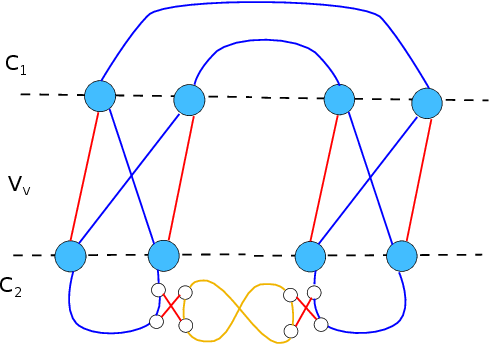}
	         \caption{}
	         \label{fig:dim_two_vertex_cut_lifted_sum_1_1_joined_res}
	     \end{subfigure}
	     \hfill
	     \begin{subfigure}[H]{0.36\textwidth}
	         \centering
	         \includegraphics[width=\textwidth]{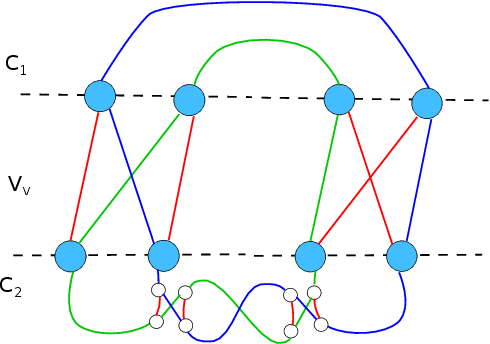}
	         \caption{}
	         \label{fig:dim_two_vertex_cut_lifted_sum_1_1_joined_two_cyc}
	     \end{subfigure}
	     \hspace*{35pt}
	     \caption{Resolutions of Figure \ref{fig:dim_two_vertex_cut_implied_cycles}.}
	     \label{fig:dim_two_vertex_cut_lifted_sum_1_1_resolved}
	     \end{figure}
	     Note that the all cases in Figure \ref{fig:dim_three_vertex_cut_lifted_sum_1_1_1_implied_cycles} can be resolved as is done in Figure \ref{fig:dim_two_vertex_cut_lifted_sum_1_1_resolved} no matter which reduced clique in the lift of $\mathbb{V}_v$ is considered to be the lift of $v$. Moreover, in all cases the existence of the necessary cycles in the lifts of $C_1$ and $C_2$, is assured by Proposition \ref{prop:joining_cycles_in_cubic_graph_double_line_graph} and the fact that traversals between the lifts of $C_1$ and $C_2$ are only possible through the lift of $\mathbb{V}_v$. But the traversals are fixed in each figure, which implies the existence of the necessary structures in $C_1$ and $C_2$, respectively.\par 
	     We continue with the discussion of the cases shown in Figure \ref{fig:dim_three_vertex_cut_lifted_sum_2_1_implied_cycles}. We want to emphasize, that the goal is to reduce Type A self-intersections within the lifted vertex cut $\mathbb{V}_v$. If there are still present in the designs, they can be reduced by another iteration of the constructions within the respective case of $|\mathbb{V}_v|$. For example, the resolution of Figure \ref{fig:dim_four_vertex_cut_lifted_indep_2} is a combination of Figures \ref{fig:dim_four_vertex_cut_lifted_indep_res} and \ref{fig:dim_four_vertex_cut_lifted_2_2_two_cyc}. We only need to make sure, subsequently, that no new Type A self-intersection are created. This constitutes the last part of the proof in Subsection \ref{subsub:monotonicity_type_a_deletion}.
	     \begin{figure}[H]
	     \centering
	     \hspace*{35pt}
	     \begin{subfigure}[H]{0.36\textwidth}
	         \centering
	         \includegraphics[width=\textwidth]{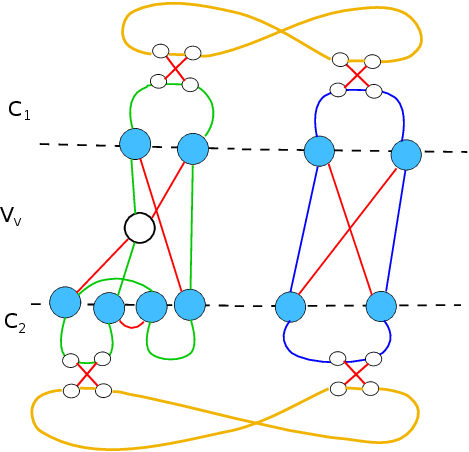}
	         \caption{}
	         \label{fig:dim_three_vertex_cut_lifted_sum_2_1_indep_res_2}
	     \end{subfigure}
	     \hfill
	     \begin{subfigure}[H]{0.36\textwidth}
	         \centering
	         \includegraphics[width=\textwidth]{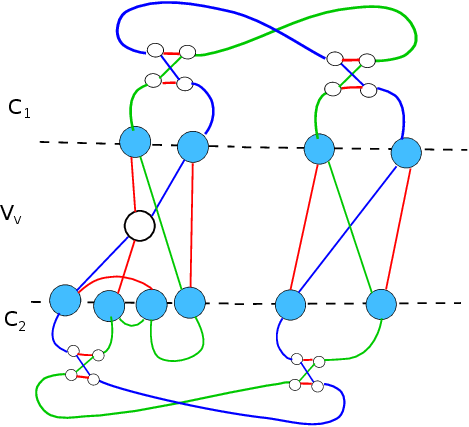}
	         \caption{}
	         \label{fig:dim_three_vertex_cut_lifted_sum_2_1_indep_two_cyc_2}
	     \end{subfigure}
	     \hspace*{35pt}
	     \vskip\baselineskip
	     \hspace*{35pt}
	     \begin{subfigure}[H]{0.36\textwidth}
	         \centering
	         \includegraphics[width=\textwidth]{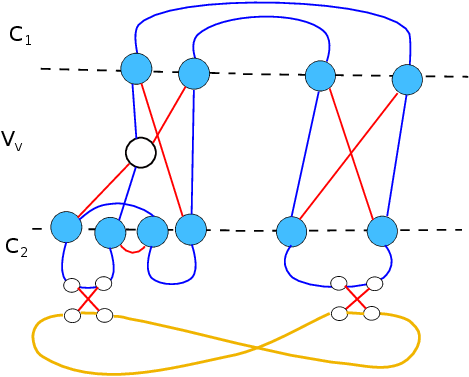}
	         \caption{}
	         \label{fig:dim_three_vertex_cut_lifted_sum_2_1_small_connect_indep_res}
	     \end{subfigure}
	     \hfill
	     \begin{subfigure}[H]{0.36\textwidth}
	         \centering
	         \includegraphics[width=\textwidth]{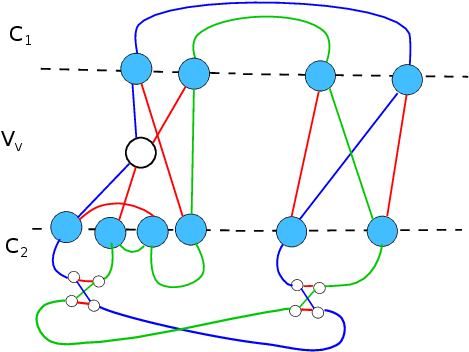}
	         \caption{}
	         \label{fig:dim_three_vertex_cut_lifted_sum_2_1_small_connect_indep_two_cyc}
	     \end{subfigure}
	     \hspace*{35pt}
	     \vskip\baselineskip
	     \hspace*{35pt}
	     \begin{subfigure}[H]{0.36\textwidth}
	         \centering
	         \includegraphics[width=\textwidth]{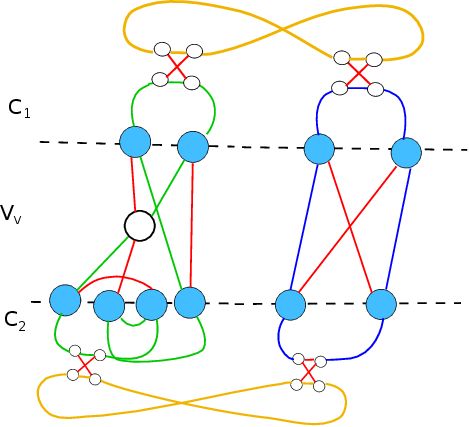}
	         \caption{}
	         \label{fig:dim_three_vertex_cut_lifted_sum_2_1_comp_connect_indep_res}
	     \end{subfigure}
	     \hfill
	     \begin{subfigure}[H]{0.36\textwidth}
	         \centering
	         \includegraphics[width=\textwidth]{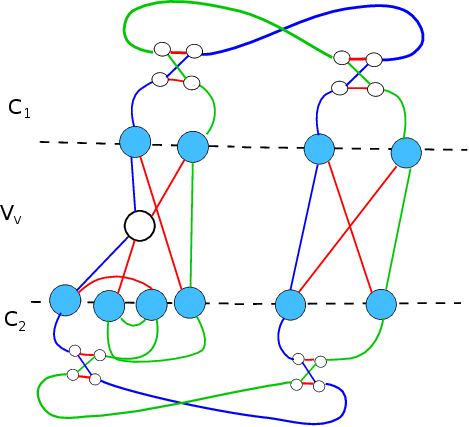}
	         \caption{}
	         \label{fig:dim_three_vertex_cut_lifted_sum_2_1_comp_connect_indep_two_cyc}
	     \end{subfigure}
	     \hspace*{35pt}
	     \caption{Resolutions of Figures \ref{fig:dim_three_vertex_cut_lifted_sum_2_1_indep} to \ref{fig:dim_three_vertex_cut_lifted_sum_2_1_comp_connect_indep}. }
	     \label{fig:dim_three_vertex_cut_lifted_full_implied_cycles_1_resolved}
	     \end{figure}
	     \begin{figure}[H]
	     \centering
	     \hspace*{35pt}
	     \begin{subfigure}[H]{0.36\textwidth}
	         \centering
	         \includegraphics[width=\textwidth]{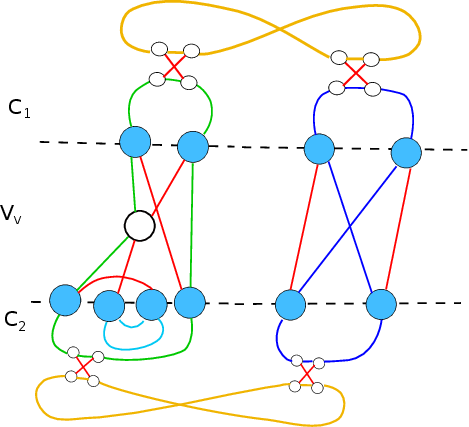}
	         \caption{}
	         \label{fig:dim_three_vertex_cut_lifted_sum_2_1_comp_connect_indep_2_res}
	     \end{subfigure}
	     \hfill
	     \begin{subfigure}[H]{0.36\textwidth}
	         \centering
	         \includegraphics[width=\textwidth]{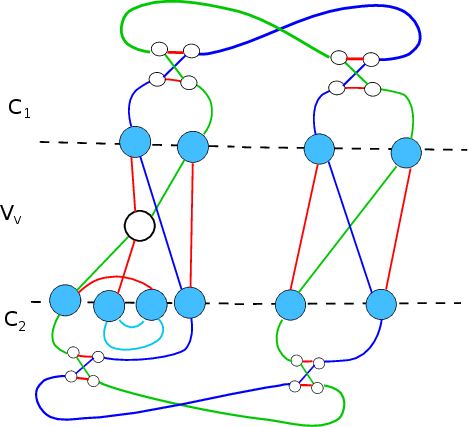}
	         \caption{}
	         \label{fig:dim_three_vertex_cut_lifted_sum_2_1_comp_connect_indep_2_two_cyc}
	     \end{subfigure}
	     \hspace*{35pt}
	     \vskip\baselineskip
	     \hspace*{35pt}
	     \begin{subfigure}[H]{0.36\textwidth}
	         \centering
	         \includegraphics[width=\textwidth]{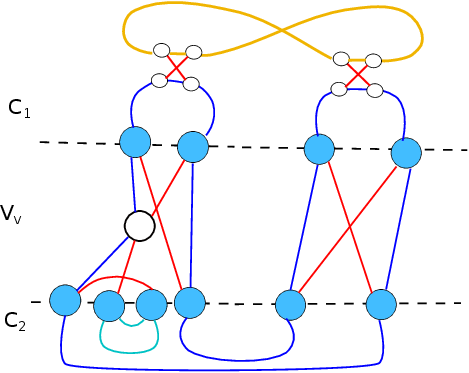}
	         \caption{}
	         \label{fig:dim_three_vertex_cut_lifted_sum_2_1_large_connect_indep_1_res}
	     \end{subfigure}
	     \hfill
	     \begin{subfigure}[H]{0.36\textwidth}
	         \centering
	         \includegraphics[width=\textwidth]{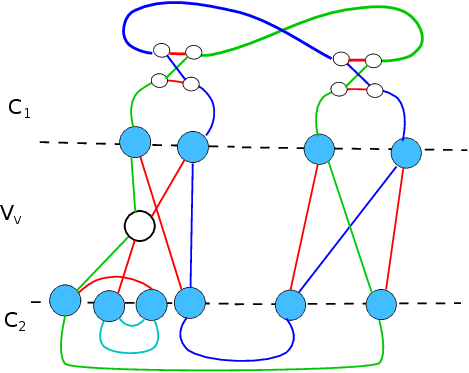}
	         \caption{}
	         \label{fig:dim_three_vertex_cut_lifted_sum_2_1_large_connect_indep_1_two_cyc}
	     \end{subfigure}
	     \hspace*{35pt}
	     \vskip\baselineskip
	     \hspace*{35pt}
	     \begin{subfigure}[H]{0.36\textwidth}
	         \centering
	         \includegraphics[width=\textwidth]{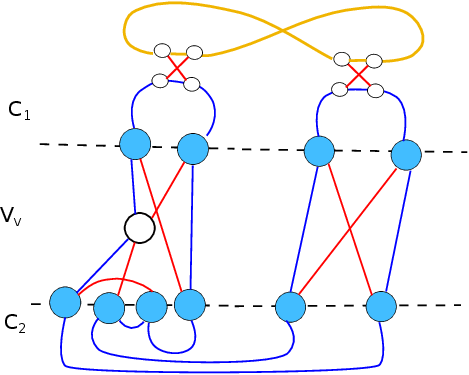}
	         \caption{}
	         \label{fig:dim_three_vertex_cut_lifted_sum_2_1_large_connect_indep_2_res}
	     \end{subfigure}
	     \hfill
	     \begin{subfigure}[H]{0.36\textwidth}
	         \centering
	         \includegraphics[width=\textwidth]{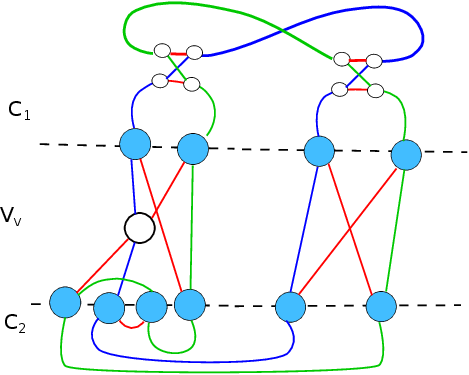}
	         \caption{}
	         \label{fig:dim_three_vertex_cut_lifted_sum_2_1_large_connect_indep_2_two_cyc}
	     \end{subfigure}
	     \hspace*{35pt}
	     \caption{Resolutions of Figures \ref{fig:dim_three_vertex_cut_lifted_sum_2_1_comp_connect_indep_2} to \ref{fig:dim_three_vertex_cut_lifted_sum_2_1_large_connect_indep_2}. }
	     \label{fig:dim_three_vertex_cut_lifted_full_implied_cycles_1_resolved_1}
	     \end{figure}
	     The next step will focus on the cases shown in Figure \ref{fig:dim_three_vertex_cut_lifted_full_implied_cycles}.
	     \begin{figure}[H]
	     \centering
	     \hspace*{35pt}
	     \begin{subfigure}[H]{0.36\textwidth}
	         \centering
	         \includegraphics[width=\textwidth]{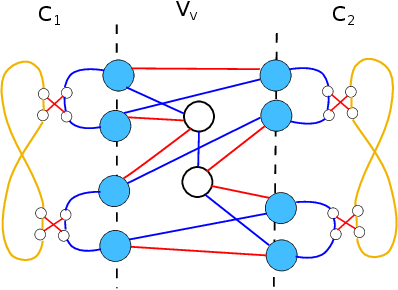}
	         \caption{}
	         \label{fig:dim_three_vertex_cut_lifted_all_loop_res_1}
	     \end{subfigure}
	     \hfill
	     \begin{subfigure}[H]{0.36\textwidth}
	         \centering
	         \includegraphics[width=\textwidth]{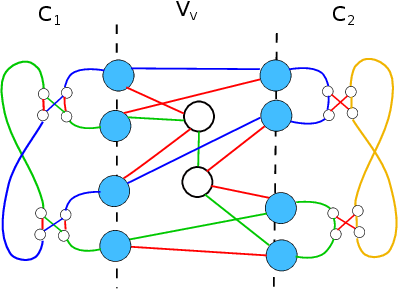}
	         \caption{}
	         \label{fig:dim_three_vertex_cut_lifted_all_loop_two_cyc_1}
	     \end{subfigure}
	     \hspace*{35pt}
	     \vskip\baselineskip
	     \begin{subfigure}[H]{0.28\textwidth}
	         \centering
	         \includegraphics[width=\textwidth]{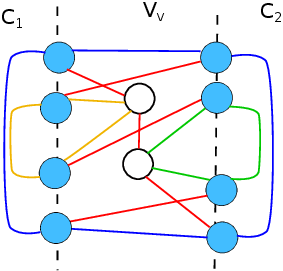}
	         \caption{}
	         \label{fig:dim_three_vertex_cut_lifted_cross_long_cross_long_two_cyc}
	     \end{subfigure}
	     \vskip\baselineskip
	     \hspace*{35pt}
	     \begin{subfigure}[H]{0.36\textwidth}
	         \centering
	         \includegraphics[width=\textwidth]{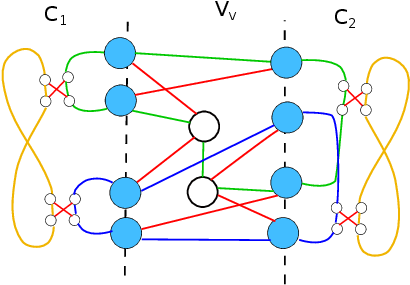}
	         \caption{}
	         \label{fig:dim_three_vertex_cut_lifted_all_loop_cross_res}
	     \end{subfigure}
	     \hfill
	     \begin{subfigure}[H]{0.36\textwidth}
	         \centering
	         \includegraphics[width=\textwidth]{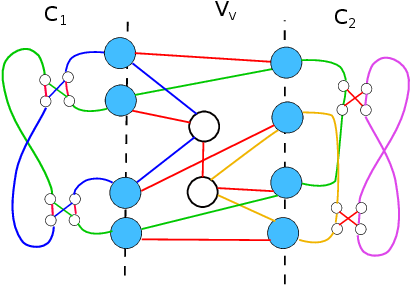}
	         \caption{}
	         \label{fig:dim_three_vertex_cut_lifted_all_loop_cross_two_cyc}
	     \end{subfigure}
	     \hspace*{35pt}
	     \caption{Resolutions of Figures \ref{fig:dim_three_vertex_cut_lifted_all_loop} to \ref{fig:dim_three_vertex_cut_lifted_all_loop_cross}. Some are turned by 90° for aesthetic reasons. Note that in Figure \ref{fig:dim_three_vertex_cut_lifted_cross_long_cross_long_two_cyc} there is a labeling within the lifted vertex cut such that any self-intersection separate into independent cycles.}
	     \label{fig:dim_three_vertex_cut_lifted_full_implied_cycles_2_resolved}
	     \end{figure}
	     \begin{figure}[H]
	     \centering
	     \hspace*{35pt}
	     \begin{subfigure}[H]{0.36\textwidth}
	         \centering
	         \includegraphics[width=\textwidth]{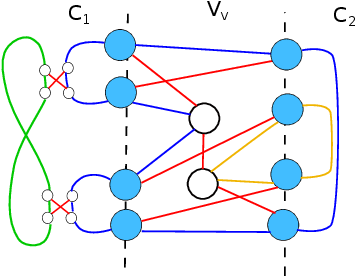}
	         \caption{}
	         \label{fig:dim_three_vertex_cut_lifted_all_loop_cross_long_res}
	     \end{subfigure}
	     \hfill
	     \begin{subfigure}[H]{0.36\textwidth}
	         \centering
	         \includegraphics[width=\textwidth]{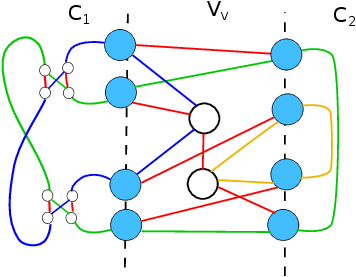}
	         \caption{}
	         \label{fig:dim_three_vertex_cut_lifted_all_loop_cross_long_two_cyc}
	     \end{subfigure}
	     \hspace*{35pt}
	     \vskip\baselineskip
	     \hspace*{35pt}
	     \begin{subfigure}[H]{0.28\textwidth}
	         \centering
	         \includegraphics[width=\textwidth]{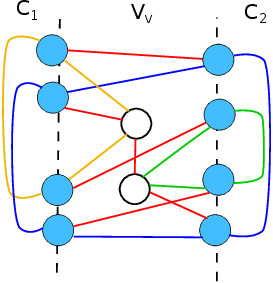}
	         \caption{}
	         \label{fig:dim_three_vertex_cut_lifted_cross_cross_long_two_cyc}
	     \end{subfigure}
	     \hspace*{35pt}
	     \vskip\baselineskip
	     \hspace*{35pt}
	     \begin{subfigure}[H]{0.28\textwidth}
	         \centering
	         \includegraphics[width=\textwidth]{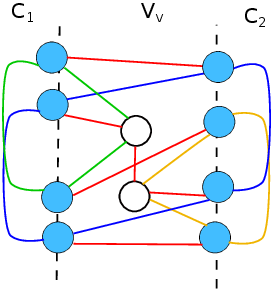}
	         \caption{}
	         \label{fig:dim_three_vertex_cut_lifted_cross_cross_two_cyc}
	     \end{subfigure}
	     \hspace*{35pt}
	     \caption{Resolutions of Figures \ref{fig:dim_three_vertex_cut_lifted_all_loop_cross_long} to \ref{fig:dim_three_vertex_cut_lifted_cross_cross}. Also in Figures \ref{fig:dim_three_vertex_cut_lifted_cross_cross_long_two_cyc} and \ref{fig:dim_three_vertex_cut_lifted_cross_cross_two_cyc} there is a labeling within the lifted vertex cut such that any self-intersection separate into independent cycles.}
	     \label{fig:dim_three_vertex_cut_lifted_full_implied_cycles_2_resolved}
	     \end{figure}
	     We conclude the entire discussion with considerations in Figure \ref{fig:dim_four_vertex_cut_lifted_full_implied_cycles}.
	     \begin{figure}[H]
	     \centering
	     \hspace*{35pt}
	     \begin{subfigure}[H]{0.36\textwidth}
	         \centering
	         \includegraphics[width=\textwidth]{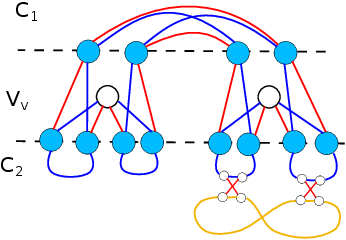}
	         \caption{}
	         \label{fig:dim_four_vertex_cut_lifted_indep_res}
	     \end{subfigure}
	     \hfill
	     \begin{subfigure}[H]{0.36\textwidth}
	         \centering
	         \includegraphics[width=\textwidth]{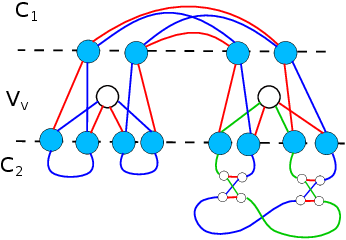}
	         \caption{}
	         \label{fig:dim_four_vertex_cut_lifted_indep_two_cyc}
	     \end{subfigure}
	     \hspace*{35pt}
	     \vskip\baselineskip
	     \hspace*{35pt}
	     \begin{subfigure}[H]{0.36\textwidth}
	         \centering
	         \includegraphics[width=\textwidth]{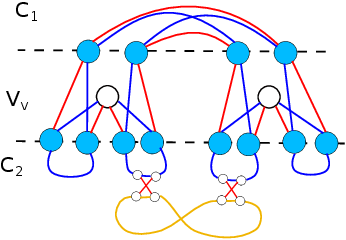}
	         \caption{}
	         \label{fig:dim_four_vertex_cut_lifted_indep_res_1}
	     \end{subfigure}
	     \hfill
	     \begin{subfigure}[H]{0.36\textwidth}
	         \centering
	         \includegraphics[width=\textwidth]{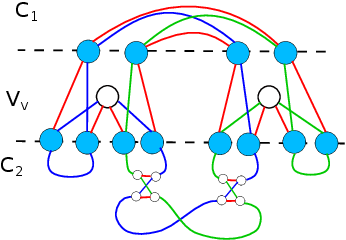}
	         \caption{}
	         \label{fig:dim_four_vertex_cut_lifted_indep_two_cyc_1}
	     \end{subfigure}
	     \hspace*{35pt}
	     \vskip\baselineskip
	     \begin{subfigure}[H]{0.36\textwidth}
	         \centering
	         \includegraphics[width=\textwidth]{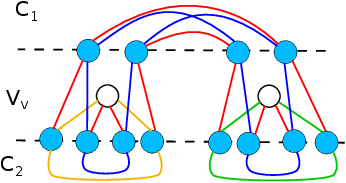}
	         \caption{}
	         \label{fig:dim_four_vertex_cut_lifted_2_2_two_cyc}
	     \end{subfigure}
	     \vskip\baselineskip
	     \hspace*{35pt}
	     \begin{subfigure}[H]{0.36\textwidth}
	         \centering
	         \includegraphics[width=\textwidth]{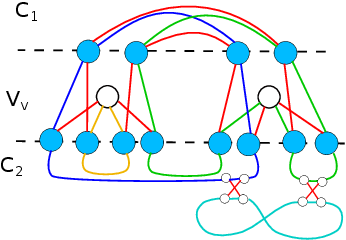}
	         \caption{}
	         \label{fig:dim_four_vertex_cut_lifted_3_1_res_1}
	     \end{subfigure}
	     \hfill
	     \begin{subfigure}[H]{0.36\textwidth}
	         \centering
	         \includegraphics[width=\textwidth]{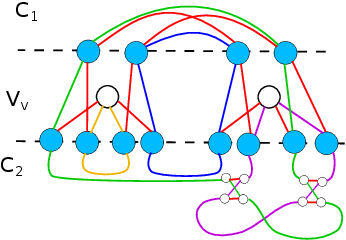}
	         \caption{}
	         \label{fig:dim_four_vertex_cut_lifted_3_1_two_cyc_1}
	     \end{subfigure}
	     \hspace*{35pt}
	     \caption{Resolutions of Figure \ref{fig:dim_four_vertex_cut_lifted_full_implied_cycles}. Note that in Figure \ref{fig:dim_four_vertex_cut_lifted_2_2_two_cyc} any self-intersection within the lifted vertex cut is of Type B independently of the label structure in the lifted vertex cut. Figures \ref{fig:dim_four_vertex_cut_lifted_indep_two_cyc} and \ref{fig:dim_four_vertex_cut_lifted_indep_two_cyc_1} can be completely reduced, i.e., no more Type A self-intersections, by applying the same construction again.}
	     \label{fig:dim_four_vertex_cut_lifted_full_implied_cycles_resolved}
	     \end{figure}
	     This concludes the list of cases which may arise for Type A self-intersections under the conditions of Proposition \ref{prop:self_ints_everywhere_imply_bridge} and the assumption that a bridge exists. In all cases, we can reduce the present Type A self-intersections. Next we show that no are created in the process.
			\subsection{Monotonicity of number of Type A self-intersections}\label{subsub:monotonicity_type_a_deletion}
			So far, we have been occupied with the question of removing Type A intersections. Naturally, we also need to know if and how they might be created, pushing us away from the desired state where no Type A intersections are present. In particular, reducibility of one reduced clique is not sufficient to ensure that no Type A intersections are created by the associated vector of label inversions. We are in particular interested in the cases from Proposition \ref{prop:joining_cycles_in_cubic_graph_double_line_graph} and the resolution of Type A self-intersections as discussed in Subsection \ref{subsub:reduction_Type_A_self_int}. We switch our focus from the lifted vertex cuts to the connecting cycles in $\Gamma_{\Lambda}$, which only appeared in generic form.
			\subsubsection{Choice of connecting cycles through $\mathfrak{G}_{\Lambda}$}\label{sub:conn_cycles_GLambda}
				Given $(\mathfrak{L}_2(G),\mathcal{X},\Lambda)$ and two lifted vertex cuts $\mathbb{X}_v,\mathbb{X}_w\in\mathcal{X}$ resulting in Type A self-intersections, we connected the cycles $\gamma_1,\gamma_2$ containing $\mathbb{X}_v,\mathbb{X}_w$, respectively using Lemma \ref{lem:graph_of_cycles} and Proposition \ref{prop:joining_cycles_in_cubic_graph_double_line_graph}. We can indeed, say more about the possible choices of generic cycles by Lemma \ref{lem:graph_of_cycles}. Consider $\gamma_1,\gamma_2\in\Gamma_{\Lambda}$ as vertices in $\mathfrak{G}_{\Lambda}$. Since $\mathfrak{G}_{\Lambda}$ is simple and connected, there is a shortest path $(\gamma_1,\phi_1,...,\phi_k,\gamma_2)$ in $\mathfrak{G}_{\Lambda}$ such that the vertex induced subgraph of $\mathfrak{G}_{\Lambda}$ is path. For $k=3$ one can visualize the situation in $C_1$ as represented in Figure \ref{fig:chain_cycles_to_connect}. This can be canonically extended to any $k$. The generic view in Figure \ref{fig:chain_cycles_to_connect} is justified due to a planar layout and the fact that the internal structure of the connecting cycles, i.e., Type A self-intersections, are not relevant at this point. They cannot impact the monotonicity because they are already in the "bad" state. 
			\begin{figure}[H]
			     \centering
				 \includegraphics[width=0.45\textwidth]{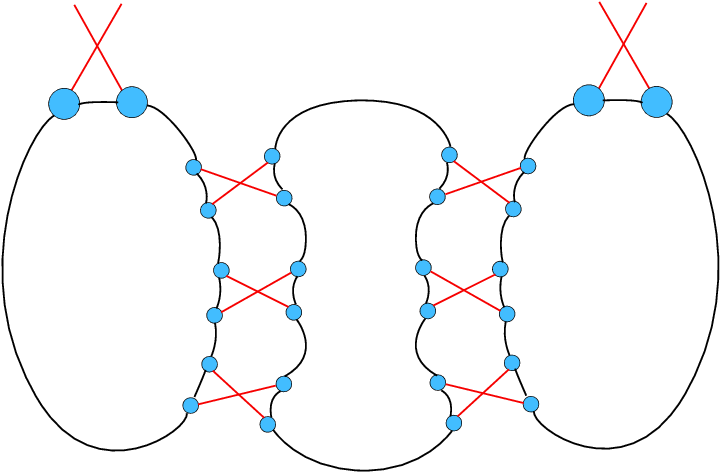}
			     \caption{Generic view of a connecting family of cycles consisting of $3$ adjacent cycles. The cycles containing the Type A self-intersections $\mathbb{X}_v,\mathbb{X}_w$ are represented by the joining reduced cliques in the upper-left and upper-right of the picture. Here it is assumed that exactly $3$ joining reduced cliques exist between each pair $\phi_i,\phi_{i+1}$ but the number can be arbitrary.}
			     \label{fig:chain_cycles_to_connect}
			\end{figure}
			A connecting cycle can then be found by inverting the labels of exactly one joining reduced clique between any two cycles. Note that we can assume that only joining reduced cliques cliques exist between $\phi_i,\phi_{i+1}$ due to the shortest path argument in $\mathfrak{G}_{\Lambda}$. The resulting situation is depicted in Figure \ref{fig:chain_cycles_to_connect_res} for $k=3$, assuming two resolved Type A self-intersections. Remark that any other joining reduced clique, which was not subject to a label inversion, stays a joining reduced clique, but now between two different cycles. No Type A self-intersection is created by inverting the labels of exactly one joining reduced clique between any $\phi_i,\phi_{i+1}$.
			\begin{figure}[H]
			     \centering
				 \includegraphics[width=0.45\textwidth]{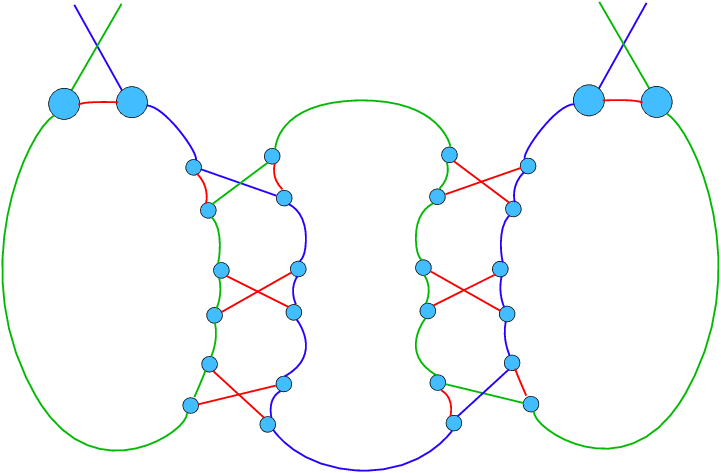}
			     \caption{Cycle configuration after resolution of $\mathbb{X}_v,\mathbb{X}_w$ in line with Figures \ref{fig:dim_two_vertex_cut_lifted_sum_1_1_resolved} to \ref{fig:dim_four_vertex_cut_lifted_full_implied_cycles_resolved} and label inversion of exactly one joining reduced clique between $\phi_i,\phi_{i+1}$. Leads to situation presented for the resolved cases in Figures \ref{fig:dim_two_vertex_cut_lifted_sum_1_1_resolved} to \ref{fig:dim_four_vertex_cut_lifted_full_implied_cycles_resolved} without increasing number of Type A self-intersections.}
			     \label{fig:chain_cycles_to_connect_res}
			\end{figure}
			Inductively in $k$, based on the case $k=3$ from Figure \ref{fig:chain_cycles_to_connect_res}, adding additional $\phi$ to the path $(\gamma_1,\phi_1,...,\phi_k,\gamma_2)$ does not present any problems if one proceeds to invert the labels of exactly one joining reduced clique between any pair $\phi_i,\phi_{i+1}$. This concludes the correct choice of label inversions from Proposition \ref{prop:joining_cycles_in_cubic_graph_double_line_graph} to avoid creation of new Type A self-intersections between the connecting cycles. It remains now to show that also between $\gamma_1$ or $\gamma_2$ and the connecting cycle, new Type A self-intersections can always be avoided.
			\subsubsection{Interaction of Type B intersections with generic connnecting cycles}\label{sub:typeB_generic_cycle}
				The second case, which we have to take into account, is the creation of Type A self-intersections due to Type B self-interactions between the connecting cycles and the cycles containing the lifted vertex cuts. We illustrate the situation based on Figure \ref{fig:type_B_type_A_conversion}. 
			\begin{figure}[H]
			     \centering
				 \includegraphics[width=0.45\textwidth]{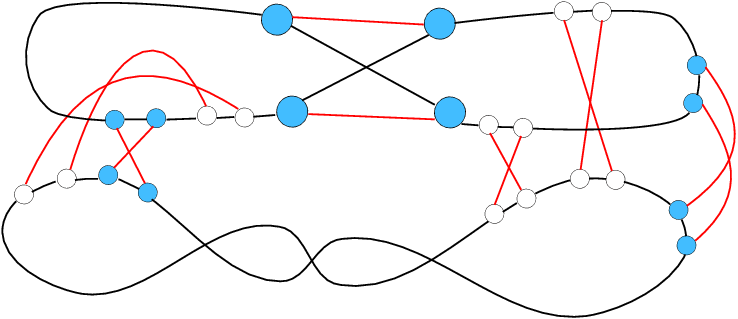}
			     \caption{Generic view of a Type A self-intersection where a cycle connects the "upper" and "lower" part of the cycle relative to the Type A self-intersection. Blue node reduced cliques represent the joining reduced cliques used in Figures \ref{fig:dim_two_vertex_cut_lifted_sum_1_1_resolved}-\ref{fig:dim_four_vertex_cut_lifted_full_implied_cycles_resolved}. }
			     \label{fig:type_B_type_A_conversion}
			\end{figure}
			In Figure \ref{fig:type_B_type_A_conversion_colored} one can see that a connecting cycle and the cycle containing the Type A self-intersection which we want to reduce but which is not transformed via a label inversion $\mathcal{F}_{\mathbb{X}}$. In fact, the potential creation of Type A self-intersection depends fully on the position of the Type B intersections within the final merged cycle. There are three possibilities up to symmetry around the lifted vertex cut, independent of the number of Type B self-intersections joining the cycle $\gamma$ and connecting cycle $\gamma'$. They are shown in Figure \ref{fig:type_B_type_A_conversion_colored} and the corresponding resolution without the creation of additional Type A self-intersections in Figure \ref{fig:type_B_type_A_conversion_connected}. 
			\begin{figure}[H]
			     \centering
				 \includegraphics[width=0.45\textwidth]{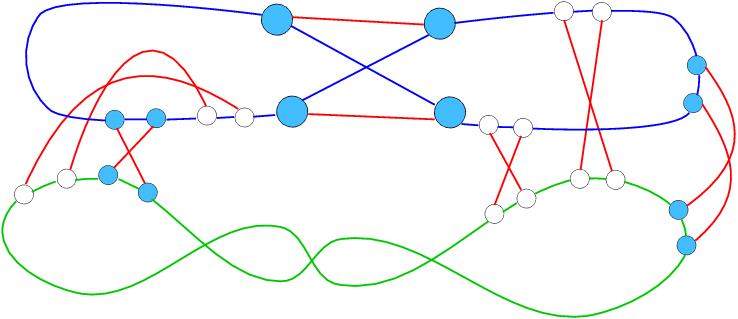}
			     \caption{Type A self-intersection with label inversion and cycle merge giving rise to new Type A self-intersections.}
			     \label{fig:type_B_type_A_conversion_colored}
			\end{figure}
			We want to put the emphasis on the light blue cycle in Figure \ref{fig:type_B_type_A_conversion_connected}, which increases the overall number of cycles and is necessary for the reduction of all self-intersections present in the diagram. Indeed, adding additional joining reduced cliques between $\gamma$ and $\gamma'$ boils down to the same case as presented in Figure \ref{fig:type_B_type_A_conversion_colored}, because all relative pairwise positions of reduced cliques are covered by Figure \ref{fig:type_B_type_A_conversion_colored}. 
			\begin{figure}[H]
			     \centering
				 \includegraphics[width=0.45\textwidth]{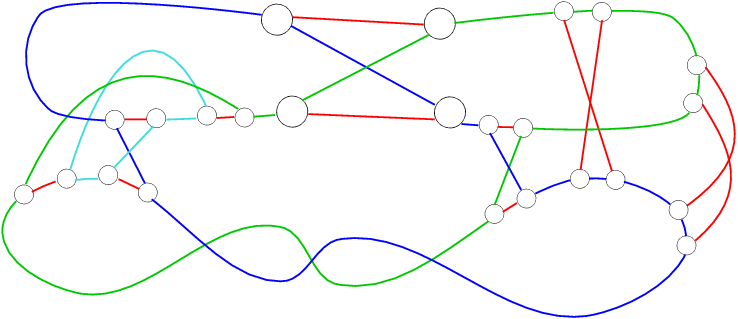}
			     \caption{Type A self-intersection with label inversion and cycle merge giving rise to new Type A self-intersections.}
			     \label{fig:type_B_type_A_conversion_connected}
			\end{figure}
			Therefore, based on the arguments in Subsections \ref{sub:conn_cycles_GLambda} and \ref{sub:typeB_generic_cycle}, we can always avoid creating Type A self-intersections from Type B self-intersections in any case discussed in Subsection \ref{subsub:reduction_Type_A_self_int}. Consequently, putting together Subsections \ref{subsub:reduction_Type_A_self_int} to \ref{subsub:monotonicity_type_a_deletion}, we can conclude under the assumptions of Proposition \ref{prop:self_ints_everywhere_imply_bridge}, that there is for any initial labeled graph-subgraph structure $(\mathfrak{L}_2(G),\mathcal{X},\Lambda)$ a finite sequence $\mathbb{F}=(\mathbb{F}_{l})$ of vectors of label inversions 
			\begin{equation*}
				\mathbb{F}_l = \mathcal{F}_{X_{i_n}}\circ\hdots\circ \mathcal{F}_{X_{i_1}}
			\end{equation*} 
			such that the number of Type A self-intersections in $(\mathfrak{L}_2(G),\mathcal{X},\mathbb{F}_l(\Lambda))$ is strictly monotonously decreasing in $l$, integer-valued and non-negative. This implies, that we find on any bridgeless graph a valid labeling $\Lambda$ such that $(\mathfrak{L}_2(G),\mathcal{X},\Lambda)$ has no Type A self-intersections. \par
		This concludes the proof of  Proposition \ref{prop:self_ints_everywhere_imply_bridge} allowing us to tackle proof of the Cycle Double Cover conjecture.
\section{Proof of Theorem \ref{thm:cycle_double_cover}}\label{sec:proof_main_thm}
	The proof of Theorem \ref{thm:cycle_double_cover} is fully based on the results concerning the reduced order two line graph $\mathfrak{L}_2(G)$  with an appropriate binary open-close labeling of the edges. The proof then works as presented in Figure \ref{fig:projection_diagram}.
	\begin{figure}[H]
		\centering
		\begin{tikzcd}
			G  \arrow[r, "\mathcal{L}"] & \mathcal{L}(G) \arrow[r, "\mathcal{L}"] \arrow[d] & \mathcal{L}(\mathcal{L}(G)) \arrow[r] & (\mathfrak{L}_2(G),\mathcal{X},\Lambda) \arrow[d, "\mathbb{T}_{\Lambda}"]  \\
			& \mathcal{L}(G) \arrow[lu, bend left, "\pi"] & & \Gamma_{\Lambda,\mathrm{decomp}} \arrow[ll, "\chi_{\mathcal{E}}"] \\\
		\end{tikzcd}
		\caption{The proof setup with objects and construction of $(\mathfrak{L}_2(G),\mathcal{X},\Lambda)$ (Definition \ref{def:open_closed_edges_in_reduced_double_line}), $\mathbb{T}_{\Lambda}$ set of label inversions (Definition \ref{def:label_inversions}) encapsulating the construction from Section \ref{sec:construction_sec_order_line_graph_red}.}
		\label{fig:projection_diagram}
	\end{figure}	 
	\begin{proof}[Proof of Theorem \ref{thm:cycle_double_cover}]
		Under the assumption that $G$ is bridgeless, we obtain from Proposition \ref{prop:self_ints_everywhere_imply_bridge} that there is a set of labelings $\Lambda^{\ast}$ of $\mathfrak{L}_2(G)$ such that the resulting cycles $\Gamma_{\Lambda^{\ast}}$ do not contain any Type A intersections. Consequently, all intersections are of Type B and therefore reducible. By Lemma \ref{lem:intersections_joining_cycles_after_flips} reducing the Type B intersections gives a monotone process until no more intersections are present and we call the resulting set of intersection-free cycles $\Gamma_{\Lambda,\mathrm{decomp}}$ and the orientation simply $\Lambda$. Applying $\chi_{\mathcal{E}}$ to $\Gamma_{\Lambda,\mathrm{decomp}}$, we obtain a coloring of $\mathcal{L}(G)$ such that exactly two edges incident to a vertex $q\in  \mathcal{L}(G)$ have the same color and both do not belong to the same triangle induced by a vertex in $G$. As depicted in Figure \ref{fig:projection_line_graph_colors_double_cover} we obtain a Cycle Double Cover of $G$ by projection of $\Lambda$ via a map $\pi$ which assigns any edge in $G$ the two colors of the edges which are incident to the correspond vertex in $\mathcal{L}(G)$ and call it $\mathcal{C}$.
		\begin{figure}[H]
			\centering
			\includegraphics{images/line_graph_coloring_to_cycle_cover}
			\caption{Projection from line graph coloring to cycle cover of $G$ satisfying the conditions of Theorem \ref{thm:cycle_double_cover}. The arrow indicates the mapping through $\pi$.}	
			\label{fig:projection_line_graph_colors_double_cover}
		\end{figure}
		As in the proof of Lemma \ref{lem:projection_without_intersection_results_in_cycles} if the set of colored edges $\mathcal{C}$ defined a closed walk, the inverse operation would imply that there is a vertex in $\mathcal{L}(G)$ such that all incident edges have the same color. This is impossible by construction of $\Lambda^{\ast}$ such that all edges in $G$ are traversed by exactly two cycles from $\pi(\Lambda)$.
	\end{proof}

\section{Outlook}
	The motivation for the whole construction used in the proof of Proposition \ref{prop:self_ints_everywhere_imply_bridge} comes from a connection between the notion of left hand paths and vertex orientations as discussed in \cite{Br04} combined with a understanding of cubic graphs with assigned vertex orientations as a spin system, since vertex orientations on cubic graphs are binary in their nature. One could, therefore, ask the question, whether a Cycle Double Cover of $G$ can be seen as the equilibrium of some spin system relative to some Hamiltonian and the whole process of removing Type A self-intersections could be understood as an equivalent to Glauber dynamics. As we have seen in Subsections \ref{subsub:reduction_Type_A_self_int} and \ref{subsub:monotonicity_type_a_deletion}, the necessary vectors of label inversions to reduce a Type A self-intersection are highly non-local in the sense that they not only impact some reduced clique and its neighbors but might be influencing reduced cliques very "far away" from the Type A self-intersection, which we want to reduce. The correct view is on the level of cycles in $\mathfrak{L}_2(G)$ and a configuration $\Gamma_{\Lambda}$ would be the set of cycles induced by valid labels $\Lambda$. This allows us to formulate the Hamiltonian as
	\begin{equation*}
		\mathcal{H}(\Gamma_{\Lambda}) = \sum_{\gamma_i,\gamma_j\in \Gamma_{\Lambda}}\Phi(\gamma_i,\gamma_j)
	\end{equation*}
	where $\Phi$ is some symmetric interaction. This interaction $\Phi$ should be constructed from the cases discussed in Subsection \ref{subsub:reduction_Type_A_self_int}. This results in a possibility of sampling Cycle Double Covers, which are computationally still not approachable since the construction from Subsection \ref{subsub:reduction_Type_A_self_int} is not algorithmically feasible in the sense, that it needs non-polynomial effort in the size of the graph.

\newpage 

\appendix
\section{Centered vertex-cuts in $\mathcal{L}(G)$ and their lifts}\label{app:centered_vertex_cut}
	The central technique for the proof of Proposition \ref{prop:self_ints_everywhere_imply_bridge} were centered vertex cuts, build around all possible Type A self-intersections. Now we need to discuss and prove the topological result, which underlined the proof. A $v$ centered vertex cut $\mathbb{V}_v$ of $\mathcal{L}(G)$ with a bridge-free underlying cubic-graph $G$ satisfies necessarily $|\mathbb{V}_v|\in\lbrace 2,3,4\rbrace$, since $\mathcal{L}(G)$ is $4$ regular and $|\mathbb{V}_v|=1$ would imply a bridge. 
		\begin{figure}[H]
	     \centering
	     \begin{subfigure}[H]{0.45\textwidth}
	         \centering
	         \includegraphics[width=\textwidth]{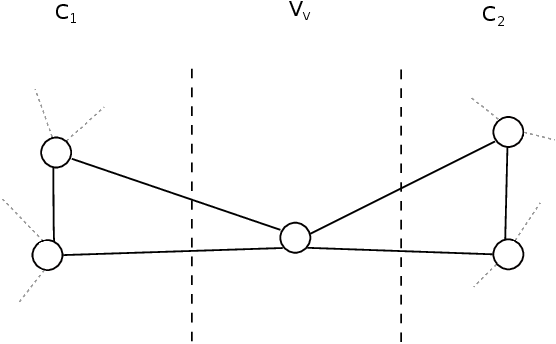}
	         \caption{A single vertex in the vertex cut with all its neighbors in one connected component or the other.}
	     \end{subfigure}
	     \hfill
	     \begin{subfigure}[H]{0.36\textwidth}
	         \centering
	         \includegraphics[width=\textwidth]{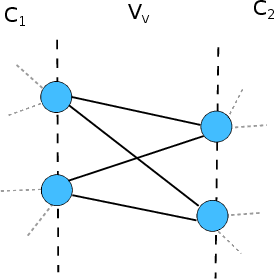}
	         \caption{Lift of a balanced vertex.}
	         \label{fig:balanced_cut_vertex_lifted}
	     \end{subfigure}
	     \caption{Cut vertex in $\mathbb{V}_v$ belonging to two triangles each in one connected component and its lift.}
	\end{figure}
	We will consider in what follows all three cases. In each case there is one local structure which is added and the remaining cases are combinatorial combinations of previous cases. We call these combinatorial combinations the direct sum of any number of lower order cases.\par
	\textit{Case 1 with $|\mathbb{V}_v|=2$:} 
		\begin{figure}[H]
	     \centering
	     \begin{subfigure}[H]{0.28\textwidth}
	         \centering
	         \includegraphics[width=\textwidth]{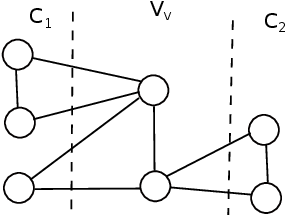}
	         \caption{The two vertices in $\mathbb{V}_v$ being neighbors.}
	         \label{fig:dim_two_vertex_cut}
	     \end{subfigure}
	     \hfill
	     \begin{subfigure}[H]{0.28\textwidth}
	         \centering
	         \includegraphics[width=\textwidth]{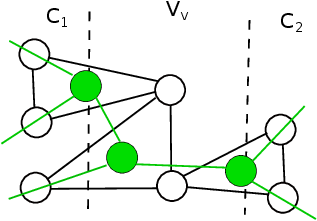}
	         \caption{This configuration in the line graph $\mathcal{L}(G)$ implies a bridge in $G$ for $|\mathbb{V}_v|=2$.}
	         \label{fig:dim_two_vertex_cut_implies_bridge}
	     \end{subfigure}
	     \hfill
	     \begin{subfigure}[H]{0.28\textwidth}
	         \centering
	         \includegraphics[width=\textwidth]{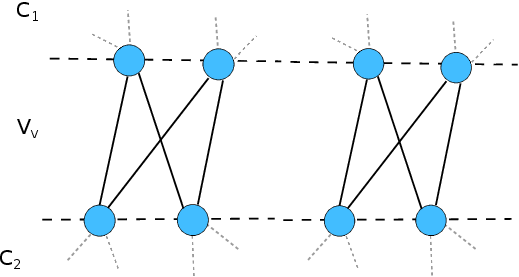}
	         \caption{Direct sum of lift of two balanced vertices.}
	         \label{fig:balanced_cut_vertex_lifted_direct_sum}
	     \end{subfigure}
	     \caption{All possible situations for a cut vertex in $\mathbb{V}_v$ in $\mathcal{L}(G)$. A full triangle cannot lie within $\mathbb{V}_v$.}
		\label{fig:dim_two_vertex_cut_bridge}
	\end{figure} 
	From Figure \ref{fig:dim_two_vertex_cut_bridge} we can see, that certain configurations for the centered vertex cut $\mathbb{V}_v$ are outside of conditions of the theorem, which we try to prove. In particular, whenever only the neighbors of one single vertex in $\mathbb{V}_v$ lie in $C_1$ or $C_2$, this vertex becomes a cut-vertex for $\mathcal{L}(G)$, as can be seen in Figure \ref{fig:dim_two_vertex_cut_implies_bridge}. Since $G$ is assumed to be bridgeless, this cannot happen. But, nonetheless, the configuration may occur as a building block for higher order cases of $|\mathbb{V}_v|=3$. They can then be resolved as the direct sum of two balanced vertices as shown in Figure \ref{fig:balanced_cut_vertex_lifted_direct_sum}.
	\par
		\textit{Case 2 with $|\mathbb{V}_v|=3$:}
		\begin{figure}[H]
	     \centering
	     \hspace*{35pt}
	     \begin{subfigure}[H]{0.36\textwidth}
	         \centering
	         \includegraphics[width=\textwidth]{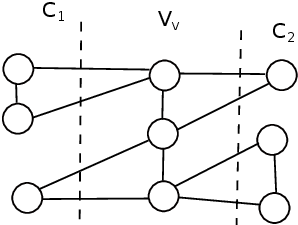}
	         \caption{A balanced vertex in $\mathbb{V}_v$ with all its neighbors in the connected components.}
	         \label{fig:dim_three_vertex_cut}
	     \end{subfigure}
	     \hfill
	     \begin{subfigure}[H]{0.28\textwidth}
	         \centering
	         \includegraphics[width=\textwidth]{images/dim_three_vertex_cut_lifted}
	         \caption{A semi-balanced vertex in $\mathbb{V}_v$ with one neighbor also in $\mathbb{V}_v$.}
	         \label{fig:dim_three_vertex_cut_lifted}
	     \end{subfigure}
		\hspace*{35pt}
	     \caption{All possible situations for a cut vertex in $\mathbb{V}_v$ in $\mathcal{L}(G)$. A full triangle cannot lie within $\mathbb{V}_v$.}
		\label{fig:dim_three_vertex_cut_cases}
	\end{figure} 
	From Figure \ref{fig:dim_three_vertex_cut_cases} we obtain further building blocks for the proof of Proposition \ref{prop:self_ints_everywhere_imply_bridge}. 
		\par
		\textit{Case 3 with $|\mathbb{V}_v|=4$:} In the case that all three preceding cases do not apply to the $v$ centered vertex cut $\mathbb{V}_v$, we simply chose a neighbor $w_v$ of $v$ and $\mathbb{V}_v=\mathcal{N}_{w_v}$. This leads to a an "empty" lift of the connected components $\lbrace w_v\rbrace$. Nonetheless, this suffices to make the conclusions about resolutions of Type A self-intersections.
		\begin{figure}[H]
	     \centering
	     \begin{subfigure}[H]{0.36\textwidth}
	         \centering
			\hspace*{35pt}
	         \includegraphics[width=\textwidth]{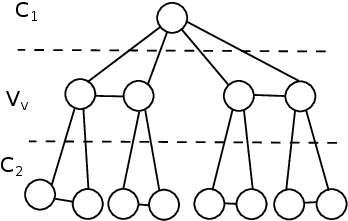}
	         \caption{A balanced vertex in $\mathbb{V}_v$ with all its neighbors in the connected components.}
	         \label{fig:balanced_cut_vertex_situation}
	     \end{subfigure}
	     \hfill
	     \begin{subfigure}[H]{0.36\textwidth}
	         \centering
	         \includegraphics[width=\textwidth]{images/dim_four_vertex_cut_lifted}
	         \caption{A semi-balanced vertex in $\mathbb{V}_v$ with one neighbor also in $\mathbb{V}_v$.}
	         \label{fig:semibalanced_cut_vertex_situation}
	     \end{subfigure}
		\hspace*{35pt}
	     \caption{All possible situations for a cut vertex in $\mathbb{V}_v$ in $\mathcal{L}(G)$. A full triangle cannot lie within $\mathbb{V}_v$.}
		\label{fig:balanced_unbalanced_cut_vertex_cases}
	\end{figure} 
	\par
	Note that at most two neighbors of each $v'\in\mathbb{V}_v$ can belong to any connected component and the remaining ones have to be part of the vertex cut $\mathbb{V}_v$. Otherwise, due to the triangular structure of $\mathcal{L}(G)$, an edge would connect some $C_i$ and $C_j$, which is a contradiction to $\mathbb{V}_v$ being a vertex cut. Furthermore, no triangle in $\mathcal{L}(G)$ belongs completely to $\mathbb{V}_v$ due to the minimality condition. If we considered a vertex cut, which contains a triangle, then we can move one of the corners of the triangle to one of its adjacent connected components, i.e., in which it has a neighbor. Remember that this corner cannot have neighbors in two different connected components. This reduces the size of the vertex cut by $1$ while still being a vertex cut. This is a contradiction to minimality.\par
	Finally, since $G$ is bridgeless, between any two $C_i,C_j$ with $i\neq j$ there are at least two vertices $v',w'\in\mathbb{V}_v$ adjacent to vertices in $C_i$ and $C_j$. We only consider the connected components adjacent to $v$ of which there are exactly two and they are uniquely defined by our choice of $v$. We may, hence, assume without loss of generality, that there are exactly two connected components after deletion of any $v$-centered vertex cut $\mathbb{V}_v$. All types of cut vertices can be lifted to $(\mathfrak{L}_2(G),\mathcal{X})$ and take then the form of reduced cliques.
\section{A construction using half edges}\label{app:half_edge_alternative}
	Even though the construction of $\mathfrak{L}_2(G)$ seems natural from a perspective of line graphs and decoupling cycles by edge properties, it requires an abstraction from the underlying graph $G$, which might seem far fetched. It turns out, that it is possible to introduce all necessary structure on the level of $G$ by replacing its neighborhood properties by employing an extension of half-edges. To this end, consider the graph $G=(V,E)$ as in the conditions of Theorem \ref{thm:cycle_double_cover}. We replace the edge set $E$ by the following structure. For each $v\in V$, consider three pairs of half-edges $(e_{v}^{(1,1)},e_{v}^{(1,2)})$, $(e_{v}^{(2,1)},e_{v}^{(2,2)})$ and $(e_{v}^{(3,1)},e_{v}^{(3,2)})$ with source $v$. Each pair is represented in Figure \ref{fig:half_edge_pairs_no_or} by a line with identical end decorations. Each pair will belong to the same walk or cycle in a double cover. If $\langle v,w\rangle \in E$ for some $v,w\in V$ then, from four pairs of half edges $(e_{v}^{(i_1,1)},e_{v}^{(i_1,2)})$, $(e_{v}^{(i_2,1)},e_{v}^{(i_2,2)})$ and $(e_{w}^{(j_1,1)},e_{w}^{(j_1,2)})$, $(e_{v}^{(j_2,1)},e_{v}^{(j_2,2)})$ we construct a crossing consisting of four connections, as represented in Figure \ref{fig:half_edge_pairs_no_or}. 
	\begin{figure}[H]
		\centering
         \includegraphics[width=0.7\textwidth]{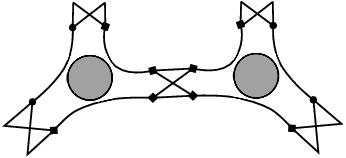}
         \caption{Half-edge pairs for double walk covers and crossings between vertices, connecting different walks.}
         \label{fig:half_edge_pairs_no_or}
     \end{figure}
     Note that contraction of all half-edge pairs around the same vertex, represented in Figure \ref{fig:half_edge_pairs_no_or} by identical end points, leads to exactly the same structure $\mathfrak{L}_2(G)$ as was used throughout this article. Additionally, the set of reduced cliques $\mathcal{X}$ corresponds to the crossings between walks. Consequently, we can again introduce labels on the crossings for open-closed pairs, making both views equivalent. It was the authors choice to go with the line graph approach due to being more versed in this perspective on graphs. Nonetheless, it turns out that for a generalization of the proof to the oriented double cycle cover conjecture, the half-edge construction would, indeed, be more fruitful. Using the notion of left hand paths and vertex orientations as discussed in \cite{Br04} one can go from Figure \ref{fig:half_edge_pairs_no_or} to Figure  \ref{fig:half_edge_pairs_w_or} by introducing vertex orientations in form of the given arrows.
	\begin{figure}[H]
		\centering
         \includegraphics[width=0.7\textwidth]{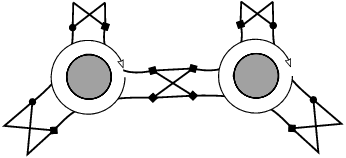}
         \caption{Introducing vertex orientations, indicated by arrows around each vertex, together with half-edge construction gives potential approach to oriented double cycle cover.}
         \label{fig:half_edge_pairs_w_or}
     \end{figure}
     Now, there is an interplay between the orientations on the vertices and the open-closed pairs in the crossings between pairs. Changing one implies the automatically changes on the other and the author has not yet managed to find a way to resolve this dependence. It seems, nonetheless, like a good starting point for the search for oriented double cycle covers and if it is helpful to someone else in answering that question, the author would be extremely happy about it.

\section*{Statements and Declarations}
This work was done independently after having finished my PhD in June 2022. \\
The author has no relevant financial or non-financial interests to disclose. The article was exclusively written by the mentioned author.

\printbibliography

@article{Sze73,
  title={Polyhedral decompositions of cubic graphs},
  author={Szekeres, George},
  journal={Bulletin of the Australian Mathematical Society},
  volume={8},
  number={3},
  pages={367--387},
  year={1973},
  publisher={Cambridge University Press}
}

@article{Sey80,
  title={Disjoint paths in graphs},
  author={Seymour, Paul D},
  journal={Discrete mathematics},
  volume={29},
  number={3},
  pages={293--309},
  year={1980},
  publisher={Elsevier}
}

@book{Har72,
  title={Graph Theory},
  author={Harary, Robert},
  year={1972},
  publisher={Addison Wesley, Massachusetts}
}

@article{Br04,
  title={Random construction of Riemann surfaces},
  author={Brooks, Robert and Makover, Eran},
  journal={Journal of Differential Geometry},
  volume={68},
  number={1},
  pages={121--157},
  year={2004},
  publisher={Lehigh University}
}

@article{Char68,
 ISSN = {00029947},
 URL = {http://www.jstor.org/stable/1994875},
 author = {Gary Chartrand},
 journal = {Transactions of the American Mathematical Society},
 number = {3},
 pages = {559--566},
 publisher = {American Mathematical Society},
 title = {On Hamiltonian Line-Graphs},
 urldate = {2024-11-29},
 volume = {134},
 year = {1968}
}

@article{Whi32,
 ISSN = {00029327, 10806377},
 URL = {http://www.jstor.org/stable/2371086},
 author = {Hassler Whitney},
 journal = {American Journal of Mathematics},
 number = {1},
 pages = {150--168},
 publisher = {The Johns Hopkins University Press},
 title = {Congruent Graphs and the Connectivity of Graphs},
 urldate = {2024-11-29},
 volume = {54},
 year = {1932}
}

@article{Lehot74,
author = {Lehot, Philippe G. H.},
title = {An Optimal Algorithm to Detect a Line Graph and Output Its Root Graph},
year = {1974},
issue_date = {Oct. 1974},
publisher = {Association for Computing Machinery},
address = {New York, NY, USA},
volume = {21},
number = {4},
issn = {0004-5411},
url = {https://doi.org/10.1145/321850.321853},
doi = {10.1145/321850.321853},
abstract = {Given a graph H with E edges and N nodes, a graph G is sought such that H is the line graph of G, if G exists. The algorithm does this within the order of E steps, in fact in E + O(N) steps. This algorithm is optimal in its complexity.},
journal = {J. ACM},
month = oct,
pages = {569–575},
numpages = {7}
}

@article{Rous73,
title = {A max {m,n} algorithm for determining the graph H from its line graph G},
journal = {Information Processing Letters},
volume = {2},
number = {4},
pages = {108-112},
year = {1973},
issn = {0020-0190},
doi = {https://doi.org/10.1016/0020-0190(73)90029-X},
url = {https://www.sciencedirect.com/science/article/pii/002001907390029X},
author = {Nicholas D. Roussopoulos},
keywords = {graph, maximal subgroup, complete graph, complete bipartite, automorphism}
}

@article{RAM05,
title = {Spectra and energies of iterated line graphs of regular graphs},
journal = {Applied Mathematics Letters},
volume = {18},
number = {6},
pages = {679-682},
year = {2005},
note = {Special issue on the occasion of MEGA 2003},
issn = {0893-9659},
doi = {https://doi.org/10.1016/j.aml.2004.04.012},
url = {https://www.sciencedirect.com/science/article/pii/S0893965904003799},
author = {H.S. Ramane and H.B. Walikar and S.B. Rao and B.D. Acharya and P.R. Hampiholi and S.R. Jog and I. Gutman},
keywords = {Spectral graph theory, Line graphs, Iterated line graphs, Regular graphs, Eigenvalues (of graphs), Energy (of graphs)}
}

\end{document}